\providecommand{\U}[1]{\protect\rule{.1in}{.1in}}
\newtheorem{theorem}{Theorem}[section]
\newtheorem{corollary}[theorem]{Corollary}
\newtheorem{assumption}[theorem]{Assumption}
\newtheorem{example}[theorem]{Example}
\newtheorem{lemma}[theorem]{Lemma}
\newtheorem{remark}[theorem]{Remark}
\newenvironment{proof}[1][Proof]{\noindent\textbf{#1.} }{\ \rule{0.5em}{0.5em}}
\numberwithin{equation}{section}
\begin{document}

\title{A global stochastic maximum principle for fully coupled forward-backward
stochastic systems }
\author{Mingshang Hu\thanks{Zhongtai Securities Institute for Financial Studies,
Shandong University, Jinan, Shandong 250100, PR China. humingshang@sdu.edu.cn.
Research supported by NSF (No. 11671231) and Young Scholars Program of
Shandong University (No. 2016WLJH10). }
\and Shaolin Ji\thanks{Zhongtai Securities Institute for Financial Studies,
Shandong University, Jinan, Shandong 250100, PR China. jsl@sdu.edu.cn
(Corresponding author). Research supported by NSF No. 11571203.}
\and Xiaole Xue\thanks{Zhongtai Securities Institute for Financial Studies,
Shandong University, Jinan 250100, China. Email: xiaolexue1989@gmail.com,
xuexiaole.good@163.com. Research supported by NSF (No. 11801315) and Natural
Science Foundation of Shandong Province(ZR2018QA001).} }
\maketitle

\textbf{Abstract}. We study a stochastic optimal control problem for fully
coupled forward-backward stochastic control systems with a nonempty control
domain. For our problem, the first-order and second-order variational
equations are fully coupled linear FBSDEs. Inspired by Hu \cite{Hu17}, we
develop a new decoupling approach by introducing an adjoint equation which is
a quadratic BSDE. By revealing the relations among the terms of the
first-order Taylor's expansions, we estimate the orders of them and derive a
global stochastic maximum principle which includes a completely new term.
Applications to stochastic linear quadratic control problems are investigated.

{\textbf{Key words}. } Backward stochastic differential equations, Nonconvex
control domain, Stochastic recursive optimal control, Maximum principle, Spike variation.

\textbf{AMS subject classifications.} 93E20, 60H10, 35K15

\addcontentsline{toc}{section}{\hspace*{1.8em}Abstract}

\section{Introduction}

It is well known that deriving maximum principles, namely, necessary
conditions for optimality, is an important approach in solving optimal control
problems (see \cite{YongZhou} and the references therein).
Boltyanski-Gamkrelidze-Pontryagin \cite{B-G-P56} announced the Pontryagin's
maximum principle for the first time for deterministic control systems in
1956. They introduced the spike variation and studied the first-order term in
a sort of Taylor's expansion with respect to this perturbation. But for
stochastic control systems, if the diffusion terms depend on the controls,
then one can't follow this idea for deterministic control systems. The reason
is that the It\^{o} integral $%
{\displaystyle\int\nolimits_{t}^{t+\varepsilon}}
\sigma(s)dB(s)$ is only of order $\sqrt{\varepsilon}$ which leads to the
first-order expansion method failed. To overcome this difficulty, Peng
\cite{Peng90} first introduced the second-order term in the Taylor expansion
of the variation and obtained the global maximum principle for the classical
stochastic optimal control problem. Since then, many researchers investigate
this kind of optimal control problems for various stochastic systems (see
\cite{YingHu006,YingHu001,Tang003,Tang004,Zhou002}).

Peng \cite{Peng93} generalized the classical stochastic optimal control
problem to one where the cost functional is defined by $Y(0)$. Here
$(Y(\cdot),Z(\cdot))$ is the solution of the following backward stochastic
differential equation (BSDE) (\ref{intro--bsde0}):%
\begin{equation}
\left\{
\begin{array}
[c]{rl}%
-dY(t)= & f(t,X(t),Y(t),Z(t),u(t))dt-Z(t)dB(t),\\
Y(T)= & \phi(X(T)).
\end{array}
\right.  \label{intro--bsde0}%
\end{equation}
Since El Karoui et al. \cite{ElKaetal97} defined a more general class of
stochastic recursive utilities in economic theory by solutions of BSDEs, this
new kind of stochastic optimal control problem is called the stochastic
recursive optimal control problem. When the control domain is convex, one can
avoid spike variation method and deduce a so-called local stochastic maximum
principle. Peng \cite{Peng93} first established a local stochastic maximum
principle for the classical stochastic recursive optimal control problem. The
local stochastic maximum principles for other various problems were studied in
(Dokuchaev and Zhou \cite{Dokuchaev-Zhou}, Ji and Zhou \cite{Ji-Zhou}, Peng
\cite{Peng93}, Shi and Wu \cite{Shi-Wu}, Xu \cite{Xu95}, Zhou \cite{Zhou003},
see also the references therein). But when the control domain is nonconvex,
one encounters an essential difficulty when trying to derive the first-order
and second-order expansions for the BSDE (\ref{intro--bsde0}) and it is
proposed as an open problem in Peng \cite{Peng99}. Recently, Hu \cite{Hu17}
studied this open problem and obtained a completely novel global maximum
principle. In \cite{Hu17}, Hu found that there are closely relations among the
terms of the first-order Taylor's expansions, i.e.,%
\begin{equation}%
\begin{array}
[c]{l}%
Y_{1}\left(  t\right)  =p\left(  t\right)  X_{1}\left(  t\right)  ,\\
Z_{1}(t)=p(t)\delta\sigma(t)I_{E_{\epsilon}}(t)+[\sigma_{x}(t)p(t)+q(t)]X_{1}%
\left(  t\right)  ,
\end{array}
\label{intro-relation-hu}%
\end{equation}
where $(p\left(  \cdot\right)  ,q(\cdot))$ is the solution of the adjoint
equation. And the BSDE satisfied by $(p\left(  \cdot\right)  ,q(\cdot))$
possesses a linear generator. Notice that the variation of $Z(t)$ includes the
term $\langle p(t),\delta\sigma(t)\rangle I_{E_{\varepsilon}}(t)$. Hu
\cite{Hu17} proposed to do Taylor's expansions at $\bar{Z}(t)+p(t)\delta
\sigma(t)I_{E_{\epsilon}}(t)$ and deduced the maximum principle.

Motivated by the leader-follower stochastic differential games and other
problems in mathematical finance, Yong \cite{Yong10} studied a fully coupled
controlled FBSDE with mixed initial-terminal conditions. In \cite{Yong10},
Yong regarded $Z(\cdot)$ as a control process and then applied the Ekeland
variational principle to obtain an optimality variational principle which
contains unknown parameters. Note that using the similar approach, Wu
\cite{Wu13} studied a stochastic recursive optimal control problem. In this
paper, we study the following stochastic optimal control problem: minimize the
cost functional
\[
J(u(\cdot))=Y(0)
\]
subject to the following fully coupled forward-backward stochastic
differential equation (FBSDE) (see \cite{YingHu002, Ma-WZZ, Ma-Yong-FBSDE,
Ma-ZZ, Zhang17} and the references therein):
\begin{equation}
\left\{
\begin{array}
[c]{rl}%
dX(t)= & b(t,X(t),Y(t),Z(t),u(t))dt+\sigma(t,X(t),Y(t),Z(t),u(t))dB(t),\\
dY(t)= & -g(t,X(t),Y(t),Z(t),u(t))dt+Z(t)dB(t),\\
X(0)= & x_{0},\ Y(T)=\phi(X(T)),
\end{array}
\right.  \label{intro--fbsde}%
\end{equation}
where the control variable $u(\cdot)$ takes values in a nonempty subset of
$\mathbb{R}^{k}$. In fact, our model is a special one in Yong \cite{Yong10}.
But our object is to get rid of the unknown parameters in the optimality
variational principle in \cite{Yong10,Wu13} and obtain a global stochastic
maximum principle for the above fully coupled control system. In order to do
this, we should study the variational equations of the BSDE in
(\ref{intro--fbsde}). But as pointed out in \cite{Yong10}, the
regularity/integrability of process $Z(\cdot)$ seems to be not enough in the
case when a second order expansion is necessary. Fortunately, inspired by Hu
\cite{Hu17}, we overcome this difficulty based on the following two findings.
The first one is although the first-order and second-order variational
equations are fully coupled linear FBSDEs, we can decouple them by
establishing the relations among the first-order Taylor's expansions, i.e.,
\begin{equation}%
\begin{array}
[c]{l}%
Y_{1}\left(  t\right)  =p\left(  t\right)  X_{1}\left(  t\right)  ,\\
Z_{1}(t)=\Delta(t)I_{E_{\epsilon}}(t)+K_{1}(t)X_{1}\left(  t\right)  ,
\end{array}
\label{intro--relation}%
\end{equation}
where $\Delta(t)$ satisfies the following algebra equation
\begin{equation}
\Delta(t)=p(t)(\sigma(t,\bar{X}(t),\bar{Y}(t),\bar{Z}(t)+\Delta
(t),u(t))-\sigma(t,\bar{X}(t),\bar{Y}(t),\bar{Z}(t),\bar{u}(t)))
\label{intro--algebra}%
\end{equation}
and $(p\left(  \cdot\right)  ,q(\cdot))$ is the adjoint process which
satisfies a quadratic BSDE. By the results of Lepeltier and San Martin
\cite{LS02}, we obtain the existence of solution to this nonlinear adjoint
equation. Utilizing the uniqueness result of the linear fully coupled FBSDE in
the appendix, we also prove the uniqueness of solution to this adjoint
equation. The second finding is that the first-order variation $Z_{1}(t)$ has
a unique decomposition by the relations (\ref{intro--relation}). This point
inspires us that we should do Taylor's expansions at $\bar{Z}(t)+\Delta
(t)I_{E_{\epsilon}}(t)$. The advantage of this approach is that the reminder
term of Taylor's expansions $K_{1}(t)X_{1}\left(  t\right)  $ has good
estimate which avoids the difficulty to do estimates such as $E[%
{\displaystyle\int\nolimits_{0}^{T}}
\mid Z(t)\mid^{2+\varepsilon}dt]<\infty$, for some $\varepsilon>0$. For this
reason, the obtained maximum principle will include a new term $\Delta(t)$
which is determined uniquely by $u(t)$, $\bar{u}(t)$, and the optimal state
$(\bar{X}(t)$, $\bar{Y}(t)$, $\bar{Z}(t))$. The readers may refer to
subsection \ref{heuristic} for a heuristic derivation.

By assuming $q(\cdot)$ is a bounded process, we derive the first-order and
second-order variational equations and deduce a global maximum principle which
includes a new term $\Delta(t)$. Furthermore, we study the case in which
$q(\cdot)$ may be unbounded. But for this case, we only obtain the maximum
principle when $\sigma(t,x,y,z,u)$ is linear in $z$, i.e.,
\[
\sigma(t,x,y,z,u)=A(t)z+\sigma_{1}(t,x,y,u).
\]
Finally, applications to stochastic linear quadratic control problems are investigated.

The rest of the paper is organized as follows. In section 2, we give the
preliminaries and formulation of our problem. A global stochastic maximum
principle is obtained by spike variation method in section 3. Especially, to
illustrate our main approach, we give a heuristic derivation in subsection
\ref{heuristic} before we prove the maximum principle strictly. In section 4,
a linear quadratic control problem is investigated based on the obtained
estimates in section 3. In appendix, we give some results that will be used in
our proofs.

\section{ Preliminaries and problem formulation}

Let $(\Omega,\mathcal{F},P)$ be a complete probability space on which a
standard $d$-dimensional Brownian motion $B=(B_{1}(t),B_{2}(t),...B_{d}%
(t))_{0\leq t\leq T}^{\intercal}$ is defined. Assume that $\mathbb{F=}%
\{\mathcal{F}_{t},0\leq t\leq T\}$ is the $P$-augmentation of the natural
filtration of $B$, where $\mathcal{F}_{0}$ contains all $P$-null sets of
$\mathcal{F}$. Denote by $\mathbb{R}^{n}$ the $n$-dimensional real Euclidean
space and $\mathbb{R}^{k\times n}$ the set of $k\times n$ real matrices. Let
$\langle\cdot,\cdot\rangle$ (resp. $\left\vert \cdot\right\vert $) denote the
usual scalar product (resp. usual norm) of $\mathbb{R}^{n}$ and $\mathbb{R}%
^{k\times n}$. The scalar product (resp. norm) of $M=(m_{ij})$, $N=(n_{ij}%
)\in\mathbb{R}^{k\times n}$ is denoted by $\langle M,N\rangle
=tr\{MN^{\intercal}\}$ (resp.$\Vert M\Vert=\sqrt{MM^{\intercal}}$), where the
superscript $^{\intercal}$ denotes the transpose of vectors or matrices.

We introduce the following spaces.

$L_{\mathcal{F}_{T}}^{p}(\Omega;\mathbb{R}^{n})$ : the space of $\mathcal{F}%
_{T}$-measurable $\mathbb{R}^{n}$-valued random variables $\eta$ such that
\[
||\eta||_{p}:=(\mathbb{E}[|\eta|^{p}])^{\frac{1}{p}}<\infty,
\]

$L_{\mathcal{F}_{T}}^{\infty}(\Omega;\mathbb{R}^{n})$: the space of
$\mathcal{F}_{T}$-measurable $\mathbb{R}^{n}$-valued random variables $\eta$
such that $||\eta||_{\infty}:=\underset{\omega\in\Omega}{\mathrm{ess~sup}%
}\left\Vert \eta\right\Vert <\infty$,

$L_{\mathcal{F}}^{p}([0,T];\mathbb{R}^{n})$: the space of $\mathbb{F}$-adapted
and $p$-th integrable stochastic processes on $[0,T]$ such that
\[
\mathbb{E}\left[  \int_{0}^{T}\left\vert f(t)\right\vert ^{p}dt\right]
<\infty,
\]

$L_{\mathcal{F}}^{\infty}(0,T;\mathbb{R}^{n})$: the space of $\mathbb{F}%
$-adapted and uniformly bounded stochastic processes on $[0,T]$ such that
\[
||f(\cdot)||_{\infty}=\underset{(t,\omega)\in\lbrack0,T]\times\Omega
}{\mathrm{ess~sup}}|f(t)|<\infty,
\]

$L_{\mathcal{F}}^{p,q}([0,T];\mathbb{R}^{n})$: the space of $\mathbb{F}%
$-adapted stochastic processes on $[0,T]$ such that
\[
||f(\cdot)||_{p,q}=\left\{  \mathbb{E}\left[  \left(  \int_{0}^{T}%
|f(t)|^{p}dt\right)  ^{\frac{q}{p}}\right]  \right\}  ^{\frac{1}{q}}<\infty,
\]

$L_{\mathcal{F}}^{p}(\Omega;C([0,T],\mathbb{R}^{n}))$: the space of
$\mathbb{F}$-adapted continuous stochastic processes on $[0,T]$ such that
\[
\mathbb{E}\left[  \sup\limits_{0\leq t\leq T}\left\vert f(t)\right\vert
^{p}\right]  <\infty.
\]

\subsection{$L^{p}$ estimate for fully coupled FBSDEs}

We first give an $L^{p}$-estimate for the following fully coupled
forward-backward stochastic differential equation:
\begin{equation}
\left\{
\begin{array}
[c]{rl}%
dX(t)= & b(t,X(t),Y(t),Z(t))dt+\sigma(t,X(t),Y(t),Z(t))dB(t),\\
dY(t)= & -g(t,X(t),Y(t),Z(t))dt+Z(t)dB(t),\\
X(0)= & x_{0},\ Y(T)=\phi(X(T)),
\end{array}
\right.  \label{fbsde}%
\end{equation}
where
\[
b:\Omega\times\lbrack0,T]\times\mathbb{R}^{n}\times\mathbb{R}^{m}%
\times\mathbb{R}^{m\times d}\rightarrow\mathbb{R}^{n},
\]%
\[
\sigma:\Omega\times\lbrack0,T]\times\mathbb{R}^{n}\times\mathbb{R}^{m}%
\times\mathbb{R}^{m\times d}\rightarrow\mathbb{R}^{n\times d},
\]%
\[
g:\Omega\times\lbrack0,T]\times\mathbb{R}^{n}\times\mathbb{R}^{m}%
\times\mathbb{R}^{m\times d}\rightarrow\mathbb{R}^{m},
\]%
\[
\phi:\Omega\times\mathbb{R}^{n}\rightarrow\mathbb{R}^{m}.
\]
A solution to (\ref{fbsde}) is a triplet of $\mathbb{F}$-adapted process
$\Theta(\cdot):=(X(\cdot),Y(\cdot),Z(\cdot))$. We impose the following assumption.

\begin{assumption}
\label{assum-1}(i) $\psi=b,\sigma,g,\phi$ are uniformly Lipschitz continuous
with respect to $x,y,z$, that is, there exist constants $L_{i}>0$, $i=1,2,3$
such that%
\[%
\begin{array}
[c]{rl}%
|b(t,x_{1},y_{1},z_{1})-b(t,x_{2},y_{2},z_{2})| & \leq L_{1}|x_{1}%
-x_{2}|+L_{2}(|y_{1}-y_{2}|+|z_{1}-z_{2}|),\\
|\sigma(t,x_{1},y_{1},z_{1})-\sigma(t,x_{2},y_{2},z_{2})| & \leq L_{1}%
|x_{1}-x_{2}|+L_{2}|y_{1}-y_{2}|+L_{3}|z_{1}-z_{2}|,\\
|g(t,x_{1},y_{1},z_{1})-g(t,x_{2},y_{2},z_{2}) & \leq L_{1}(|x_{1}%
-x_{2}|+|y_{1}-y_{2}|+|z_{1}-z_{2}|),\\
|\phi(t,x_{1})-\phi(t,x_{2})| & \leq L_{1}|x_{1}-x_{2}|,
\end{array}
\]
for all $t,\omega,x_{i},y_{i},z_{i}$, $i=1,2$. \newline(ii) For a given $p>1$,
$\phi(0)\in L_{\mathcal{F}_{T}}^{p}(\Omega;\mathbb{R}^{m})$, $b(\cdot
,0,0,0)\in L_{\mathcal{F}}^{1,p}([0,T];\mathbb{R}^{n})$, $g(\cdot,0,0,0)\in
L_{\mathcal{F}}^{1,p}([0,T];\mathbb{R}^{m})$, $\sigma(\cdot,0,0,0)\in
L_{\mathcal{F}}^{2,p}([0,T];\mathbb{R}^{n\times d})$.
\end{assumption}

For $p>1$, set
\begin{equation}
\Lambda_{p}:=C_{p}2^{p+1}(1+T^{p})c_{1}^{p}, \label{def-Lambda}%
\end{equation}
where $c_{1}=\max\{L_{2},L_{3}\},$ $C_{p}$ is defined in Lemma \ref{sde-bsde}
in appendix.

\begin{theorem}
Suppose Assumption \ref{assum-1} holds and\ $\Lambda_{p}<1$ for some $p>1$.
$\ $Then \eqref{fbsde} admits a unique solution $(X\left(  \cdot\right)
,Y\left(  \cdot\right)  ,Z\left(  \cdot\right)  )\in L_{\mathcal{F}}%
^{p}(\Omega;C([0,T],\mathbb{R}^{n}))\times L_{\mathcal{F}}^{p}(\Omega
;C([0,T],\mathbb{R}^{m}))\times L_{\mathcal{F}}^{2,p}([0,T];\mathbb{R}%
^{m\times d})$ and
\[%
\begin{array}
[c]{l}%
||(X,Y,Z)||_{p}^{p}=\mathbb{E}\left\{  \sup\limits_{t\in\lbrack0,T]}\left[
|X(t)|^{p}+|Y(t)|^{p}\right]  +\left(  \int_{0}^{T}|Z(t)|^{2}dt\right)
^{\frac{p}{2}}\right\} \\
\ \leq C\mathbb{E}\left\{  \left(  \int_{0}^{T}[|b|+|g|](t,0,0,0)dt\right)
^{p}+\left(  \int_{0}^{T}|\sigma(t,0,0,0)|^{2}dt\right)  ^{\frac{p}{2}}%
+|\phi(0)|^{p}+|x_{0}|^{p}\right\}  ,
\end{array}
\]
where $C$ depends on $T$, $p$, $L_{1}$, $c_{1}$. \label{est-fbsde-lp}
\end{theorem}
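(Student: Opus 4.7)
The plan is to construct the solution by Picard iteration on the backward component. Let
\[
\mathcal{B}_p := L^p_{\mathcal{F}}(\Omega;C([0,T];\mathbb{R}^m)) \times L^{2,p}_{\mathcal{F}}([0,T];\mathbb{R}^{m\times d}),
\]
equipped with the norm $\|(y,z)\|_{\mathcal{B}_p}^p := \mathbb{E}[\sup_{t}|y(t)|^p] + \mathbb{E}[(\int_0^T |z(t)|^2\,dt)^{p/2}]$. For each $(y,z)\in \mathcal{B}_p$ I would first solve the decoupled forward SDE
\[
dX(t) = b(t,X(t),y(t),z(t))\,dt + \sigma(t,X(t),y(t),z(t))\,dB(t), \quad X(0)=x_0,
\]
which admits a unique $X \in L^p_{\mathcal{F}}(\Omega;C([0,T];\mathbb{R}^n))$ by the SDE half of Lemma \ref{sde-bsde}. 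With this $X$ frozen, I would then solve the BSDE $dY(t) = -g(t,X(t),Y(t),Z(t))\,dt + Z(t)\,dB(t)$ with $Y(T)=\phi(X(T))$, obtaining a unique $(Y,Z) \in \mathcal{B}_p$ by the BSDE half of the same lemma. This defines a map $\Psi:\mathcal{B}_p \to \mathcal{B}_p$, $\Psi(y,z) := (Y,Z)$, whose fixed points correspond precisely to the $(Y,Z)$-components of solutions of \eqref{fbsde}.

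The heart of the proof is to show $\Psi$ is a strict contraction. For two inputs $(y,z),(y',z')$ with images $(X,Y,Z),(X',Y',Z')$ and increments $\hat X := X-X'$, $\hat y := y-y'$, etc., the SDE estimate of Lemma \ref{sde-bsde} applied to $\hat X$ together with the Lipschitz bounds of Assumption \ref{assum-1}(i) — $L_2$ on $\hat y,\hat z$ for the drift and $L_2,L_3$ on $\hat y,\hat z$ for the diffusion — yields, after the elementary bound $(a+b)^p \leq 2^{p-1}(a^p+b^p)$ and Hölder on the $dt$-integral, and after absorbing the self-coupling $L_1|\hat X|$ into $C_p$ via Gronwall,
\[
\mathbb{E}\Big[\sup_{0\leq t\leq T}|\hat X(t)|^p\Big] \leq C_p\, 2^{p+1}(1+T^p)\, c_1^p \,\|(\hat y,\hat z)\|_{\mathcal{B}_p}^p = \Lambda_p \,\|(\hat y,\hat z)\|_{\mathcal{B}_p}^p.
\]
The BSDE estimate of Lemma \ref{sde-bsde} applied to $(\hat Y,\hat Z)$ — whose driver is $L_1$-Lipschitz in $(y,z)$ and whose terminal datum $\phi(X(T))-\phi(X'(T))$ and generator increment depend only on $\hat X$ — controls $\|(\hat Y,\hat Z)\|_{\mathcal{B}_p}^p$ by $\mathbb{E}[\sup_t|\hat X(t)|^p]$ up to a constant $C(T,p,L_1)$ that gets absorbed into $C_p$. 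Composing the two estimates yields the contraction $\|\Psi(y,z)-\Psi(y',z')\|_{\mathcal{B}_p}^p \leq \Lambda_p\,\|(y-y',z-z')\|_{\mathcal{B}_p}^p$, and since $\Lambda_p<1$, Banach's fixed-point theorem gives a unique $(Y,Z)$, hence a unique solution triple $(X,Y,Z)$ of \eqref{fbsde}.

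For the a priori $L^p$ bound, one final application of Lemma \ref{sde-bsde} to the solution itself gives, on the one hand, $\mathbb{E}\sup_t|X|^p$ controlled by $|x_0|^p$ plus the $L^p$ norms of $b(\cdot,0,0,0),\sigma(\cdot,0,0,0)$ plus $c_1^p\|(Y,Z)\|_{\mathcal{B}_p}^p$ (after using the Lipschitz bound on $b(\cdot,0,Y,Z),\sigma(\cdot,0,Y,Z)$), and on the other hand $\|(Y,Z)\|_{\mathcal{B}_p}^p$ controlled by $|\phi(0)|^p$ plus the $L^p$ norm of $g(\cdot,0,0,0)$ plus a multiple of $\mathbb{E}\sup_t|X|^p$. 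The resulting linear system in the two unknowns $\mathbb{E}\sup_t|X|^p$ and $\|(Y,Z)\|_{\mathcal{B}_p}^p$ is solvable precisely because $\Lambda_p<1$, producing the claimed estimate with $C=C(T,p,L_1,c_1)$. The main obstacle is purely bookkeeping: one must verify that the $L_1$-dependent self-couplings in both the SDE ($L_1|\hat X|$) and the BSDE ($L_1(|\hat Y|+|\hat Z|)$) are absorbed into $C_p$ by Gronwall rather than appearing multiplicatively, and that the $(a+b)^p$ and Hölder manipulations produce exactly the factor $2^{p+1}(1+T^p)c_1^p$ matching \eqref{def-Lambda}, so that the contraction constant is precisely $\Lambda_p$ as required and no larger constant inflates the coupling term $c_1$ above its definition as $\max(L_2,L_3)$.
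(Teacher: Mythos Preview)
Your approach is essentially the paper's: define the Picard map on the backward pair $(y,z)$ by solving the decoupled SDE then the BSDE, and show it is a contraction with factor $\Lambda_p$. Two points of difference are worth noting.

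For the contraction, the paper applies Lemma~\ref{sde-bsde} \emph{once} to the full decoupled difference system $(\Delta X,\Delta Y,\Delta Z)$, with $\Delta y,\Delta z$ entering only through the drift and diffusion data of the forward part. This yields directly
\[
\mathbb{E}\Big[\sup_t\big(|\Delta X|^p+|\Delta Y|^p\big)+\Big(\int_0^T|\Delta Z|^2\,dt\Big)^{p/2}\Big]\le C_p\,\mathbb{E}\big[\cdots\big]\le \Lambda_p\,\|(\Delta y,\Delta z)\|^p,
\]
with the single constant $C_p$ from the lemma. Your two-step decomposition (SDE estimate for $\hat X$, then BSDE estimate for $(\hat Y,\hat Z)$ in terms of $\hat X$) produces a product of two constants, and the claim that the BSDE constant ``gets absorbed into $C_p$'' is exactly the identification you would need to justify: the lemma's $C_p$ is for the combined cascade, so the one-shot application avoids this bookkeeping entirely.

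For the a~priori bound, the paper does not solve a coupled linear system. Instead it compares the fixed point $(Y,Z)$ with the image $(Y^0,Z^0)=\Gamma(0,0)$: the contraction gives $\|(Y-Y^0,Z-Z^0)\|\le \Lambda_p^{1/p}\|(Y,Z)\|$, hence by the triangle inequality $\|(Y,Z)\|\le (1-\Lambda_p^{1/p})^{-1}\|(Y^0,Z^0)\|$, and $\|(Y^0,Z^0)\|$ is bounded by the data via Lemma~\ref{sde-bsde}. This sidesteps your concern about whether the off-diagonal coefficients of the linear system have product below $1$. Your route would also close, but only after verifying that the product of the SDE-to-$(Y,Z)$ and BSDE-from-$X$ coupling constants equals $\Lambda_p$---the same identification issue as above.
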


\begin{proof}
Without loss of generality, we only prove the case $n=m=d=1$.

Let $\mathcal{L}$ denote the space of all $\mathbb{F}$-adapted processes
$(Y(\cdot),Z(\cdot))$ such that
\[
\mathbb{E}\left[  \sup\limits_{0\leq t\leq T}|Y(t)|^{p}+\left(  \int_{0}%
^{T}|Z(t)|^{2}dt\right)  ^{\frac{p}{2}}\right]  <\infty.
\]
For each given $(y,z)\in\mathcal{L}$, consider the following FBSDE:
\begin{equation}
\left\{
\begin{array}
[c]{rl}%
dX(t)= & b(t,X(t),y(t),z(t))dt+\sigma(t,X(t),y(t),z(t))dB(t),\\
dY(t)= & -g(t,X(t),Y(t),Z(t))dt+Z(t)dB(t),\\
X(0)= & x_{0},\ Y(T)=\phi(X(T)).
\end{array}
\right.  \label{fbsde-y0}%
\end{equation}
Under Assumption \ref{assum-1}, it is easy to deduce that the solution
$(Y(\cdot),Z(\cdot))$ of \eqref{fbsde-y0} belongs to $\mathcal{L}$. Denote the
operator $(y(\cdot),z(\cdot))\rightarrow(Y(\cdot),Z(\cdot))$ by $\Gamma$. For
two elements $(y^{i},z^{i})\in\mathcal{L}$, $i=1,2$, let $(X^{i}(\cdot
),Y^{i}(\cdot),Z^{i}(\cdot))$ be the corresponding solution to \eqref{fbsde-y0}.

Set
\[
\Delta y=y^{1}-y^{2},\text{ }\Delta z=z^{1}-z^{2},\text{ }\Delta X=X^{1}%
-X^{2},\text{ }\Delta Y=Y^{1}-Y^{2},\text{ }\Delta Z=Z^{1}-Z^{2}.
\]
Then
\begin{equation}
\left\{
\begin{array}
[c]{rl}%
d\Delta X(t)= & \left[  \alpha_{1}(t)\Delta X(t)+\beta_{1}(t)\Delta
y(t)+\gamma_{1}(t)\Delta z(t)\right]  dt+\left[  \alpha_{2}(t)\Delta
X(t)+\beta_{2}(t)\Delta y(t)+\gamma_{2}(t)\Delta z(t)\right]  dB(t),\\
d\Delta Y(t)= & -\left[  \alpha_{3}(t)\Delta X(t)+\beta_{3}(t)\Delta
Y(t)+\gamma_{3}(t)\Delta Z(t)\right]  dt+\Delta Z(t)dB(t),\\
\Delta X(0)= & 0,\ \Delta Y(T)=\lambda(T)\Delta X(T),
\end{array}
\right.
\end{equation}
where
\[
\alpha_{1}(t)=\left\{
\begin{array}
[c]{ll}%
\frac{b(t,X^{1}(t),y^{1}(t),z^{1}(t))-b(t,X^{2}(t),y^{1}(t),z^{1}(t))}{\Delta
X(t)},\  & \text{if}\ \Delta X(t)\neq0,\\
0, & \text{if}\ \Delta X(t)=0,
\end{array}
\right.
\]
and $\alpha_{i}(t)$, $\beta_{i}(t)$, $\gamma_{i}(t)$, $\lambda(T)$ are defined
similarly. Furthermore, $\alpha_{i}(t)$, $\beta_{i}(t)$, $\gamma_{i}(t)$,
$\lambda(T)$ are bounded by Lipschitz constants of the corresponding
coefficients. Especially, $|\beta_{1}(t)|,|\gamma_{1}(t)|,|\beta
_{2}(t)|,|\gamma_{2}(t)|\leq c_{1}$. Due to Lemma \ref{sde-bsde}, we obtain
\begin{equation}%
\begin{array}
[c]{l}%
\mathbb{E}\left[  \sup\limits_{0\leq t\leq T}\left(  |\Delta X(t)|^{p}+|\Delta
Y(t)|^{p}\right)  +\left(  \int_{0}^{T}|\Delta Z(t)|^{2}dt\right)  ^{\frac
{p}{2}}\right] \\
\leq C_{p}\mathbb{E}\left\{  \left[  \int_{0}^{T}(|\beta_{1}(t)||\Delta
y(t)|+|\gamma_{1}(t)||\Delta z(t)|)dt\right]  ^{p}+\left[  \int_{0}^{T}\left(
|\beta_{2}(t)|^{2}|\Delta y(t)|^{2}+|\gamma_{2}(t)|^{2}|\Delta z(t)|^{2}%
\right)  dt\right]  ^{\frac{p}{2}}\right\} \\
\leq C_{p}2^{p+1}\left(  1+T^{p}\right)  c_{1}^{p}\mathbb{E}\left[
\sup\limits_{0\leq t\leq T}|\Delta y(t)|^{p}+\left(  \int_{0}^{T}|\Delta
z(t)|^{2}dt\right)  ^{\frac{p}{2}}\right] \\
=\Lambda_{p}\mathbb{E}\left[  \sup\limits_{0\leq t\leq T}|\Delta
y(t)|^{p}+\left(  \int_{0}^{T}|\Delta z(t)|^{2}dt\right)  ^{\frac{p}{2}%
}\right]  .
\end{array}
\label{fbsde-delty}%
\end{equation}
Since $\Lambda_{p}<1$, the operator $\Gamma$ is a contraction mapping and has
a unique fixed point $(Y(\cdot),Z(\cdot))$.\ Let $X(\cdot)$ be the solution of
\eqref{fbsde} with respect to the fixed point $(Y(\cdot),Z(\cdot))$. Thus,
$(X(\cdot),Y(\cdot),Z(\cdot))$ is the unique solution to \eqref{fbsde}.

Let $\Theta^{0}:=(X^{0}(\cdot),Y^{0}(\cdot),Z^{0}(\cdot))$ be the solution to
\eqref{fbsde-y0} with $y=0$, $z=0$. From \eqref{fbsde-delty},
\[
||(Y-Y^{0},Z-Z^{0})||\leq\Lambda_{p}^{\frac{1}{p}}||(Y-0,Z-0)||=\Lambda
_{p}^{\frac{1}{p}}||(Y,Z)||.
\]
By triangle inequality,
\[%
\begin{array}
[c]{rl}%
||(Y,Z)||\leq & ||(Y-Y^{0},Z-Z^{0})||+||(Y^{0},Z^{0})||\\
\leq & \Lambda_{p}^{\frac{1}{p}}||(Y,Z)||+||(Y^{0},Z^{0})||,
\end{array}
\]
which leads to
\[
||(Y,Z)||\leq\left(  1-\Lambda_{p}^{\frac{1}{p}}\right)  ^{-1}||(Y^{0}%
,Z^{0})||.
\]
By Lemma \ref{sde-bsde} in appendix, we obtain
\[%
\begin{array}
[c]{rl}%
||(Y^{0},Z^{0})||^{p}\leq & C_{p}\mathbb{E}\left[  |\phi(0)|^{p}+|x_{0}%
|^{p}+\left(  \int_{0}^{T}[|b|+|g|](t,0,0,0)dt\right)  ^{p}+\left(  \int%
_{0}^{T}|\sigma(t,0,0,0)|^{2}dt\right)  ^{\frac{p}{2}}\right]  ,
\end{array}
\]
where $C_{p}\ $depends on $T$, $p$, $L_{1}$. Thus we have%
\[
||(Y,Z)||_{p}^{p}\leq C^{\prime}\mathbb{E}\left[  |\phi(0)|^{p}+|x_{0}%
|^{p}+\left(  \int_{0}^{T}[|b|+|g|](t,0,0,0)dt\right)  ^{p}+\left(  \int%
_{0}^{T}|\sigma(t,0,0,0)|^{2}dt\right)  ^{\frac{p}{2}}\right]  ,
\]
where$\ C^{\prime}=C_{p}(1-\Lambda_{p}^{\frac{1}{p}})^{-p}$. By Lemma
\ref{sde-bsde}, we can obtain the desired result.
\end{proof}

\begin{remark}
In the case $p=2$, Pardoux and Tang obtained the $L^{2}$-estimate in
\cite{Pardoux-Tang} (see also \cite{Cvi-Zhang}). Instead of assuming that
$L_{2}$ and $L_{3}$ are small enough as in \cite{Cvi-Zhang}, we assume
$\Lambda_{p}<1$ in this paper. There are other conditions in \cite{Cvi-Zhang}
which can guarantee the existence and uniqueness of \eqref{fbsde}. The readers
may apply the method introduced in the above theorem to obtain the $L^{p}%
$-estimate of \eqref{fbsde} for these conditions similarly.
\end{remark}

\subsection{Problem formulation}

Consider the following fully coupled stochastic control system:
\begin{equation}
\left\{
\begin{array}
[c]{rl}%
dX(t)= & b(t,X(t),Y(t),Z(t),u(t))dt+\sigma(t,X(t),Y(t),Z(t),u(t))dB(t),\\
dY(t)= & -g(t,X(t),Y(t),Z(t),u(t))dt+Z(t)dB(t),\\
X(0)= & x_{0},\ Y(T)=\phi(X(T)),
\end{array}
\right.  \label{state-eq}%
\end{equation}
where%
\[
b:[0,T]\times\mathbb{R}\times\mathbb{R}\times\mathbb{R}^{d}\times
U\rightarrow\mathbb{R},
\]%
\[
\sigma:[0,T]\times\mathbb{R}\times\mathbb{R}\times\mathbb{R}^{d}\times
U\rightarrow\mathbb{R}^{d},
\]%
\[
g:[0,T]\times\mathbb{R}\times\mathbb{R}\times\mathbb{R}^{d}\times
U\rightarrow\mathbb{R},
\]%
\[
\phi:\mathbb{R}\rightarrow\mathbb{R}.
\]

An admissible control $u(\cdot)$ is an $\mathbb{F}$-adapted process with
values in $U$ such that%
\[
\sup\limits_{0\leq t\leq T}\mathbb{E}[|u(t)|^{8}]<\infty,
\]
where the control domain $U$ is a nonempty subset of $\mathbb{R}^{k}$. Denote
the admissible control set by $\mathcal{U}[0,T]$.

Our optimal control problem is to minimize the cost functional
\[
J(u(\cdot))=Y(0)
\]
over $\mathcal{U}[0,T]$:%
\begin{equation}
\underset{u(\cdot)\in\mathcal{U}[0,T]}{\inf}J(u(\cdot)). \label{obje-eq}%
\end{equation}

\section{Stochastic maximum principle}

We derive maximum principle (necessary condition for optimality) for the
optimization problem (\ref{obje-eq}) in this section. For simplicity of
presentation, we only study the case $d=1$, and then present the results for
the general case in subsection \ref{sec-general}. In this section, the
constant $C$ will change from line to line in our proof.

We impose the following assumptions on the coefficients of \eqref{state-eq}.

\begin{assumption}
For $\psi=b,$ $\sigma,$ $g$ and $\phi$, we suppose

(i) $\psi$, $\psi_{x}$, $\psi_{y}$, $\psi_{z}$ are continuous in $(x,y,z,u)$;
$\psi_{x}$, $\psi_{y}$, $\psi_{z}$ are bounded; there exists a constant
$\ L>0$ such that%
\[%
\begin{array}
[c]{rl}%
|\psi(t,x,y,z,u)| & \leq L\left(  1+|x|+|y|+|z|+|u|\right)  ,\\
|\sigma(t,0,0,z,u)-\sigma(t,0,0,z,u^{\prime})| & \leq L(1+|u|+|u^{\prime}|).
\end{array}
\]

(ii) For any $2\leq\beta\leq8$,\ $\Lambda_{\beta}:=C_{\beta}2^{\beta
+1}(1+T^{\beta})c_{1}^{\beta}<1$, where $c_{1}=\max\{L_{2},L_{3}\}$,
$L_{2}=\max\{||b_{y}||_{\infty},||b_{z}||_{\infty},||\sigma_{y}||_{\infty}\}$,
$L_{3}=||\sigma_{z}||_{\infty}$, $C_{\beta}$ is defined in Lemma
\ref{sde-bsde} in appendix for $L_{1}=\max\{||b_{x}||_{\infty},||\sigma
_{x}||_{\infty},||g_{x}||_{\infty},||g_{y}||_{\infty},||g_{z}||_{\infty
},||\phi_{x}||_{\infty}\}$.

(iii) $\psi_{xx}$, $\psi_{xy}$, $\psi_{yy}$ , $\psi_{xz}$, $\psi_{yz}$,
$\psi_{zz}$ are continuous in $(x,y,z,u)$; $\psi_{xx}$, $\psi_{xy}$,
$\psi_{yy}$, $\psi_{xz}$, $\psi_{yz}$ ,$\psi_{zz}$ are bounded.

\label{assum-2}
\end{assumption}

Under Assumption \ref{assum-2}(i)-(ii), for any $u(\cdot)\in\mathcal{U}[0,T]$,
the state equation \eqref{state-eq} has a unique solution by Theorem
\ref{est-fbsde-lp}.

Let $\bar{u}(\cdot)$ be optimal and $(\bar{X}(\cdot),\bar{Y}(\cdot),\bar
{Z}(\cdot))$ be the corresponding state processes of (\ref{state-eq}). Since
the control domain is not necessarily convex, we resort to spike variation
method. For any $u(\cdot)\in\mathcal{U}[0,T]$ and $0<\epsilon<T$, define
\[
u^{\epsilon}(t)=\left\{
\begin{array}
[c]{lll}%
\bar{u}(t), & \ t\in\lbrack0,T]\backslash E_{\epsilon}, & \\
u(t), & \ t\in E_{\epsilon}, &
\end{array}
\right.
\]
where $E_{\epsilon}\subset\lbrack0,T]$ is\ a measurable set with
$|E_{\epsilon}|=\epsilon$. Let $(X^{\epsilon}(\cdot),Y^{\epsilon}%
(\cdot),Z^{\epsilon}(\cdot))$ be the state processes of (\ref{state-eq})
associated with $u^{\epsilon}(\cdot)$.

For simplicity, for $\psi=b$, $\sigma$, $g$, $\phi$ and $\kappa=x$, $y$, $z$,
denote%
\[%
\begin{array}
[c]{rl}%
\psi(t)= & \psi(t,\bar{X}(t),\bar{Y}(t),\bar{Z}(t),\bar{u}(t)),\\
\psi_{\kappa}(t)= & \psi_{\kappa}(t,\bar{X}(t),\bar{Y}(t),\bar{Z}(t),\bar
{u}(t)),\\
\delta\psi(t)= & \psi(t,\bar{X}(t),\bar{Y}(t),\bar{Z}(t),u(t))-\psi(t),\\
\delta\psi_{\kappa}(t)= & \psi_{\kappa}(t,\bar{X}(t),\bar{Y}(t),\bar
{Z}(t),u(t))-\psi_{\kappa}(t),\\
\delta\psi(t,\Delta)= & \psi(t,\bar{X}(t),\bar{Y}(t),\bar{Z}(t)+\Delta
(t),u(t))-\psi(t),\\
\delta\psi_{\kappa}(t,\Delta)= & \psi_{\kappa}(t,\bar{X}(t),\bar{Y}(t),\bar
{Z}(t)+\Delta(t),u(t))-\psi_{\kappa}(t),
\end{array}
\]
where $\Delta(\cdot)$ is an $\mathbb{F}$--adapted process. Moreover, denote
$D\psi$ is the gradient of $\psi$ with respect to $x$, $y$, $z$, and
$D^{2}\psi$ is the Hessian matrix of $\psi$ with respect to $x$, $y$, $z$,%
\begin{align*}
D\psi(t)  &  =D\psi(t,\bar{X}(t),\bar{Y}(t),\bar{Z}(t),\bar{u}(t)),\\
D^{2}\psi(t)  &  =D^{2}\psi(t,\bar{X}(t),\bar{Y}(t),\bar{Z}(t),\bar{u}(t)).
\end{align*}

\begin{lemma}
\label{est-epsilon-bar}Suppose Assumption \ref{assum-2}(i)-(ii) hold. Then for
any $2\leq\beta\leq8$ we have
\begin{equation}
\mathbb{E}\left[  \sup\limits_{t\in\lbrack0,T]}\left(  |X^{\epsilon}%
(t)-\bar{X}(t)|^{\beta}+|Y^{\epsilon}(t)-\bar{Y}(t)|^{\beta}\right)  \right]
+\mathbb{E}\left[  \left(  \int_{0}^{T}|Z^{\epsilon}(t)-\bar{Z}(t)|^{2}%
dt\right)  ^{\frac{\beta}{2}}\right]  =O\left(  \epsilon^{\frac{\beta}{2}%
}\right)  .
\end{equation}

\end{lemma}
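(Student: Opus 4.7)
The plan is to view $(\tilde X,\tilde Y,\tilde Z):=(X^\epsilon-\bar X,Y^\epsilon-\bar Y,Z^\epsilon-\bar Z)$ as the solution of a linear fully coupled FBSDE with spike source terms, and then apply Theorem~\ref{est-fbsde-lp} directly. For each $\psi\in\{b,\sigma,g\}$ I would split
\[
\psi(t,X^\epsilon,Y^\epsilon,Z^\epsilon,u^\epsilon)-\psi(t,\bar X,\bar Y,\bar Z,\bar u)=\bigl[\psi(t,X^\epsilon,Y^\epsilon,Z^\epsilon,u^\epsilon)-\psi(t,\bar X,\bar Y,\bar Z,u^\epsilon)\bigr]+\delta\psi(t)\,I_{E_\epsilon}(t),
\]
where the second bracket vanishes off $E_\epsilon$ (since $u^\epsilon\equiv\bar u$ there) and the first bracket is $\alpha_\psi(t)\tilde X(t)+\beta_\psi(t)\tilde Y(t)+\gamma_\psi(t)\tilde Z(t)$ by the integrated mean-value theorem in $(x,y,z)$, with coefficients bounded respectively by $\|\psi_x\|_\infty,\|\psi_y\|_\infty,\|\psi_z\|_\infty$. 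The terminal datum linearizes as $\tilde Y(T)=\lambda(T)\tilde X(T)$ with $|\lambda(T)|\le\|\phi_x\|_\infty$, and $\tilde X(0)=0$.

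Since the resulting Lipschitz constants coincide with $L_1,L_2,L_3$ of Assumption~\ref{assum-2}(ii), the contraction number $\Lambda_\beta$ is unchanged and $\Lambda_\beta<1$ holds for every $\beta\in[2,8]$. Applying Theorem~\ref{est-fbsde-lp} to the linear FBSDE for $(\tilde X,\tilde Y,\tilde Z)$, whose ``free'' data are $\delta b\,I_{E_\epsilon}$, $\delta\sigma\,I_{E_\epsilon}$, $\delta g\,I_{E_\epsilon}$ with zero initial and homogeneous terminal parts, yields
\[
\mathbb{E}\!\left[\sup_{0\le t\le T}(|\tilde X(t)|^\beta+|\tilde Y(t)|^\beta)+\Bigl(\int_0^T|\tilde Z(t)|^2\,dt\Bigr)^{\beta/2}\right]\le C\,\mathbb{E}\!\left[\Bigl(\int_{E_\epsilon}(|\delta b|+|\delta g|)\,dt\Bigr)^\beta+\Bigl(\int_{E_\epsilon}|\delta\sigma|^2\,dt\Bigr)^{\beta/2}\right].
\]

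What remains is to show that both spike integrals on the right are $O(\epsilon^{\beta/2})$. For the diffusion spike I would exploit the special $u$-uniform bound of Assumption~\ref{assum-2}(i) on $\sigma$, which together with Lipschitz continuity in $(x,y)$ gives $|\delta\sigma(t)|\le C(1+|\bar X(t)|+|\bar Y(t)|+|u(t)|+|\bar u(t)|)$, crucially \emph{independent of $\bar Z$}. H\"older's inequality then produces $(\int_{E_\epsilon}|\delta\sigma|^2\,dt)^{\beta/2}\le\epsilon^{\beta/2-1}\int_{E_\epsilon}|\delta\sigma|^\beta\,dt$, and taking expectation, using the $\beta$-th moment bounds on $\bar X,\bar Y$ (from Theorem~\ref{est-fbsde-lp} applied to $(\bar X,\bar Y,\bar Z)$) and the admissibility $\sup_t\mathbb{E}[|u(t)|^8]<\infty$, delivers $O(\epsilon^{\beta/2-1}\cdot\epsilon)=O(\epsilon^{\beta/2})$. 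The drift and generator spikes satisfy only $|\delta b|+|\delta g|\le C(1+|\bar X|+|\bar Y|+|\bar Z|+|u|+|\bar u|)$, and the pointwise $|\bar Z|$ term is the delicate point.

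Handling that $|\bar Z|$ contribution is the main obstacle: $\bar Z$ has no uniform pointwise moments, only the integrated bound $\mathbb{E}[(\int_0^T|\bar Z|^2\,dt)^{\beta/2}]<\infty$ from Theorem~\ref{est-fbsde-lp}. I would resolve it by Cauchy--Schwarz on the small set $E_\epsilon$: $\int_{E_\epsilon}|\bar Z|\,dt\le\epsilon^{1/2}(\int_0^T|\bar Z|^2\,dt)^{1/2}$, so that $\mathbb{E}[(\int_{E_\epsilon}|\bar Z|\,dt)^\beta]\le\epsilon^{\beta/2}\,\mathbb{E}[(\int_0^T|\bar Z|^2\,dt)^{\beta/2}]<\infty$. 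The bounded and $u$-dependent parts of the drift/generator spike contribute $O(\epsilon^\beta)$ by an analogous H\"older argument, which is of higher order in $\epsilon$. Summing all contributions yields the announced $O(\epsilon^{\beta/2})$ bound.
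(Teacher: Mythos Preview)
Your proof is correct and follows essentially the same route as the paper: write $(\tilde X,\tilde Y,\tilde Z)$ as the solution of a linear FBSDE with mean-value coefficients and spike source terms $\delta b\,I_{E_\epsilon}$, $\delta\sigma\,I_{E_\epsilon}$, $\delta g\,I_{E_\epsilon}$, apply Theorem~\ref{est-fbsde-lp}, then estimate the spike integrals exactly as you do---using the $\bar Z$-free bound on $|\delta\sigma|$ from Assumption~\ref{assum-2}(i) for the diffusion term and Cauchy--Schwarz $\int_{E_\epsilon}|\bar Z|\,dt\le\epsilon^{1/2}(\int_0^T|\bar Z|^2\,dt)^{1/2}$ for the drift/generator term. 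The paper's proof is line-for-line the same argument.
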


\begin{proof}
Let
\[%
\begin{array}
[c]{rl}%
\xi^{1,\epsilon}(t) & :=X^{\epsilon}(t)-\bar{X}(t);\\
\eta^{1,\epsilon}(t) & :=Y^{\epsilon}(t)-\bar{Y}(t);\\
\zeta^{1,\epsilon}(t) & :=Z^{\epsilon}(t)-\bar{Z}(t);\\
\Theta(t) & :=(\bar{X}(t),\bar{Y}(t),\bar{Z}(t));\\
\Theta^{\epsilon}(t) & :=(X^{\epsilon}(t),Y^{\epsilon}(t),Z^{\epsilon}(t)).
\end{array}
\]
We have
\begin{equation}
\left\{
\begin{array}
[c]{rl}%
d\xi^{1,\epsilon}(t)= & \left[  \tilde{b}_{x}^{\epsilon}(t)\xi^{1,\epsilon
}(t)+\tilde{b}_{y}^{\epsilon}(t)\eta^{1,\epsilon}(t)+\tilde{b}_{z}^{\epsilon
}(t)\zeta^{1,\epsilon}(t)+\delta b(t)I_{E_{\epsilon}}(t)\right]  dt\\
& +\left[  \tilde{\sigma}_{x}^{\epsilon}(t)\xi^{1,\epsilon}(t)+\tilde{\sigma
}_{y}^{\epsilon}(t)\eta^{1,\epsilon}(t)+\tilde{\sigma}_{z}^{\epsilon}%
(t)\zeta^{1,\epsilon}(t)+\delta\sigma(t)I_{E_{\epsilon}}(t)\right]  dB(t),\\
\xi^{1,\epsilon}(0)= & 0,
\end{array}
\right.  \label{ep-bar-x}%
\end{equation}%
\begin{equation}
\left\{
\begin{array}
[c]{rl}%
d\eta^{1,\epsilon}(t)= & -\left[  \tilde{g}_{x}^{\epsilon}(t)\xi^{1,\epsilon
}(t)+\tilde{g}_{y}^{\epsilon}(t)\eta^{1,\epsilon}(t)+\tilde{g}_{z}^{\epsilon
}(t)\zeta^{1,\epsilon}(t)+\delta g(t)I_{E_{\epsilon}}(t)\right]
dt+\zeta^{1,\epsilon}(t)dB(t),\\
\eta^{1,\epsilon}(T)= & \tilde{\phi}_{x}^{\epsilon}(T)\xi^{1,\epsilon}(T),
\end{array}
\right.  \label{ep-bar-y}%
\end{equation}
where
\[
\tilde{b}_{x}^{\epsilon}(t)=\int_{0}^{1}b_{x}(t,\Theta(t)+\theta
(\Theta^{\epsilon}(t)-\Theta(t)),u^{\epsilon}(t))d\theta
\]
and $\tilde{b}_{y}^{\epsilon}(t)$, $\tilde{b}_{z}^{\epsilon}(t)$,
$\tilde{\sigma}_{x}^{\epsilon}(t)$, $\tilde{\sigma}_{y}^{\epsilon}(t)$,
$\tilde{\sigma}_{z}^{\epsilon}(t),$ $\tilde{g}_{x}^{\epsilon}(t)$, $\tilde
{g}_{y}^{\epsilon}(t)$, $\tilde{g}_{z}^{\epsilon}(t)$ and $\tilde{\phi}%
_{x}^{\epsilon}(T)$ are defined similarly.

Noting that $\left(  \xi^{1,\epsilon}(t),\eta^{1,\epsilon}(t),\zeta
^{1,\epsilon}(t)\right)  $ is the solution to \eqref{ep-bar-x} and
\eqref{ep-bar-y}, and
\[
\mathbb{E}\left[  \left(  \int_{E_{\epsilon}}|u(t)|dt\right)  ^{\beta}\right]
\leq\epsilon^{\beta-1}\mathbb{E}\left[  \int_{E_{\epsilon}}|u(t)|^{\beta
}dt\right]  ,
\]
then, by Theorem \ref{est-fbsde-lp}, we get
\[%
\begin{array}
[c]{ll}
& \mathbb{E}\left[  \sup\limits_{t\in\lbrack0,T]}\left(  |\xi^{1,\epsilon
}(t)|^{\beta}+|\eta^{1,\epsilon}(t)|^{\beta}\right)  +\left(  \int_{0}%
^{T}|\zeta^{1,\epsilon}(t)|^{2}dt\right)  ^{\frac{\beta}{2}}\right] \\
& \ \ \leq C\mathbb{E}\left[  \left(  \int_{0}^{T}\left(  |\delta
b(t)|I_{E_{\epsilon}}(t)+|\delta g(t)|I_{E_{\epsilon}}(t)\right)  dt\right)
^{\beta}+\left(  \int_{0}^{T}|\delta\sigma(t)|^{2}I_{E_{\epsilon}%
}(t)dt\right)  ^{\frac{\beta}{2}}\right] \\
& \ \ \leq C\mathbb{E}\left[  \left(  \int_{E_{\epsilon}}(1+|\bar{X}%
(t)|+|\bar{Y}(t)|+|\bar{Z}(t)|+|u(t)|+|\bar{u}(t)|)dt\right)  ^{\beta}\right.
\\
& \text{ \ \ \ \ \ \ \ }\left.  +\left(  \int_{E_{\epsilon}}(1+|\bar
{X}(t)|^{2}+|\bar{Y}(t)|^{2}+|u(t)|^{2}+|\bar{u}(t)|^{2})dt\right)
^{\frac{\beta}{2}}\right] \\
& \ \ \leq C\left(  \epsilon^{\beta}+\epsilon^{\frac{\beta}{2}}\right)
\left(  1+\sup\limits_{t\in\lbrack0,T]}\mathbb{E}\left[  |\bar{X}(t)|^{\beta
}+|\bar{Y}(t)|^{\beta}+|u(t)|^{\beta}+|\bar{u}(t)|^{\beta}\right]  \right)
+C\epsilon^{\frac{\beta}{2}}\mathbb{E}\left[  \left(  \int_{0}^{T}|\bar
{Z}(t)|^{2}dt\right)  ^{\frac{\beta}{2}}\right] \\
& \ \ \leq C\epsilon^{\frac{\beta}{2}}.
\end{array}
\]

\end{proof}

\subsection{A heuristic derivation\label{heuristic}}

Before giving the strict proof of the stochastic maximum principle, we
illustrate how to obtain our results formally in this subsection.

By Lemma \ref{est-epsilon-bar}, we have $X^{\epsilon}(t)-\bar{X}(t)\sim
O(\sqrt{\epsilon})$, $Y^{\epsilon}(t)-\bar{Y}(t)\sim O(\sqrt{\epsilon})$ and
$Z^{\epsilon}(t)-\bar{Z}(t)\sim O(\sqrt{\epsilon})$. Suppose that
\begin{equation}%
\begin{array}
[c]{lll}%
X^{\epsilon}(t)-\bar{X}(t) & = & X_{1}(t)+X_{2}(t)+o(\epsilon),\\
Y^{\epsilon}(t)-\bar{Y}(t) & = & Y_{1}(t)+Y_{2}(t)+o(\epsilon),\\
Z^{\epsilon}(t)-\bar{Z}(t) & = & Z_{1}(t)+Z_{2}(t)+o(\epsilon),
\end{array}
\label{heur-1}%
\end{equation}
where $X_{1}(t)\sim O(\sqrt{\epsilon})$, $X_{2}(t)\sim O(\epsilon)$,
$Y_{1}(t)\sim O(\sqrt{\epsilon})$, $Y_{2}(t)\sim O(\epsilon)$, $Z_{1}(t)\sim
O(\sqrt{\epsilon})$ and $Z_{2}(t)\sim O(\epsilon)$.

It is well-known that the solution $Z$ of the FBSDE (\ref{state-eq}) is
closely related to the diffusion term $\sigma$ of the forward SDE of
(\ref{state-eq}). When we adopt the spike variation method and calculate the
variational equation of $X$, the diffusion term of the variational equation
should include the term $\delta\sigma(t)I_{E_{\epsilon}}(t)$. So we guess that
$Z_{1}(t)$ has the following form%
\begin{equation}
Z_{1}(t)=\Delta(t)I_{E_{\epsilon}}(t)+Z_{1}^{\prime}(t). \label{heur-2}%
\end{equation}
where $\Delta(t)$ is an $\mathbb{F}$--adapted process and $Z_{1}^{\prime}(t)$
has good estimates similarly as $X_{1}(t)$. But this form of $Z_{1}(t)$ leads
to great difficulties when we do Taylor's expansion of the coefficients $b,$
$\sigma$ and $g$ with respect to $Z$. Fortunately, we find that $\Delta(t)$
can be determined uniquely by $u(t)$, $\bar{u}(t)$, and the optimal state
$(\bar{X}(t)$, $\bar{Y}(t)$, $\bar{Z}(t))$. Note that in Hu \cite{Hu17},%
\begin{equation}
\Delta(t)=p(t)\left(  \sigma(t,\bar{X}(t),u(t))-\sigma(t,\bar{X}(t),\bar
{u}(t))\right)  \label{heur-hu}%
\end{equation}
where $p(t)$ is the adjoint process. Although $\Delta(t)$ appears in the
expansion of $Z^{\epsilon}(t)-\bar{Z}(t)$, by (\ref{heur-hu}) it is clearly
that $\Delta(t)$ includes the spike variation of control variables. In our
context, we will see lately that $\Delta(t)$ is determined by an algebra
equation (\ref{heur-7}). Thus, when we derive the variational equations, we
should keep the $\Delta(t)I_{E_{\epsilon}}(t)$ term unchanged and do Taylor's
expansions at $\bar{Z}(t)+\Delta(t)I_{E_{\epsilon}}(t)$. This idea is first
applied to a partially coupled FBSDE control system by Hu \cite{Hu17}.
Following this idea, we can derive the first-order and second-order
variational equations for our control system (\ref{state-eq}). The expansions
for $b$ and $\sigma$ are given as follows:%
\[%
\begin{array}
[c]{l}%
b(t,X^{\epsilon}(t),Y^{\epsilon}(t),Z^{\epsilon}(t),u^{\epsilon}(t))-b(t)\\
=b(t,\bar{X}(t)+X_{1}(t)+X_{2}(t),\bar{Y}(t)+Y_{1}(t)+Y_{2}(t),\bar
{Z}(t)+\Delta(t)I_{E_{\epsilon}}(t)+Z_{1}^{\prime}(t)+Z_{2}(t),u^{\epsilon
}(t))-b(t)+o(\epsilon)\\
=b_{x}(t)(X_{1}(t)+X_{2}(t))+b_{y}(t)(Y_{1}(t)+Y_{2}(t))+b_{z}(t)(Z_{1}%
^{\prime}(t)+Z_{2}(t))\\
\text{ }+\frac{1}{2}[X_{1}(t),Y_{1}(t),Z_{1}^{\prime}(t)]D^{2}b(t)[X_{1}%
(t),Y_{1}(t),Z_{1}^{\prime}(t)]^{\intercal}+\delta b(t,\Delta)I_{E_{\epsilon}%
}(t)+o(\epsilon),
\end{array}
\]%
\[%
\begin{array}
[c]{l}%
\sigma(t,X^{\epsilon}(t),Y^{\epsilon}(t),Z^{\epsilon}(t),u^{\epsilon
}(t))-\sigma(t)\\
=\sigma(t,\bar{X}(t)+X_{1}(t)+X_{2}(t),\bar{Y}(t)+Y_{1}(t)+Y_{2}(t),\bar
{Z}(t)+\Delta(t)I_{E_{\epsilon}}(t)+Z_{1}^{\prime}(t)+Z_{2}(t),u^{\epsilon
}(t))-\sigma(t)+o(\epsilon)\\
=\sigma_{x}(t)(X_{1}(t)+X_{2}(t))+\sigma_{y}(t)(Y_{1}(t)+Y_{2}(t))+\sigma
_{z}(t)(Z_{1}^{\prime}(t)+Z_{2}(t))\\
\text{ }+\delta\sigma_{x}(t,\Delta)X_{1}(t)I_{E_{\epsilon}}(t)+\delta
\sigma_{y}(t,\Delta)Y_{1}(t)I_{E_{\epsilon}}(t)+\delta\sigma_{z}%
(t,\Delta)Z_{1}^{\prime}(t)I_{E_{\epsilon}}(t)\\
\text{ }+\frac{1}{2}[X_{1}(t),Y_{1}(t),Z_{1}^{\prime}(t)]D^{2}\sigma
(t)[X_{1}(t),Y_{1}(t),Z_{1}^{\prime}(t)]^{\intercal}+\delta\sigma
(t,\Delta)I_{E_{\epsilon}}(t)+o(\epsilon).
\end{array}
\]
Note that
\[
\int_{0}^{T}\delta b_{x}(t,\Delta)X_{1}(t)I_{E_{\epsilon}}(t)dt\sim
o(\epsilon)\text{ and }\int_{0}^{T}\delta\sigma_{x}(t,\Delta)X_{1}%
(t)I_{E_{\epsilon}}(t)dB(t)\sim O(\epsilon).
\]
So we omit $\delta b_{x}(t,\Delta)X_{1}(t)I_{E_{\epsilon}}(t)$ in the
expansions of $b$ and keep $\delta\sigma_{x}(t,\Delta)X_{1}(t)I_{E_{\epsilon}%
}(t)$ in the expansions of $\sigma$. The expansions for $g$ and $\phi$ are
similar to the expansions for $b$. Then, we obtain the following variational
equations:%
\begin{equation}
\left\{
\begin{array}
[c]{rl}%
d(X_{1}(t)+X_{2}(t))= & \{b_{x}(t)(X_{1}(t)+X_{2}(t))+b_{y}(t)(Y_{1}%
(t)+Y_{2}(t))+b_{z}(t)(Z_{1}^{\prime}(t)+Z_{2}(t))\\
& +\frac{1}{2}[X_{1}(t),Y_{1}(t),Z_{1}^{\prime}(t)]D^{2}b(t)[X_{1}%
(t),Y_{1}(t),Z_{1}^{\prime}(t)]^{\intercal}+\delta b(t,\Delta)I_{E_{\epsilon}%
}(t)\}dt\\
& +\{\sigma_{x}(t)(X_{1}(t)+X_{2}(t))+\sigma_{y}(t)(Y_{1}(t)+Y_{2}%
(t))+\sigma_{z}(t)(Z_{1}^{\prime}(t)+Z_{2}(t))\\
& +\delta\sigma_{x}(t,\Delta)X_{1}(t)I_{E_{\epsilon}}(t)+\delta\sigma
_{y}(t,\Delta)Y_{1}(t)I_{E_{\epsilon}}(t)+\delta\sigma_{z}(t,\Delta
)Z_{1}^{\prime}(t)I_{E_{\epsilon}}(t)\\
& +\frac{1}{2}[X_{1}(t),Y_{1}(t),Z_{1}^{\prime}(t)]D^{2}\sigma(t)[X_{1}%
(t),Y_{1}(t),Z_{1}^{\prime}(t)]^{\intercal}+\delta\sigma(t,\Delta
)I_{E_{\epsilon}}(t)\}dB(t),\\
X_{1}(0)+X_{2}(0)= & 0,
\end{array}
\right.  \label{heur-4}%
\end{equation}%
\begin{equation}
\left\{
\begin{array}
[c]{rl}%
d(Y_{1}(t)+Y_{2}(t))= & -\{g_{x}(t)(X_{1}(t)+X_{2}(t))+g_{y}(t)(Y_{1}%
(t)+Y_{2}(t))+g_{z}(t)(Z_{1}^{\prime}(t)+Z_{2}(t))\\
& +\frac{1}{2}[X_{1}(t),Y_{1}(t),Z_{1}^{\prime}(t)]D^{2}g(t)[X_{1}%
(t),Y_{1}(t),Z_{1}^{\prime}(t)]^{\intercal}+\delta g(t,\Delta)I_{E_{\epsilon}%
}(t)\}dt\\
& +(Z_{1}(t)+Z_{2}(t))dB(t),\\
Y_{1}(T)+Y_{2}(T)= & \phi_{x}(\bar{X}(T))(X_{1}(T)+X_{2}(T))+\frac{1}{2}%
\phi_{xx}(\bar{X}(T))X_{1}^{2}(T).
\end{array}
\right.  \label{heur-4'}%
\end{equation}
Now, we need to derive the first-and second-order variational equations from
(\ref{heur-4}) and (\ref{heur-4'}). Firstly, it is easy to establish the
first-order variational equation for $X_{1}(t)$:%
\begin{equation}%
\begin{array}
[c]{rl}%
dX_{1}(t)= & \left[  b_{x}(t)X_{1}(t)+b_{y}(t)Y_{1}(t)+b_{z}(t)(Z_{1}%
(t)-\Delta(t)I_{E_{\epsilon}}(t))\right]  dt\\
& +\left[  \sigma_{x}(t)X_{1}(t)+\sigma_{y}(t)Y_{1}(t)+\sigma_{z}%
(t)(Z_{1}(t)-\Delta(t)I_{E_{\epsilon}}(t))+\delta\sigma(t,\Delta
)I_{E_{\epsilon}}(t)\right]  dB(t),\\
X_{1}(0)= & 0.
\end{array}
\label{heur-5'}%
\end{equation}
Notice that $Y_{1}(T)=\phi_{x}(\bar{X}(T))X_{1}(T)$. So we guess that
$Y_{1}\left(  t\right)  =p\left(  t\right)  X_{1}\left(  t\right)  $ where
$p\left(  t\right)  $ is the solution of the following adjoint equation
\[
\left\{
\begin{array}
[c]{rl}%
dp(t)= & -\Upsilon(t)dt+q(t)dB(t),\\
p(T)= & \phi_{x}(\bar{X}(T)),
\end{array}
\right.
\]
where $\Upsilon(t)$ is some adapted process which will be determined later. It
is clear that $Y_{1}\left(  t\right)  =p\left(  t\right)  X_{1}\left(
t\right)  $ should include all $O(\sqrt{\epsilon})$-terms of the drift term of
(\ref{heur-4'}). Applying It\^{o}'s formula to $p\left(  t\right)
X_{1}\left(  t\right)  $, we can determine that%
\begin{equation}
\left\{
\begin{array}
[c]{rl}%
dY_{1}(t)= & -\left[  g_{x}(t)X_{1}(t)+g_{y}(t)Y_{1}(t)+g_{z}(t)(Z_{1}%
(t)-\Delta(t)I_{E_{\epsilon}}(t))-q(t)\delta\sigma(t,\Delta)I_{E_{\epsilon}%
}(t)\right]  dt+Z_{1}(t)dB(t),\\
Y_{1}(T)= & \phi_{x}(\bar{X}(T))X_{1}(T),
\end{array}
\right.  \label{heur-5}%
\end{equation}
and $(p(\cdot),q(\cdot))$ satisfies the following equation:%
\begin{equation}
\left\{
\begin{array}
[c]{rl}%
dp(t)= & -\left\{  g_{x}(t)+g_{y}(t)p(t)+g_{z}(t)K_{1}(t)+b_{x}(t)p(t)+b_{y}%
(t)p^{2}(t)\right. \\
& \left.  +b_{z}(t)K_{1}(t)p(t)+\sigma_{x}(t)q(t)+\sigma_{y}(t)p(t)q(t)+\sigma
_{z}(t)K_{1}(t)q(t)\right\}  dt+q(t)dB(t),\\
p(T)= & \phi_{x}(\bar{X}(T)),
\end{array}
\right.  \label{eq-p}%
\end{equation}
where%
\begin{equation}
K_{1}(t)=(1-p(t)\sigma_{z}(t))^{-1}\left[  \sigma_{x}(t)p(t)+\sigma
_{y}(t)p^{2}(t)+q(t)\right]  . \label{def-k1}%
\end{equation}
Thus, we obtain the relationship%
\begin{equation}%
\begin{array}
[c]{l}%
Y_{1}\left(  t\right)  =p\left(  t\right)  X_{1}\left(  t\right)  ,\\
Z_{1}(t)=(1-p(t)\sigma_{z}(t))^{-1}p(t)(\delta\sigma(t,\Delta)-\sigma
_{z}(t)\Delta(t))I_{E_{\epsilon}}(t)+K_{1}(t)X_{1}\left(  t\right)  .
\end{array}
\label{heur-6}%
\end{equation}
Combining (\ref{heur-2}) and (\ref{heur-6}), we obtain
\begin{align*}
\Delta(t)  &  =(1-p(t)\sigma_{z}(t))^{-1}p(t)(\delta\sigma(t,\Delta
)-\sigma_{z}(t)\Delta(t)),\\
Z_{1}^{\prime}(t)  &  =K_{1}(t)X_{1}\left(  t\right)  ,
\end{align*}
which implies the following algebra equation
\begin{equation}
\Delta(t)=p(t)\delta\sigma(t,\Delta). \label{heur-7}%
\end{equation}
From (\ref{heur-4}), (\ref{heur-4'}), (\ref{heur-5'}) and (\ref{heur-5}), it
is easy to deduce that $(X_{2}(\cdot),Y_{2}(\cdot))$ satisfies the following
equation:
\begin{equation}
\left\{
\begin{array}
[c]{rl}%
dX_{2}(t)= & \{b_{x}(t)X_{2}(t)+b_{y}(t)Y_{2}(t)+b_{z}(t)Z_{2}(t)+\delta
b(t,\Delta)I_{E_{\epsilon}}(t)\\
& +\frac{1}{2}\left[  X_{1}(t),Y_{1}(t),Z_{1}(t)-\Delta(t)I_{E_{\epsilon}%
}(t)\right]  D^{2}b(t)\left[  X_{1}(t),Y_{1}(t),Z_{1}(t)-\Delta
(t)I_{E_{\epsilon}}(t)\right]  ^{\intercal}\}dt\\
& +\left\{  \sigma_{x}(t)X_{2}(t)+\sigma_{y}(t)Y_{2}(t)+\sigma_{z}%
(t)Z_{2}(t)+\delta\sigma_{x}(t,\Delta)X_{1}(t)I_{E_{\epsilon}}(t)+\delta
\sigma_{y}(t,\Delta)Y_{1}(t)I_{E_{\epsilon}}(t)\right. \\
& +\delta\sigma_{z}(t,\Delta)\left(  Z_{1}(t)-\Delta(t)I_{E_{\epsilon}%
}(t)\right) \\
& \left.  +\frac{1}{2}\left[  X_{1}(t),Y_{1}(t),Z_{1}(t)-\Delta
(t)I_{E_{\epsilon}}(t)\right]  D^{2}\sigma(t)\left[  X_{1}(t),Y_{1}%
(t),Z_{1}(t)-\Delta(t)I_{E_{\epsilon}}(t)\right]  ^{\intercal}\right\}
dB(t),\\
dY_{2}(t)= & -\left\{  g_{x}(t)X_{2}(t)+g_{y}(t)Y_{2}(t)+g_{z}(t)Z_{2}%
(t)+\left[  q(t)\delta\sigma(t,\Delta)+\delta g(t,\Delta)\right]
I_{E_{\epsilon}}(t)\right. \\
& \left.  +\frac{1}{2}\left[  X_{1}(t),Y_{1}(t),Z_{1}(t)-\Delta
(t)I_{E_{\epsilon}}(t)\right]  D^{2}g(t)\left[  X_{1}(t),Y_{1}(t),Z_{1}%
(t)-\Delta(t)I_{E_{\epsilon}}(t)\right]  ^{\intercal}\right\}  dt+Z_{2}%
(t)dB(t),\\
X_{2}(0)= & 0,\text{ }Y_{2}(T)=\phi_{x}(\bar{X}(T))X_{2}(T)+\frac{1}{2}%
\phi_{xx}(\bar{X}(T))X_{1}^{2}(T).
\end{array}
\right.  \label{heur-8}%
\end{equation}
In the following two subsections, we give the rigorous proofs for the above
heuristic derivations.

\subsection{First-order expansion}

From the heuristic derivation, in order to obtain the first-order variational
equation of \eqref{state-eq}, we need to introduce the first-order adjoint
equation (\ref{eq-p}). Since the generator of \eqref{eq-p} does not satisfy
Lipschitz condition, we firstly explore the solvability of \eqref{eq-p}.

For $\beta_{0}>0$ and $y\in\mathbb{R}$, set
\[
G(y)=L_{1}+\left(  L_{2}+L_{1}+\beta_{0}^{-1}L_{1}L_{2}\right)  |y|+\left[
L_{2}+\beta_{0}^{-1}(L_{1}L_{2}+L_{2}^{2})\right]  y^{2}+\beta_{0}^{-1}%
L_{2}^{2}|y|^{3},\ y\in\mathbb{R}\text{.}%
\]
Let $s(\cdot)$ be the maximal solution to the following equation:%
\begin{equation}
s(t)=L_{1}+\int_{t}^{T}G(s(r))dr,\;t\in\lbrack0,T]; \label{u-ode}%
\end{equation}
and $l(\cdot)$ be the minimal solution to the following equation:
\begin{equation}
l(t)=-L_{1}-\int_{t}^{T}G(l(r))dr,\;t\in\lbrack0,T]. \label{l-ode}%
\end{equation}
Moreover, set
\begin{equation}
t_{1}=T-\int_{-\infty}^{-L_{1}}\frac{1}{G(y)}dy,\ \ t_{2}=T-\int_{L_{1}%
}^{\infty}\frac{1}{G(y)}dy,\ \ t^{\ast}=t_{1}\vee t_{2}. \label{def-t}%
\end{equation}

\begin{lemma}
\label{t-star} For given $\beta_{0}>0$, then there exists a $\delta>0$ such
that when $L_{2}<\delta$, we have $t^{\ast}<0$.
\end{lemma}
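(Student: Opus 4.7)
The plan is to exploit the fact that, as $L_2 \to 0^+$, the function $G(y)$ degenerates to $G_0(y) := L_1(1+|y|)$, which has divergent reciprocal integral on $[L_1,\infty)$ and $(-\infty,-L_1]$. Concretely, I will show that $\int_{L_1}^{\infty} \frac{1}{G(y)}\,dy \to \infty$ and $\int_{-\infty}^{-L_1} \frac{1}{G(y)}\,dy \to \infty$ as $L_2 \to 0^+$, which forces $t_2 < 0$ and $t_1 < 0$ respectively for sufficiently small $L_2$, whence $t^\ast = t_1 \vee t_2 < 0$.

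First I would fix a truncation level $M > L_1$ and analyze $I(L_2;M) := \int_{L_1}^{M} \frac{1}{G(y)}\,dy$. Restricting to $L_2 \leq 1$, the coefficients of $|y|$, $y^2$, $|y|^3$ in $G$ are bounded uniformly in $L_2$ on $[L_1,M]$, and $G(y) \geq L_1 > 0$, so $\frac{1}{G(y)} \leq \frac{1}{L_1}$ pointwise. Since $G(y) \to L_1(1+|y|)$ pointwise as $L_2 \to 0^+$, dominated convergence gives
\[
\lim_{L_2 \to 0^+} I(L_2;M) = \int_{L_1}^{M} \frac{dy}{L_1(1+y)} = \frac{1}{L_1}\ln\!\frac{1+M}{1+L_1}.
\]
Next I would choose $M$ large enough that $\frac{1}{L_1}\ln\frac{1+M}{1+L_1} > T+1$. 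Then the pointwise limit above furnishes a $\delta_+ > 0$ such that $I(L_2;M) > T$ for all $0 < L_2 < \delta_+$. Since $1/G \geq 0$, we conclude $\int_{L_1}^{\infty} \frac{1}{G(y)}\,dy \geq I(L_2;M) > T$, i.e.\ $t_2 < 0$.

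The argument for $t_1 < 0$ is verbatim the same using $G(-y) = G(y)$ (by the $|y|$-symmetry of $G$), producing some $\delta_- > 0$; taking $\delta := \delta_+ \wedge \delta_-$ finishes the proof. The only mildly subtle point is ensuring the dominating bound for the limit step is uniform in $L_2$ in a neighbourhood of $0$, which is why I restricted to $L_2 \leq 1$ from the outset; this is a routine check and not a genuine obstacle. Note that for $L_2 > 0$ the full integral $\int_{L_1}^{\infty}\frac{1}{G(y)}\,dy$ is automatically finite (since $G(y) \gtrsim \beta_0^{-1}L_2^2|y|^3$ at infinity), so the truncation at $M$ is harmless and only lower-bounds the full integral.
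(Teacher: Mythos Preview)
Your proof is correct and follows the same idea as the paper: show that $\int_{L_1}^\infty \frac{dy}{G(y)} \to \infty$ as $L_2 \to 0^+$ since $G(y) \to L_1(1+|y|)$ pointwise. The paper's version is marginally shorter because it observes that $G(y)$ is monotone increasing in $L_2$ (each coefficient of $|y|$, $y^2$, $|y|^3$ in $G$ is nondecreasing in $L_2$), so $1/G(y) \uparrow 1/(L_1(1+|y|))$ and the monotone convergence theorem applies directly on the full half-line, avoiding your truncation-plus-dominated-convergence step.
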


\begin{proof}
We only prove that there exists a $\delta>0$ such that when $L_{2}<\delta$, we
have $t_{2}<0$. Note that $G(y)$ is a monotonic function with respect to
$L_{2}$. As $L_{2}\rightarrow0$,
\[
\frac{1}{G(y)}\uparrow\frac{1}{L_{1}(1+|y|)}.
\]
Applying the monotone convergence theorem, we obtain
\[
\int_{L_{1}}^{\infty}\frac{1}{G(y)}dy\uparrow\int_{L_{1}}^{\infty}\frac
{1}{L_{1}(1+|y|)}dy=\infty.
\]
By the definition of $t_{2}$, the result is obvious.
\end{proof}

\begin{assumption}
\label{assum-3} There exists a positive constant $\beta_{0}\in(0,1)$ such
that
\[
t^{\ast}<0,
\]
and%
\begin{equation}
\lbrack s(0)\vee(-l(0))]L_{3}\leq1-\beta_{0}. \label{assum-cl}%
\end{equation}

\end{assumption}

\begin{remark}
Note that $G(\cdot)$ is independent of $L_{3}$. Therefore by Lemma
\ref{t-star}, Assumption \ref{assum-3} holds when $L_{2}$ and $L_{3}$ are
small enough.
\end{remark}

\begin{theorem}
\label{exist-unique-BSDE}Suppose Assumptions \ref{assum-2}(i)-(ii) and
\ref{assum-3} hold. Then \eqref{eq-p} has a bounded solution $\ $such that
\[
|p(t)|\leq\lbrack s(0)\vee(-l(0))],
\]
and $q(\cdot)\in L_{\mathcal{F}}^{2,\beta}([0,T];\mathbb{R})$, for any
$\beta\geq1$.
\end{theorem}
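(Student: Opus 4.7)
The plan is to bring the adjoint BSDE \eqref{eq-p} into a form suitable for the Lepeltier--San Martin existence theorem for continuous BSDEs with quadratic growth in $q$, and then establish the pointwise bound on $p$ by comparing it with the ODEs \eqref{u-ode}--\eqref{l-ode}. The delicate point is that the generator of \eqref{eq-p} is only well-defined while $|p(t)\sigma_z(t)|<1$, which forces the existence and the a priori estimate to be proved simultaneously via a truncation--comparison argument.

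First I would set $M:=s(0)\vee(-l(0))$ and consider the truncated equation obtained by replacing $p$ with $\hat p:=(-M)\vee p\wedge M$ wherever $K_1$ features the factor $(1-p\sigma_z)^{-1}$. Under Assumption \ref{assum-3} one has $|\hat p\sigma_z|\leq ML_3\leq 1-\beta_0$, so this factor is bounded by $\beta_0^{-1}$, and after substituting $K_1=(1-\hat p\sigma_z)^{-1}(\sigma_x p+\sigma_y p^2+q)$ the generator takes the form
\[
f(t,p,q)=A(t,p)+B(t,p)\,q+C(t,p)\,q^2,
\]
where $A$ has polynomial growth in $p$, $B$ is affine in $p$, and $|C|=|\sigma_z(1-\hat p\sigma_z)^{-1}|\leq L_3/\beta_0$. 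Since $\phi_x(\bar X(T))$ is bounded by $L_1$, the Lepeltier--San Martin theorem \cite{LS02} yields a bounded solution $(p,q)$ of the truncated equation, with $q\in L^{2,\beta}_{\mathcal F}$ for every $\beta\geq 1$ via the standard BMO/reverse-Hölder machinery available once $p$ is proved bounded.

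The heart of the argument is the bound $l(0)\leq p(t)\leq s(0)$, which a posteriori makes the truncation inactive and thus promotes $(p,q)$ to a solution of \eqref{eq-p} itself. I would prove the upper bound by comparison: set $\eta(t):=p(t)-s(t)$, apply Itô's formula to a smooth approximation of $(\eta^+)^2$, and use that on $\{\eta>0\}$ one has $|p|\leq |s(t)|+\eta^+$, so that the generator of $p$ is dominated by
\[
f(t,p,q)\leq G(|p|)+\bigl(\text{affine in }q\bigr)+C q^2.
\]
Applying Young's inequality $|\text{affine}\cdot q|\leq \tfrac12 q^2\cdot\beta_0^{-1}\cdot(\cdot)+\tfrac{\beta_0}{2}(\cdot)^2$ to the cross terms, one reproduces exactly the coefficients appearing in the definition of $G$, after which the $q^2$ contributions on the right are absorbed by the $\int\mathbf 1_{\{\eta>0\}}q^2dr$ coming from the quadratic variation of $p$ in Itô's formula. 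What remains is a Gronwall-type inequality for $\mathbb E[(\eta^+(t))^2]$ with zero terminal value, which forces $\eta^+\equiv 0$. A symmetric argument against $l(\cdot)$ gives the lower bound. The condition $t^\ast<0$ ensures $s$ and $l$ do not explode on $[0,T]$, so $M$ is finite.

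The main obstacle will be the precise matching of constants in step three: the definition of $G$, with its distinctive factors of $\beta_0^{-1}L_1L_2$ and $\beta_0^{-1}L_2^2$, is tailored exactly to the Young-inequality splitting one is forced to apply to the terms $\sigma_xq+\sigma_ypq+\sigma_zK_1q$ after substituting $K_1$, and verifying that no additional term is left unabsorbed is the main bookkeeping difficulty. Once that is in place, the existence, uniqueness of the a priori bound, and the $L^{2,\beta}$-integrability of $q$ follow from the now-standard theory of quadratic BSDEs with bounded terminal data.
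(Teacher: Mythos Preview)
Your overall strategy coincides with the paper's: invoke Lepeltier--San Martin \cite{LS02} for existence of a bounded solution (after a truncation you correctly identify as necessary, and which the paper leaves implicit), and then appeal to the BMO/reverse-H\"older machinery --- the paper cites Corollary~4 of Briand--Hu \cite{HuBSDEquad} --- for $q\in L^{2,\beta}_{\mathcal F}$.

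The paper's proof is in fact two sentences: existence \emph{and} the pointwise bound $l(t)\le p(t)\le s(t)$ are both obtained directly from Theorem~4 of \cite{LS02}; this is precisely why $G$, $s$, $l$, and $t^\ast$ are introduced beforehand to match the hypotheses of that theorem. Your separate comparison step is therefore redundant once \cite{LS02} is invoked on the truncated generator. More importantly, the specific comparison technique you propose --- It\^o's formula on $(\eta^+)^2$ with $\eta=p-s$ --- does not close for a generator quadratic in $q$: the contribution $2\eta^+\cdot\sigma_z(1-\hat p\sigma_z)^{-1}q^2$ can be absorbed by the quadratic-variation term $\int\mathbf 1_{\{\eta>0\}}q^2\,dr$ only if $2\eta^+L_3\beta_0^{-1}\le 1$, which presupposes the very smallness of $\eta^+$ you are trying to establish. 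The comparison behind \cite{LS02} (in the Kobylanski tradition) proceeds via an exponential change of variable or a monotone Lipschitz approximation that linearises the quadratic term, not via an $L^2$ energy estimate. So if you wish to reprove the bound rather than cite \cite{LS02}, you would need to replace the technique in that step.
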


\begin{proof}
The proof of the existence is a direct consequence of Theorem 4 in
\cite{LS02}. Furthermore, similar to Corollary 4 in \cite{HuBSDEquad}, we can
obtain $q(\cdot)\in L_{\mathcal{F}}^{2,\beta}\left(  [0,T];\mathbb{R}\right)
$ for any $\beta\geq1$.
\end{proof}

\begin{remark}
The proof of the uniqueness for $(p(\cdot),q(\cdot))$ can be found in Theorem
\ref{unique-pq} in the appendix.
\end{remark}

In order to introduce the first-order variational equation, we study the
following algebra equation
\begin{equation}
\Delta(t)=p(t)(\sigma(t,\bar{X}(t),\bar{Y}(t),\bar{Z}(t)+\Delta
(t),u(t))-\sigma(t,\bar{X}(t),\bar{Y}(t),\bar{Z}(t),\bar{u}(t))),\;t\in
\lbrack0,T], \label{def-delt}%
\end{equation}
where $u\left(  \cdot\right)  $ is a given admissible control.

\begin{remark}
It should be note that the $\Delta(t)$ depends on the optimal control $\bar
{u}(\cdot)$, the adjoint process $p(\cdot)$, and the control $u(\cdot)$.
\end{remark}

\begin{lemma}
\label{delta-exist}Under the same assumptions as in Theorem
\ref{exist-unique-BSDE}. Then (\ref{def-delt}) has a unique adapted solution
$\Delta(\cdot)$. Moreover,%
\begin{equation}%
\begin{array}
[c]{c}%
|\Delta(t)|\leq C(1+|\bar{X}(t)|+|\bar{Y}(t)|+|u(t)|+|\bar{u}(t)|),\text{
}t\in\lbrack0,T],\\
\sup\limits_{0\leq t\leq T}\mathbb{E}[|\Delta(t)|^{8}]<\infty,
\end{array}
\label{del-new-3}%
\end{equation}
where $C$ is a constant depending on $\beta_{0}$, $L$, $L_{1}$, $L_{2}$,
$L_{3}$, $T$.
\end{lemma}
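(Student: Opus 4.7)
The plan is to solve the algebra equation (\ref{def-delt}) pointwise in $(t,\omega)$ by a contraction mapping argument, then recover $\mathbb{F}$-adaptedness via Picard iteration, and finally derive the quantitative bounds directly from the fixed-point identity. For each $(t,\omega)$ I define $F_{t,\omega}:\mathbb{R}\to\mathbb{R}$ by
\begin{equation*}
F_{t,\omega}(\Delta):=p(t)\bigl(\sigma(t,\bar{X}(t),\bar{Y}(t),\bar{Z}(t)+\Delta,u(t))-\sigma(t,\bar{X}(t),\bar{Y}(t),\bar{Z}(t),\bar{u}(t))\bigr),
\end{equation*}
so that $\Delta(t,\omega)$ solves (\ref{def-delt}) at $(t,\omega)$ if and only if it is a fixed point of $F_{t,\omega}$. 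Since $\sigma$ is Lipschitz in $z$ with constant $L_{3}=\|\sigma_{z}\|_{\infty}$ and Theorem \ref{exist-unique-BSDE} gives $|p(t)|\le s(0)\vee(-l(0))$, Assumption \ref{assum-3} yields
\begin{equation*}
|F_{t,\omega}(\Delta_{1})-F_{t,\omega}(\Delta_{2})|\le[s(0)\vee(-l(0))]\,L_{3}\,|\Delta_{1}-\Delta_{2}|\le(1-\beta_{0})|\Delta_{1}-\Delta_{2}|,
\end{equation*}
so $F_{t,\omega}$ is a strict contraction with factor $1-\beta_{0}<1$ and admits a unique fixed point $\Delta(t,\omega)$.

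To confirm $\mathbb{F}$-adaptedness I run the Picard iteration $\Delta^{0}(t)\equiv0$ and $\Delta^{n+1}(t):=F_{t,\cdot}(\Delta^{n}(t))$; each iterate is $\mathbb{F}$-adapted because $p,\bar{X},\bar{Y},\bar{Z},u,\bar{u}$ are and $\sigma$ is jointly Borel. The contraction estimate gives uniform pointwise convergence $\Delta^{n}(t,\omega)\to\Delta(t,\omega)$, so the limit inherits $\mathbb{F}$-adaptedness.

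For the quantitative bounds I plug $\Delta(t)$ into the fixed-point identity to obtain
\begin{equation*}
|\Delta(t)|\le|F_{t,\omega}(\Delta(t))-F_{t,\omega}(0)|+|F_{t,\omega}(0)|\le(1-\beta_{0})|\Delta(t)|+|F_{t,\omega}(0)|,
\end{equation*}
whence $|\Delta(t)|\le\beta_{0}^{-1}|F_{t,\omega}(0)|$. Decomposing
\begin{equation*}
\sigma(t,\bar{X}(t),\bar{Y}(t),\bar{Z}(t),u(t))-\sigma(t,\bar{X}(t),\bar{Y}(t),\bar{Z}(t),\bar{u}(t))
\end{equation*}
through the intermediate point $(t,0,0,\bar{Z}(t),\cdot)$, using the bounded partials $\sigma_{x},\sigma_{y}$ from Assumption \ref{assum-2}(i)--(ii) together with the growth bound $|\sigma(t,0,0,z,u)-\sigma(t,0,0,z,u')|\le L(1+|u|+|u'|)$, and using that $|p(t)|$ is bounded, I obtain
\begin{equation*}
|F_{t,\omega}(0)|\le C(1+|\bar{X}(t)|+|\bar{Y}(t)|+|u(t)|+|\bar{u}(t)|),
\end{equation*}
which gives the first estimate in (\ref{del-new-3}). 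Raising to the $8$th power, taking expectation, applying Theorem \ref{est-fbsde-lp} to the optimal state to control $\sup_{t}\mathbb{E}[|\bar{X}(t)|^{8}+|\bar{Y}(t)|^{8}]$, and using the admissibility condition $\sup_{t}\mathbb{E}[|u(t)|^{8}+|\bar{u}(t)|^{8}]<\infty$ yields the $L^{8}$ bound. The only delicate step is verifying adaptedness of the pointwise fixed point, which the Picard argument dispatches cleanly; the remainder is a routine contraction-mapping and growth-estimate computation, so I do not anticipate a substantial obstacle.
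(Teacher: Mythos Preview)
Your proposal is correct and follows essentially the same contraction-mapping approach as the paper: both exploit the key bound $|p(t)|L_{3}\le[s(0)\vee(-l(0))]L_{3}\le 1-\beta_{0}$ from Assumption~\ref{assum-3} to make the fixed-point map a strict contraction, and both extract the growth estimate $|\Delta(t)|\le\beta_{0}^{-1}|p(t)\delta\sigma(t)|$ directly from the fixed-point identity. The only cosmetic difference is that the paper sets up the contraction on the function space $L_{\mathcal{F}}^{2}([0,T];\mathbb{R})$ (so adaptedness is built into the space from the start), whereas you run the contraction pointwise in $(t,\omega)$ and recover adaptedness a posteriori via the Picard iterates; both framings are standard and equivalent here.
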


\begin{proof}
We first prove uniqueness. Let $\Delta(\cdot)$ and $\Delta^{\prime}(\cdot)$ be
two adapted solutions to (\ref{def-delt}). Then%
\begin{equation}%
\begin{array}
[c]{l}%
\left\vert \Delta(t)-\Delta^{\prime}(t)\right\vert \\
=|p(t)||\sigma(t,\bar{X}(t),\bar{Y}(t),\bar{Z}(t)+\Delta(t),u(t))-\sigma
(t,\bar{X}(t),\bar{Y}(t),\bar{Z}(t)+\Delta^{\prime}(t),u(t))|\\
\leq\lbrack s(0)\vee(-l(0))]L_{3}\left\vert \Delta(t)-\Delta^{\prime
}(t)\right\vert \\
\leq(1-\beta_{0})\left\vert \Delta(t)-\Delta^{\prime}(t)\right\vert ,
\end{array}
\label{del-new-1}%
\end{equation}
which implies $\Delta(t)=\Delta^{\prime}(t)$ for $t\in\lbrack0,T]$. Now we
construct a contraction mapping on $L_{\mathcal{F}}^{2}([0,T];\mathbb{R})$ to
prove the existence. For each given $\tilde{\Delta}(\cdot)\in L_{\mathcal{F}%
}^{2}([0,T];\mathbb{R})$, define the operator $\tilde{\Delta}(\cdot
)\rightarrow\Delta(\cdot)$ by $\Gamma$, where%
\[
\Delta(t)=p(t)(\sigma(t,\bar{X}(t),\bar{Y}(t),\bar{Z}(t)+\tilde{\Delta
}(t),u(t))-\sigma(t,\bar{X}(t),\bar{Y}(t),\bar{Z}(t),\bar{u}(t))),\;t\in
\lbrack0,T].
\]
Following the same steps as (\ref{del-new-1}), we can get%
\begin{equation}
|\Delta(t)|\leq(1-\beta_{0})\left\vert \tilde{\Delta}(t)\right\vert
+p(t)\delta\sigma(t),\;t\in\lbrack0,T], \label{del-new-2}%
\end{equation}
which implies $\Delta(\cdot)\in L_{\mathcal{F}}^{2}([0,T];\mathbb{R})$. For
each $\tilde{\Delta}_{i}(\cdot)\in L_{\mathcal{F}}^{2}([0,T];\mathbb{R})$,
denote $\Delta_{i}(\cdot)=\Gamma\left(  \tilde{\Delta}_{i}(\cdot)\right)  $,
$i=1$, $2$. Similar to (\ref{del-new-1}), we have%
\[
\mathbb{E}\left[  \int_{0}^{T}\left\vert \Delta_{1}(t)-\Delta_{2}%
(t)\right\vert ^{2}dt\right]  \leq(1-\beta_{0})^{2}\mathbb{E}\left[  \int%
_{0}^{T}\left\vert \tilde{\Delta}_{1}(t)-\tilde{\Delta}_{2}(t)\right\vert
^{2}dt\right]  .
\]
Thus, by the contraction mapping theorem, (\ref{def-delt}) has an adapted
solution $\Delta(\cdot)\in L_{\mathcal{F}}^{2}([0,T];\mathbb{R})$. Moreover,
for any adapted solution $\Delta(\cdot)$ to (\ref{def-delt}), it follows from
(\ref{del-new-2}) with $\tilde{\Delta}(\cdot)=\Delta(\cdot)$ that%
\[
|\Delta(t)|\leq\beta_{0}^{-1}p(t)\delta\sigma(t)\leq C(1+|\bar{X}(t)|+|\bar
{Y}(t)|+|u(t)|+|\bar{u}(t)|),\text{ }t\in\lbrack0,T],
\]
which implies (\ref{del-new-3}).
\end{proof}

\begin{remark}
If the diffusion term is independent of $z$, that is $\sigma_{z}(\cdot)=0$,
then we obtain $\Delta(t)=p(t)\delta\sigma(t)$. If the diffusion term contains
$z$ in a linear form, for example $\sigma(t,x,y,z,u)=A(t)z+\sigma
_{1}(t,x,y,u)$, then we obtain
\[
\Delta(t)=\left(  1-p(t)A(t)\right)  ^{-1}p(t)\left(  \sigma_{1}(t,\bar
{X}(t),\bar{Y}(t),u(t))-\sigma_{1}(t,\bar{X}(t),\bar{Y}(t),\bar{u}(t))\right)
.
\]

\end{remark}

Now we introduce the first-order variational equation:
\begin{equation}
\left\{
\begin{array}
[c]{rl}%
dX_{1}(t)= & \left[  b_{x}(t)X_{1}(t)+b_{y}(t)Y_{1}(t)+b_{z}(t)(Z_{1}%
(t)-\Delta(t)I_{E_{\epsilon}}(t))\right]  dt\\
& +\left[  \sigma_{x}(t)X_{1}(t)+\sigma_{y}(t)Y_{1}(t)+\sigma_{z}%
(t)(Z_{1}(t)-\Delta(t)I_{E_{\epsilon}}(t))+\delta\sigma(t,\Delta
)I_{E_{\epsilon}}(t)\right]  dB(t),\\
X_{1}(0)= & 0,
\end{array}
\right.  \label{new-form-x1}%
\end{equation}
and
\begin{equation}
\left\{
\begin{array}
[c]{lll}%
dY_{1}(t) & = & -\left[  g_{x}(t)X_{1}(t)+g_{y}(t)Y_{1}(t)+g_{z}%
(t)(Z_{1}(t)-\Delta(t)I_{E_{\epsilon}}(t))-q(t)\delta\sigma(t,\Delta
)I_{E_{\epsilon}}(t)\right]  dt+Z_{1}(t)dB(t),\\
Y_{1}(T) & = & \phi_{x}(\bar{X}(T))X_{1}(T).
\end{array}
\right.  \label{new-form-y1}%
\end{equation}

\begin{remark}
The existence and uniqueness of $(X_{1}(\cdot),Y_{1}(\cdot),Z_{1}(\cdot))$ in
\eqref{new-form-x1} and \eqref{new-form-y1} is guaranteed by Theorem
\ref{est-fbsde-lp}.
\end{remark}

\begin{assumption}
The solution $q(\cdot)$ of (\ref{eq-p}) is a bounded process.
\label{assm-q-bound}
\end{assumption}

The relationship between $(Y_{1}(t),Z_{1}(t))$ and $X_{1}(t)$ as pointed out
in our heuristic derivation is obtained in the following lemma.

\begin{lemma}
\label{lemma-y1}Suppose Assumptions \ref{assum-2}(i)-(ii), \ref{assum-3} and
\ref{assm-q-bound} hold. Then we have
\begin{align*}
Y_{1}(t)  &  =p(t)X_{1}(t),\\
Z_{1}(t)  &  =K_{1}(t)X_{1}(t)+\Delta(t)I_{E_{\epsilon}}(t),
\end{align*}
where $p(\cdot)$ is the solution of \eqref{eq-p} and $K_{1}\left(
\cdot\right)  $ is given in (\ref{def-k1}).
\end{lemma}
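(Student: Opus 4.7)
The plan is to apply It\^{o}'s formula to the product $p(t)X_1(t)$, form the difference $\xi(t):=Y_1(t)-p(t)X_1(t)$ and the corresponding martingale integrand $\eta(t):=Z_1(t)-p(t)\nu(t)-q(t)X_1(t)$ (where $\nu$ denotes the diffusion coefficient in \eqref{new-form-x1}), and then show that $(\xi,\eta)$ solves a linear BSDE with Lipschitz generator and zero terminal condition; uniqueness for such a BSDE will force $\xi\equiv 0$ and $\eta\equiv 0$, which yields the two claimed identities simultaneously.

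\noindent First I would check existence/uniqueness of $(X_1,Y_1,Z_1)$ for the coupled linear FBSDE \eqref{new-form-x1}--\eqref{new-form-y1} via Theorem~\ref{est-fbsde-lp}: under Assumption~\ref{assum-2}(i)-(ii) all linear coefficients are bounded, the inhomogeneous term $\delta\sigma(t,\Delta)I_{E_\epsilon}$ has the required integrability by Lemma~\ref{delta-exist}, and $\Lambda_\beta<1$ by assumption. Under Assumptions~\ref{assum-3} and~\ref{assm-q-bound} the adjoint process $(p,q)$ is bounded, and since $[s(0)\vee(-l(0))]L_3\le 1-\beta_0$ we have $|p\sigma_z|\le 1-\beta_0$, so $(1-p\sigma_z)^{-1}$ and hence $K_1$ defined in \eqref{def-k1} are bounded as well.

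\noindent Next, I would apply It\^o's formula to $p(t)X_1(t)$ using $dp=-\Upsilon\,dt+q\,dB$ from \eqref{eq-p} (where $\Upsilon$ denotes the drift bracket) and $dX_1=\mu\,dt+\nu\,dB$ from \eqref{new-form-x1}, obtaining
\[
d(pX_1)=\bigl(p\mu+q\nu-X_1\Upsilon\bigr)\,dt+\bigl(p\nu+qX_1\bigr)\,dB.
\]
Subtracting this identity from \eqref{new-form-y1} gives $d\xi=R(t)\,dt+\eta\,dB$ with $\xi(T)=\phi_x(\bar X(T))X_1(T)-p(T)X_1(T)=0$. The main task is then to rewrite the drift $R(t)$ as a linear combination of $\xi$ and $\eta$ with bounded coefficients. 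Expanding $R(t)$ and collecting terms, the $X_1$-coefficient telescopes: the cross-terms produced by $p\mu$ and the $b$-part of $X_1\Upsilon$ cancel, so $X_1$ appears only through the block $p(g_y+b_yp+\sigma_yq)+K_1(g_z+b_zp+\sigma_zq)$, which combines with the $Y_1$- and $Z_1$-terms to produce $-A\xi+B(K_1X_1+\Delta I_{E_\epsilon}-Z_1)$ where $A:=g_y+b_yp+\sigma_yq$, $B:=g_z+b_zp+\sigma_zq$. Finally, writing $\eta$ explicitly, using $Y_1=pX_1+\xi$, the identity $(1-p\sigma_z)K_1=p\sigma_x+p^2\sigma_y+q$ (definition \eqref{def-k1} of $K_1$) and the algebra equation \eqref{def-delt} in the form $p\delta\sigma(t,\Delta)=\Delta$, so that $p(\delta\sigma(t,\Delta)-\sigma_z\Delta)=(1-p\sigma_z)\Delta$, one finds
\[
Z_1-K_1X_1-\Delta I_{E_\epsilon}=(1-p\sigma_z)^{-1}\bigl(\eta+p\sigma_y\xi\bigr),
\]
which turns $R(t)$ into a bounded linear functional of $(\xi,\eta)$.

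\noindent Thus $(\xi,\eta)$ solves a linear BSDE with bounded coefficients and zero terminal value, hence $\xi\equiv 0$ and $\eta\equiv 0$ by uniqueness. The first identity $Y_1=pX_1$ follows immediately, and substituting $\eta=0$ and $\xi=0$ into the displayed formula above yields $(1-p\sigma_z)(Z_1-K_1X_1-\Delta I_{E_\epsilon})=0$, whence $Z_1=K_1X_1+\Delta I_{E_\epsilon}$. The main obstacle is the algebraic manipulation in the drift: one must carefully weave together the definition of $K_1$ and the algebra equation for $\Delta$ to consolidate the $I_{E_\epsilon}$ terms produced by $-b_zp\Delta I_{E_\epsilon}$ (from $p\mu$), $q\delta\sigma(t,\Delta)I_{E_\epsilon}$ and $-q\sigma_z\Delta I_{E_\epsilon}$ (from $q\nu$) with $g_z\Delta I_{E_\epsilon}$, so that the whole inhomogeneous $I_{E_\epsilon}$-part enters only through the block $B\Delta I_{E_\epsilon}$ and disappears when one finally expresses the drift in terms of $\xi$ and $\eta$.
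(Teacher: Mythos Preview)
Your argument is correct but takes a genuinely different route from the paper. The paper proceeds by \emph{construction}: it introduces the decoupled linear SDE \eqref{eq-x1} for an auxiliary process $\tilde X_1$ (obtained by substituting the conjectured relations $Y_1=pX_1$, $Z_1=K_1X_1+\Delta I_{E_\epsilon}$ into \eqref{new-form-x1}), then sets $\tilde Y_1:=p\tilde X_1$, $\tilde Z_1:=K_1\tilde X_1+\Delta I_{E_\epsilon}$, checks via It\^o's formula that $(\tilde X_1,\tilde Y_1,\tilde Z_1)$ solves the coupled system \eqref{new-form-x1}--\eqref{new-form-y1}, and finally invokes the uniqueness part of Theorem~\ref{est-fbsde-lp} for the full FBSDE. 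You instead work directly with the given FBSDE solution, form the difference $(\xi,\eta)$, and reduce the claim to uniqueness for a homogeneous linear BSDE with bounded coefficients. The paper's approach mirrors the heuristic derivation of Section~\ref{heuristic} and reuses the FBSDE well-posedness already established; it also generalises cleanly to the unbounded-$q$ setting of Section~3.5 via Theorem~\ref{appen-th-linear-fbsde}. Your approach is algebraically heavier but needs only classical BSDE uniqueness rather than FBSDE uniqueness, and it makes explicit exactly how the definition \eqref{def-k1} of $K_1$ and the algebraic equation \eqref{def-delt} for $\Delta$ conspire to annihilate all inhomogeneous terms in the drift $R$; note however that it relies essentially on Assumption~\ref{assm-q-bound} to keep the coefficients $A=g_y+pb_y+q\sigma_y$ and $B=g_z+pb_z+q\sigma_z$ bounded, so it does not extend as readily when $q$ is merely in $L^{2,\beta}_{\mathcal F}$.
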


\begin{proof}
Consider the following stochastic differential equation:
\begin{equation}
\left\{
\begin{array}
[c]{rl}%
d\tilde{X}_{1}(t)= & \left\{  \left[  b_{x}(t)+b_{y}(t)p(t)+b_{z}%
(t)K_{1}(t)\right]  \tilde{X}_{1}(t)\right\}  dt\\
& +\left\{  \left[  \sigma_{x}(t)+p(t)\sigma_{y}(t)+\sigma_{z}(t)K_{1}%
(t)\right]  \tilde{X}_{1}(t)+\delta\sigma(t,\Delta)I_{E_{\epsilon}%
}(t)\right\}  dB(t),\\
\tilde{X}_{1}(0)= & 0.
\end{array}
\right.  \label{eq-x1}%
\end{equation}
It is easy to check that there exists a unique solution $\tilde{X}_{1}(\cdot)$
of \eqref{eq-x1}.

Set%
\begin{align}
\tilde{Y}_{1}(t)  &  =p(t)\tilde{X}_{1}(t),\label{first order-relation}\\
\tilde{Z}_{1}(t)  &  =K_{1}(t)\tilde{X}_{1}(t)+\Delta(t)I_{E_{\epsilon}%
}(t).\nonumber
\end{align}
Applying It\^{o}'s lemma to $p(t)\tilde{X}_{1}(t)$,
\[%
\begin{array}
[c]{lll}%
d\tilde{Y}_{1}(t) & = & -\left[  g_{x}(t)\tilde{X}_{1}(t)+g_{y}(t)\tilde
{Y}_{1}(t)+g_{z}(t)\tilde{Z}_{1}(t)-g_{z}(t)\Delta(t)I_{E_{\epsilon}%
}(t)-q(t)\delta\sigma(t,\Delta)I_{E_{\epsilon}}(t)\right]  dt+\tilde{Z}%
_{1}(t)dB(t).
\end{array}
\]
Thus $(\tilde{X}_{1}(\cdot),\tilde{Y}_{1}(\cdot),\tilde{Z}_{1}(\cdot))$ solves
(\ref{new-form-x1}) and (\ref{new-form-y1}), by Theorem \ref{est-fbsde-lp},
\ $(\tilde{X}_{1}(\cdot),\tilde{Y}_{1}(\cdot),\tilde{Z}_{1}(\cdot
))=(X_{1}(\cdot),Y_{1}(\cdot),Z_{1}(\cdot))$. This completes the proof.
\end{proof}

Then we have the following estimates.

\begin{lemma}
\label{est-one-order}Suppose Assumptions \ref{assum-2}, \ref{assum-3} and
\ref{assm-q-bound} hold. Then for any $2\leq\beta\leq8$, we have the following
estimates
\begin{equation}
\mathbb{E}\left[  \sup\limits_{t\in\lbrack0,T]}\left(  |X_{1}(t)|^{\beta
}+|Y_{1}(t)|^{\beta}\right)  \right]  +\mathbb{E}\left[  \left(  \int_{0}%
^{T}|Z_{1}(t)|^{2}dt\right)  ^{\beta/2}\right]  =O(\epsilon^{\beta/2}),
\label{est-x1-y1}%
\end{equation}%
\[
\mathbb{E}\left[  \sup\limits_{t\in\lbrack0,T]}\left(  |X^{\epsilon}%
(t)-\bar{X}(t)-X_{1}(t)|^{2}+|Y^{\epsilon}(t)-\bar{Y}(t)-Y_{1}(t)|^{2}\right)
\right]  +\mathbb{E}\left[  \int_{0}^{T}|Z^{\epsilon}(t)-\bar{Z}%
(t)-Z_{1}(t)|^{2}dt\right]  =O(\epsilon^{2}),
\]%
\[
\mathbb{E}\left[  \sup\limits_{t\in\lbrack0,T]}(|X^{\epsilon}(t)-\bar
{X}(t)-X_{1}(t)|^{4}+|Y^{\epsilon}(t)-\bar{Y}(t)-Y_{1}(t)|^{4})\right]
+\mathbb{E}\left[  \left(  \int_{0}^{T}|Z^{\epsilon}(t)-\bar{Z}(t)-Z_{1}%
(t)|^{2}dt\right)  ^{2}\right]  =o(\epsilon^{2}).
\]

\end{lemma}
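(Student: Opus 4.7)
The plan is to handle the three estimates in turn, each reducing to the $L^p$-estimate of Theorem \ref{est-fbsde-lp} after an appropriate decomposition. For the first-order estimate \eqref{est-x1-y1}, I would use the representations $Y_1=pX_1$ and $Z_1=K_1X_1+\Delta I_{E_\epsilon}$ from Lemma \ref{lemma-y1}. Under Assumptions \ref{assum-3} and \ref{assm-q-bound}, $p$ and $q$ are bounded; since $|1-p\sigma_z|\geq\beta_0$, the factor $(1-p\sigma_z)^{-1}$ in \eqref{def-k1} is also bounded, hence so is $K_1$. This reduces the problem to bounding $X_1$ alone, and applying standard $L^\beta$ moment estimates (Lemma \ref{sde-bsde}) to the linear SDE \eqref{eq-x1} with bounded coefficients yields
\[
\mathbb{E}\bigl[\sup_{t\in[0,T]}|X_1(t)|^\beta\bigr]\leq C\,\mathbb{E}\Bigl[\Bigl(\int_{E_\epsilon}|\delta\sigma(t,\Delta)|^2\,dt\Bigr)^{\beta/2}\Bigr].
\]
A Hölder bound on $E_\epsilon$ combined with the $L^\beta$-moment bound on $\delta\sigma(t,\Delta)$ (from the linear growth in Assumption \ref{assum-2}(i), admissibility of $u$, and Lemma \ref{delta-exist}) brings this to $O(\epsilon^{\beta/2})$; the same order transfers to $Y_1$ and to $\bigl(\int_0^T|Z_1|^2dt\bigr)^{\beta/2}$.

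For the two remainder estimates, I would define $\xi^{2,\epsilon}:=X^\epsilon-\bar X-X_1$, $\eta^{2,\epsilon}:=Y^\epsilon-\bar Y-Y_1$, $\zeta^{2,\epsilon}:=Z^\epsilon-\bar Z-Z_1$, and derive the linear FBSDE they satisfy by Taylor-expanding $b,\sigma,g,\phi$ at the shifted point $(\bar X,\bar Y,\bar Z+\Delta I_{E_\epsilon},u^\epsilon)$, as indicated in subsection \ref{heuristic}. Outside $E_\epsilon$ this point equals $(\bar X,\bar Y,\bar Z,\bar u)$; on $E_\epsilon$ it absorbs the $O(1)$ jump of $Z_1$, so that the arguments entering the Taylor expansion are uniformly $O(\sqrt\epsilon)$. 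After replacing each coefficient $\psi_\kappa(\tilde\Theta^\epsilon)$ by $\psi_\kappa(t)$ plus a residual supported on $E_\epsilon$ and subtracting the first-order system \eqref{new-form-x1}--\eqref{new-form-y1}, using $\zeta^{1,\epsilon}-\Delta I_{E_\epsilon}=\zeta^{2,\epsilon}+K_1X_1$, one arrives at a linear fully-coupled FBSDE for $(\xi^{2,\epsilon},\eta^{2,\epsilon},\zeta^{2,\epsilon})$ whose coefficients inherit $\Lambda_p<1$ from Assumption \ref{assum-2}(ii) and whose inhomogeneous drivers split into three classes: quadratic second-order Taylor remainders, mixed $\delta\psi_\kappa(t,\Delta)\cdot(\text{first-order process})\cdot I_{E_\epsilon}$ terms, and continuity moduli of the form $(\psi_\kappa(\tilde\Theta^\epsilon)-\psi_\kappa(t))\,\xi^{1,\epsilon}$ supported on $E_\epsilon$.

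Applying Theorem \ref{est-fbsde-lp} with $p=2$ and then $p=4$ reduces the remaining work to moment estimates on these drivers. The quadratic terms are controlled via \eqref{est-x1-y1} with $\beta=4$ and $\beta=8$; this is precisely where the $\beta=8$ case of Assumption \ref{assum-2}(ii) and the admissibility bound $\sup_t\mathbb{E}[|u(t)|^8]<\infty$ are essential. The indicator-supported mixed terms give $O(\epsilon^2)$ after Hölder on $E_\epsilon$, since the effective measure there is $\epsilon$. The continuity-modulus terms require a more delicate treatment: a uniform $L^4$ bound from Lemma \ref{est-epsilon-bar} combined with the continuity of $\psi_\kappa$ and dominated convergence upgrades $O(\epsilon^2)$ to $o(\epsilon^2)$. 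The main obstacle, as I see it, is the $L^4$ estimate on the remainder: closing the loop through Theorem \ref{est-fbsde-lp} requires uniform $L^4$-control of $K_1X_1$ and of $q\,\delta\sigma(t,\Delta)$ appearing inside the inhomogeneity, which is exactly why Assumption \ref{assm-q-bound} is imposed here; without the boundedness of $q$, $K_1$ would fail to lie in $L^\infty$ and the argument would have to be rerouted through the linear-in-$z$ structure explored later in the paper.
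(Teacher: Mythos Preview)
Your overall strategy matches the paper's: apply Theorem \ref{est-fbsde-lp} to the first-order variational FBSDE for \eqref{est-x1-y1}, then derive a linear FBSDE for the residual $(\xi^{2,\epsilon},\eta^{2,\epsilon},\zeta^{2,\epsilon})$ and bound its inhomogeneities. Two points of comparison are worth noting.

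For the first estimate, the paper does \emph{not} pass through Lemma \ref{lemma-y1}; it applies Theorem \ref{est-fbsde-lp} directly to \eqref{new-form-x1}--\eqref{new-form-y1}, treating $b_z(t)\Delta(t)I_{E_\epsilon}$, $\sigma_z(t)\Delta(t)I_{E_\epsilon}$, $\delta\sigma(t,\Delta)I_{E_\epsilon}$, $g_z(t)\Delta(t)I_{E_\epsilon}$ and $q(t)\delta\sigma(t,\Delta)I_{E_\epsilon}$ as the free terms, invoking Assumption \ref{assm-q-bound} only for the last one. Your route via $Y_1=pX_1$, $Z_1=K_1X_1+\Delta I_{E_\epsilon}$ and the linear SDE \eqref{eq-x1} is equally valid and arguably cleaner, since it localizes the estimate to a single SDE with bounded coefficients.

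For the residual estimates your description of the inhomogeneities is imprecise. In the paper's $A_1^\epsilon$, $B_1^\epsilon$, $C_1^\epsilon$, $D_1^\epsilon$ there are \emph{no} second-order Taylor remainders (those first enter in Lemma \ref{est-second-order}) and no mixed $\delta\psi_\kappa\cdot(\text{first-order})\cdot I_{E_\epsilon}$ terms; the structure is simply
\[
(\tilde\psi_\kappa^\epsilon(t)-\psi_\kappa(t))\,X_1(t)\ \text{(resp.\ }Y_1(t),\ K_1(t)X_1(t)\text{ in the }\sigma_z\text{-slot)}\ +\ \text{explicit }I_{E_\epsilon}\text{-terms},
\]
where the substitution $Z_1-\Delta I_{E_\epsilon}=K_1X_1$ in $B_1^\epsilon$ is the one place Lemma \ref{lemma-y1} is used. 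The differences $\tilde\psi_\kappa^\epsilon-\psi_\kappa$ are \emph{not} supported on $E_\epsilon$; they live on all of $[0,T]$ and are handled via the Lipschitz bound
\[
|\tilde\psi_\kappa^\epsilon(t)-\psi_\kappa(t)|\leq C\bigl(|\xi^{1,\epsilon}(t)|+|\eta^{1,\epsilon}(t)|+|\zeta^{1,\epsilon}(t)|\bigr)+|\delta\psi_\kappa(t)|I_{E_\epsilon}(t),
\]
furnished by the bounded second derivatives in Assumption \ref{assum-2}(iii). With this, the paper obtains the $o(\epsilon^2)$ in the $L^4$ estimate \emph{quantitatively} (each term is in fact $O(\epsilon^3)$ or $O(\epsilon^4)$) by pairing Lemma \ref{est-epsilon-bar} and \eqref{est-x1-y1} at $\beta=8$; no dominated-convergence argument is needed. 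Your dominated-convergence alternative would also close, but it underuses the available hypotheses.
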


\begin{proof}
By Theorem \ref{est-fbsde-lp}, we have
\begin{equation}%
\begin{array}
[c]{l}%
\mathbb{E}\left[  \sup\limits_{t\in\lbrack0,T]}\left(  |X_{1}(t)|^{\beta
}+|Y_{1}(t)|^{\beta}\right)  +\left(  \int_{0}^{T}|Z_{1}(t)|^{2}dt\right)
^{\beta/2}\right] \\
\leq C\mathbb{E}\left[  \left(  \int_{0}^{T}\left[  \left(  |b_{z}%
(t)|+|g_{z}(t)|\right)  |\Delta(t)|+|q(t)\delta\sigma(t,\Delta)|\right]
I_{E_{\epsilon}}(t)dt\right)  ^{\beta}\right] \\
\text{ \ }+C\mathbb{E}\left[  \left(  \int_{0}^{T}\left[  |\sigma_{z}%
(t)\Delta(t)|^{2}+|\delta\sigma(t,\Delta)|^{2}\right]  I_{E_{\epsilon}%
}(t)dt\right)  ^{\beta/2}\right] \\
\leq C\mathbb{E}\left[  \left(  \int_{E_{\epsilon}}\left(  1+|\bar
{X}(t)|+\left\vert \bar{Y}(t)\right\vert +\left\vert \bar{u}%
(t)|+|u(t)\right\vert \right)  dt\right)  ^{\beta}\right] \\
\text{ \ }+C\mathbb{E}\left[  \left(  \int_{E_{\epsilon}}\left(  1+|\bar
{X}(t)|^{2}+\left\vert \bar{Y}(t)\right\vert ^{2}+\left\vert \bar{u}%
(t)|^{2}+|u(t)\right\vert ^{2}\right)  dt\right)  ^{\beta/2}\right] \\
\leq C\epsilon^{^{\beta/2}}.
\end{array}
\label{est-1-order}%
\end{equation}
We use the notations $\xi^{1,\epsilon}(t)$, $\eta^{1,\epsilon}(t)$ and
$\zeta^{1,\epsilon}(t)$ in the proof of Lemma \ref{est-epsilon-bar} and let
\[%
\begin{array}
[c]{rl}%
\xi^{2,\epsilon}(t) & :=X^{\epsilon}(t)-\bar{X}(t)-X_{1}(t);\\
\eta^{2,\epsilon}(t) & :=Y^{\epsilon}(t)-\bar{Y}(t)-Y_{1}(t);\\
\zeta^{2,\epsilon}(t) & :=Z^{\epsilon}(t)-\bar{Z}(t)-Z_{1}(t);\\
\Theta(t) & :=(\bar{X}(t),\bar{Y}(t),\bar{Z}(t));\\
\Theta(t,\Delta I_{E_{\epsilon}}) & :=(\bar{X}(t),\bar{Y}(t),\bar{Z}%
(t)+\Delta(t)I_{E_{\epsilon}}(t));\\
\Theta^{\epsilon}(t) & :=(X^{\epsilon}(t),Y^{\epsilon}(t),Z^{\epsilon}(t)).
\end{array}
\]
Note that
\[%
\begin{array}
[c]{l}%
\delta\sigma(t,\Delta)I_{E_{\epsilon}}(t)=\sigma(t,\bar{X}(t),\bar{Y}%
(t),\bar{Z}(t)+\Delta(t)I_{E_{\epsilon}}(t),u^{\epsilon}(t))-\sigma
(t)=\sigma(t,\Theta(t,\Delta I_{E_{\epsilon}}(t)),u^{\epsilon}(t))-\sigma(t).
\end{array}
\]
We have%
\[%
\begin{array}
[c]{l}%
\sigma(t,\Theta^{\epsilon}(t),u^{\epsilon}(t))-\sigma(t)-\delta\sigma
(t,\Delta)I_{E_{\epsilon}}(t)\\
=\sigma(t,\Theta^{\epsilon}(t),u^{\epsilon}(t))-\sigma(t,\Theta(t,\Delta
I_{E_{\epsilon}}(t)),u^{\epsilon}(t))\\
=\tilde{\sigma}_{x}^{\epsilon}(t)(X^{\epsilon}(t)-\bar{X}(t))+\tilde{\sigma
}_{y}^{\epsilon}(t)(Y^{\epsilon}(t)-\bar{Y}(t))+\tilde{\sigma}_{z}^{\epsilon
}(t)(Z^{\epsilon}(t)-\bar{Z}(t)-\Delta(t)I_{E_{\epsilon}}(t)),
\end{array}
\]
where
\[
\tilde{\sigma}_{x}^{\epsilon}(t)=\int_{0}^{1}\sigma_{x}(t,\Theta(t,\Delta
I_{E_{\epsilon}}(t))+\theta(\Theta^{\epsilon}(t)-\Theta(t,\Delta
I_{E_{\epsilon}}(t))),u^{\epsilon}(t))d\theta,
\]
and $\tilde{\sigma}_{y}^{\epsilon}(t)$, $\tilde{\sigma}_{z}^{\epsilon}%
(t)$\ are defined similarly. \ 

Recall that $\tilde{b}_{x}^{\epsilon}(t)$, $\tilde{b}_{y}^{\epsilon}(t)$,
$\tilde{b}_{z}^{\epsilon}(t)$, $\tilde{g}_{x}^{\epsilon}(t)$, $\tilde{g}%
_{y}^{\epsilon}(t)$, $\tilde{g}_{z}^{\epsilon}(t)$ and $\tilde{\phi}%
_{x}^{\epsilon}(T)$ are defined in Lemma \ref{est-epsilon-bar}. Then,%
\begin{equation}
\left\{
\begin{array}
[c]{ll}%
d\xi^{2,\epsilon}(t)= & \left[  \tilde{b}_{x}^{\epsilon}(t)\xi^{2,\epsilon
}(t)+\tilde{b}_{y}^{\epsilon}(t)\eta^{2,\epsilon}(t)+\tilde{b}_{z}^{\epsilon
}(t)\zeta^{2,\epsilon}(t)+A_{1}^{\epsilon}(t)\right]  dt\\
& +\left[  \tilde{\sigma}_{x}^{\epsilon}(t)\xi^{2,\epsilon}(t)+\tilde{\sigma
}_{y}^{\epsilon}(t)\eta^{2,\epsilon}(t)+\tilde{\sigma}_{z}^{\epsilon}%
(t)\zeta^{2,\epsilon}(t))+B_{1}^{\epsilon}(t)\right]  dB(t),\\
\xi^{2,\epsilon}(0)= & 0,
\end{array}
\right.  \label{deri-x-x1}%
\end{equation}%
\[
\left\{
\begin{array}
[c]{rl}%
d\eta^{2,\epsilon}(t)= & -\left[  \tilde{g}_{x}^{\epsilon}(t)\xi^{2,\epsilon
}(t)+\tilde{g}_{y}^{\epsilon}(t)\eta^{2,\epsilon}(t)+\tilde{g}_{z}^{\epsilon
}(t)\zeta^{2,\epsilon}(t)+C_{1}^{\epsilon}(t)\right]  dt+\zeta^{2,\epsilon
}(t)dB(t),\\
\eta^{2,\epsilon}(T)= & \tilde{\phi}_{x}^{\epsilon}(T)\xi^{2,\epsilon
}(T)+D_{1}^{\epsilon}(T),
\end{array}
\right.
\]
where%
\[%
\begin{array}
[c]{rl}%
A_{1}^{\epsilon}(t)= & (\tilde{b}_{x}^{\epsilon}(t)-b_{x}(t))X_{1}%
(t)+(\tilde{b}_{y}^{\epsilon}(t)-b_{y}(t))Y_{1}(t)+(\tilde{b}_{z}^{\epsilon
}(t)-b_{z}(t))Z_{1}(t)+b_{z}(t)\Delta(t)I_{E_{\epsilon}}(t)+\delta
b(t)I_{E_{\epsilon}}(t),\\
B_{1}^{\epsilon}(t)= & (\tilde{\sigma}_{x}^{\epsilon}(t)-\sigma_{x}%
(t))X_{1}(t)+(\tilde{\sigma}_{y}^{\epsilon}(t)-\sigma_{y}(t))Y_{1}%
(t)+(\tilde{\sigma}_{z}^{\epsilon}(t)-\sigma_{z}(t))K_{1}(t)X_{1}(t),\\
C_{1}^{\epsilon}(t)= & (\tilde{g}_{x}^{\epsilon}(t)-g_{x}(t))X_{1}%
(t)+(\tilde{g}_{y}^{\epsilon}(t)-g_{y}(t))Y_{1}(t)+(\tilde{g}_{z}^{\epsilon
}(t)-g_{z}(t))Z_{1}(t)+\delta g(t)I_{E_{\epsilon}}(t)\\
& +g_{z}(t)\Delta(t)I_{E_{\epsilon}}(t)+q(t)\delta\sigma(t,\Delta
)I_{E_{\epsilon}}(t),\\
D_{1}^{\epsilon}(T)= & (\tilde{\phi}_{x}^{\epsilon}(T)-\phi_{x}(\bar
{X}(T)))X_{1}(T).
\end{array}
\]
By Theorem \ref{est-fbsde-lp}, we obtain
\[%
\begin{array}
[c]{l}%
\mathbb{E}\left[  \sup\limits_{t\in\lbrack0,T]}\left(  |\xi^{2,\epsilon
}(t)|^{2}+|\eta^{2,\epsilon}(t)|^{2}\right)  +\int_{0}^{T}|\zeta^{2,\epsilon
}(t)|^{2}dt\right] \\
\leq C\mathbb{E}\left[  \left(  \int_{0}^{T}\left(  |A_{1}^{\epsilon
}(t)|+|C_{1}^{\epsilon}(t)|\right)  dt\right)  ^{2}+\int_{0}^{T}%
|B_{1}^{\epsilon}(t)|^{2}dt+|D_{1}^{\epsilon}(T)|^{2}\right] \\
\leq C\mathbb{E}\left[  \left(  \int_{0}^{T}|A_{1}^{\epsilon}(t)|dt\right)
^{2}+\left(  \int_{0}^{T}|C_{1}^{\epsilon}(t)|dt\right)  ^{2}+\int_{0}%
^{T}|B_{1}^{\epsilon}(t)|^{2}dt+|D_{1}^{\epsilon}(T)|^{2}\right]  .
\end{array}
\]
Now we estimate term by term as follows.

(1) Since
\begin{equation}%
\begin{array}
[c]{l}%
\mathbb{E}\left[  \left(  \int_{0}^{T}|\tilde{b}_{z}^{\epsilon}(t)-b_{z}%
(t)||Z_{1}(t)|dt\right)  ^{2}\right] \\
\leq C\mathbb{E}\left[  \int_{0}^{T}|\tilde{b}_{z}^{\epsilon}(t)-b_{z}%
(t)|^{2}dt\int_{0}^{T}|Z_{1}(t)|^{2}dt\right] \\
\leq C\left\{  \mathbb{E}\left[  \left(  \int_{0}^{T}|\tilde{b}_{z}^{\epsilon
}(t)-b_{z}(t)|^{2}dt\right)  ^{2}\right]  \right\}  ^{\frac{1}{2}}\left\{
\mathbb{E}\left[  \left(  \int_{0}^{T}|Z_{1}(t)|^{2}dt\right)  ^{2}\right]
\right\}  ^{\frac{1}{2}}\\
\leq C\left\{  \mathbb{E}\left[  \sup\limits_{t\in\lbrack0,T]}\left(
|\xi^{1,\epsilon}(t)|^{4}+|\eta^{1,\epsilon}(t)|^{4}\right)  +\left(  \int%
_{0}^{T}\left(  |\zeta^{1,\epsilon}(t)|^{2}+|\delta b_{z}(t)|^{2}%
I_{E_{\epsilon}}(t)\right)  dt\right)  ^{2}\right]  \right\}  ^{\frac{1}{2}%
}\left\{  \mathbb{E}\left[  \left(  \int_{0}^{T}|Z_{1}(t)|^{2}dt\right)
^{2}\right]  \right\}  ^{\frac{1}{2}}\\
\leq C\epsilon^{2},
\end{array}
\label{est-b_x-z1}%
\end{equation}
the estimate of $\mathbb{E}\left[  \left(  \int_{0}^{T}\left(  \tilde{b}%
_{x}^{\epsilon}(t)-b_{x}(t)\right)  X_{1}(t)dt\right)  ^{2}\right]  $ and
$\mathbb{E}\left\{  \left[  \int_{0}^{T}\left(  \tilde{b}_{y}^{\epsilon
}(t)-b_{y}(t)\right)  Y_{1}(t)dt\right]  ^{2}\right\}  $ is the same as
\eqref{est-b_x-z1},
\begin{equation}
\mathbb{E}\left[  \left(  \int_{0}^{T}|b_{z}(t)\Delta(t)I_{E_{\epsilon}%
}(t)|dt\right)  ^{2}\right]  \leq C\epsilon\int_{E_{\epsilon}}\mathbb{E}%
[|\Delta(t)|^{2}]dt\leq C\epsilon^{2}, \label{est-delta}%
\end{equation}%
\[%
\begin{array}
[c]{ll}%
\mathbb{E}\left[  (\int_{0}^{T}|\delta b(t)I_{E_{\epsilon}}(t)|dt)^{2}\right]
& \leq\mathbb{E}\left[  \left(  \int_{E_{\epsilon}}\left(  1+|\bar
{X}(t)|+\left\vert \bar{Y}(t)\right\vert +\left\vert \bar{Z}(t)\right\vert
+\left\vert u(t)\right\vert +\left\vert \bar{u}(t)\right\vert \right)
dt\right)  ^{2}\right] \\
& \leq\epsilon\mathbb{E}\left[  \int_{E_{\epsilon}}\left(  1+|\bar{X}%
(t)|^{2}+\left\vert \bar{Y}(t)\right\vert ^{2}+\left\vert \bar{Z}%
(t)\right\vert ^{2}+\left\vert u(t)\right\vert ^{2}+\left\vert \bar
{u}(t)\right\vert ^{2}\right)  dt\right] \\
& \leq C\epsilon^{2},
\end{array}
\]
then,
\[
\mathbb{E}\left[  (\int_{0}^{T}|A_{1}^{\epsilon}(t)|dt)^{2}\right]  \leq
C\epsilon^{2}.
\]

(2)
\begin{equation}%
\begin{array}
[c]{l}%
\mathbb{E}\left[  \int_{0}^{T}|\tilde{\sigma}_{z}^{\epsilon}(t)-\sigma
_{z}(t)|^{2}|K_{1}(t)X_{1}(t)|^{2}dt\right] \\
\leq C\mathbb{E}\left[  \sup\limits_{0\leq t\leq T}|X_{1}(t)|^{2}\int_{0}%
^{T}|\tilde{\sigma}_{z}^{\epsilon}(t)-\sigma_{z}(t)|^{2}dt\right] \\
\leq C\left\{  \mathbb{E}\left[  \sup\limits_{0\leq t\leq T}|X_{1}%
(t)|^{4}\right]  \right\}  ^{\frac{1}{2}}\left\{  \mathbb{E}\left[
\sup\limits_{t\in\lbrack0,T]}\left(  |\xi^{1,\epsilon}(t)|^{4}+|\eta
^{1,\epsilon}(t)|^{4}\right)  \right.  \right. \\
\ \ \left.  \left.  +\left(  \int_{0}^{T}\left(  |\zeta^{1,\epsilon}%
(t)-\Delta(t)I_{E_{\epsilon}}(t)|^{2}+|\delta\sigma_{z}(t,\Delta
)|^{2}I_{E_{\epsilon}}(t)\right)  dt\right)  ^{2}\right]  \right\}  ^{\frac
{1}{2}}\\
\leq C\epsilon\left\{  \epsilon^{2}+\epsilon\int_{E_{\epsilon}}\mathbb{E}%
[|\Delta(t)|^{4}]dt\right\}  ^{\frac{1}{2}}\\
\leq C\epsilon^{2},
\end{array}
\label{est-sigmazx1}%
\end{equation}
and the estimate of $\mathbb{E}\left[  \int_{0}^{T}|\tilde{\sigma}%
_{x}^{\epsilon}(t)-\sigma_{x}(t)|^{2}|X_{1}(t)|^{2}dt\right]  $ and
$\mathbb{E}\left[  \int_{0}^{T}|\tilde{\sigma}_{y}^{\epsilon}(t)-\sigma
_{y}(t)|^{2}|Y_{1}(t)|^{2}dt\right]  $ is the same as \eqref{est-sigmazx1}.
Thus,
\begin{equation}
\mathbb{E}\left[  \int_{0}^{T}|B_{1}^{\epsilon}(t)|^{2}dt\right]  \leq
C\epsilon^{2}.
\end{equation}

(3)%
\begin{equation}
\mathbb{E}\left[  |D_{1}^{\epsilon}(T)|^{2}\right]  \leq C\left\{
\mathbb{E}\left[  \sup\limits_{0\leq t\leq T}|X_{1}(t)|^{4}\right]  \right\}
^{\frac{1}{2}}\left\{  \mathbb{E}\left[  \sup\limits_{0\leq t\leq T}%
|\xi^{1,\epsilon}(t)|^{4}\right]  \right\}  ^{\frac{1}{2}}\leq C\epsilon^{2}.
\end{equation}

(4) The estimate of $\mathbb{E}\left[  (\int_{0}^{T}|C_{1}^{\epsilon
}(t)|dt)^{2}\right]  $ is the same as $\mathbb{E}\left[  (\int_{0}^{T}%
|A_{1}^{\epsilon}(t)|dt)^{2}\right]  $.

Similarly, we obtain
\[%
\begin{array}
[c]{l}%
\mathbb{E}\left\{  \sup\limits_{t\in\lbrack0,T]}\left[  |\xi^{2,\epsilon
}(t)|^{4}+|\eta^{2,\epsilon}(t)|^{4}\right]  +\left(  \int_{0}^{T}%
|\zeta^{2,\epsilon}(t)|^{2}dt\right)  ^{2}\right\} \\
\leq C\mathbb{E}\left\{  \left(  \int_{0}^{T}\left(  |A_{1}^{\epsilon
}(t)|+|C_{1}^{\epsilon}(t)|\right)  dt\right)  ^{4}+\left(  \int_{0}^{T}%
|B_{1}^{\epsilon}(t)|^{2}dt\right)  ^{2}+\left\vert D_{1}^{\epsilon
}(T)\right\vert ^{4}\right\} \\
=o(\epsilon^{2}).
\end{array}
\]
This completes the proof.
\end{proof}

\subsection{Second-order expansion}

Noting that $Z_{1}(t)=K_{1}(t)X_{1}(t)+\Delta(t)I_{E_{\epsilon}}(t)$\ in Lemma
\ref{lemma-y1}, then we introduce the second-order variational equation as
follows:
\begin{equation}
\left\{
\begin{array}
[c]{rl}%
dX_{2}(t)= & \left\{  b_{x}(t)X_{2}(t)+b_{y}(t)Y_{2}(t)+b_{z}(t)Z_{2}%
(t)+\delta b(t,\Delta)I_{E_{\epsilon}}(t)\right. \\
& \left.  +\frac{1}{2}\left[  X_{1}(t),Y_{1}(t),K_{1}(t)X_{1}(t)\right]
D^{2}b(t)\left[  X_{1}(t),Y_{1}(t),K_{1}(t)X_{1}(t)\right]  ^{\intercal
}\right\}  dt\\
& +\left\{  \sigma_{x}(t)X_{2}(t)+\sigma_{y}(t)Y_{2}(t)+\frac{1}{2}\left[
X_{1}(t),Y_{1}(t),K_{1}(t)X_{1}(t)\right]  D^{2}\sigma(t)\left[
X_{1}(t),Y_{1}(t),K_{1}(t)X_{1}(t)\right]  ^{\intercal}\right. \\
& \left.  +\sigma_{z}(t)Z_{2}(t)+\left[  \delta\sigma_{x}(t,\Delta
)X_{1}(t)+\delta\sigma_{y}(t,\Delta)Y_{1}(t)\right]  I_{E_{\epsilon}%
}(t)+\delta\sigma_{z}(t,\Delta)K_{1}(t)X_{1}(t)I_{E_{\epsilon}}(t)\right\}
dB(t),\\
X_{2}(0)= & 0,
\end{array}
\right.  \label{new-form-x2}%
\end{equation}
and
\begin{equation}
\left\{
\begin{array}
[c]{ll}%
dY_{2}(t)= & -\left\{  g_{x}(t)X_{2}(t)+g_{y}(t)Y_{2}(t)+g_{z}(t)Z_{2}%
(t)+\left[  q(t)\delta\sigma(t,\Delta)+\delta g(t,\Delta)\right]
I_{E_{\epsilon}}(t)\right. \\
& \left.  +\frac{1}{2}\left[  X_{1}(t),Y_{1}(t),K_{1}(t)X_{1}(t)\right]
D^{2}g(t)\left[  X_{1}(t),Y_{1}(t),K_{1}(t)X_{1}(t)\right]  ^{\intercal
}\right\}  dt+Z_{2}(t)dB(t),\\
Y_{2}(T)= & \phi_{x}(\bar{X}(T))X_{2}(T)+\frac{1}{2}\phi_{xx}(\bar{X}%
(T))X_{1}^{2}(T).
\end{array}
\right.  \label{new-form-y2}%
\end{equation}
In the following lemma, we estimate the orders of $X_{2}(\cdot)$, $Y_{2}%
(\cdot)$, $Z_{2}(\cdot)$, and $Y^{\epsilon}(0)-\bar{Y}(0)-Y_{1}(0)-Y_{2}(0)$.

\begin{lemma}
\label{est-second-order} Suppose Assumptions \ref{assum-2}, \ref{assum-3} and
\ref{assm-q-bound} hold. Then for any $2\leq\beta\leq4$ we have%
\[%
\begin{array}
[c]{rl}%
\mathbb{E}\left[  \sup\limits_{t\in\lbrack0,T]}(|X_{2}(t)|^{2}+|Y_{2}%
(t)|^{2})\right]  +\mathbb{E}\left[  \int_{0}^{T}|Z_{2}(t)|^{2}dt\right]  &
=O(\epsilon^{2}),\\
\mathbb{E}\left[  \sup\limits_{t\in\lbrack0,T]}(|X_{2}(t)|^{\beta}%
+|Y_{2}(t)|^{\beta})\right]  +\mathbb{E}\left[  \left(  \int_{0}^{T}%
|Z_{2}(t)|^{2}dt\right)  ^{\frac{\beta}{2}}\right]  & =o(\epsilon^{\frac
{\beta}{2}}),\\
Y^{\epsilon}(0)-\bar{Y}(0)-Y_{1}(0)-Y_{2}(0) & =o(\epsilon).
\end{array}
\]

\end{lemma}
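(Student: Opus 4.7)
The plan is to apply the $L^{p}$-estimate of Theorem \ref{est-fbsde-lp} to the linear fully coupled FBSDE satisfied by $(X_{2},Y_{2},Z_{2})$ in \eqref{new-form-x2}--\eqref{new-form-y2}, and then, for the last identity, to apply it once more to a third-order remainder $(\xi^{3,\epsilon},\eta^{3,\epsilon},\zeta^{3,\epsilon})$ whose FBSDE is obtained via a second-order Taylor expansion around $(\bar X,\bar Y,\bar Z+\Delta I_{E_{\epsilon}})$. Throughout, the estimates of $X_{1},Y_{1},Z_{1}$ from Lemma \ref{est-one-order}, the bound on $\Delta(\cdot)$ from Lemma \ref{delta-exist}, and the boundedness of $q(\cdot)$ (Assumption \ref{assm-q-bound}) will be the principal quantitative inputs.

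For the first estimate, I group the inhomogeneous terms in \eqref{new-form-x2}--\eqref{new-form-y2} into four families: (a) the indicator-weighted zero-order terms $\delta b(t,\Delta)I_{E_{\epsilon}}$, $[q\delta\sigma(t,\Delta)+\delta g(t,\Delta)]I_{E_{\epsilon}}$; (b) the indicator-weighted first-order terms $[\delta\sigma_{x}(t,\Delta)X_{1}+\delta\sigma_{y}(t,\Delta)Y_{1}+\delta\sigma_{z}(t,\Delta)K_{1}X_{1}]I_{E_{\epsilon}}$; (c) the Hessian quadratic terms $\frac{1}{2}[X_{1},Y_{1},K_{1}X_{1}]D^{2}\psi(t)[\,\cdot\,]^{\intercal}$ for $\psi=b,\sigma,g$; and (d) the boundary term $\tfrac{1}{2}\phi_{xx}(\bar X(T))X_{1}^{2}(T)$. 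For family (a), $\int_{E_{\epsilon}}|\delta b(t,\Delta)|\,dt\leq C\epsilon^{1/2}(\int_{E_{\epsilon}}(1+|\bar X|^{2}+|\bar Y|^{2}+|u|^{2}+|\bar u|^{2})dt)^{1/2}$ by Cauchy--Schwarz, whence the squared $L^{1}$-norm is $O(\epsilon^{2})$; for (b), we use that the factors $\delta\sigma_{\kappa}$ are bounded and that $\mathbb{E}[\int_{E_{\epsilon}}|X_{1}|^{2}dt]\leq\epsilon\,\mathbb{E}[\sup|X_{1}|^{2}]=O(\epsilon^{2})$; for (c), the Hessians are bounded and Lemma \ref{est-one-order} gives $\mathbb{E}[\sup|X_{1}|^{4}]+\mathbb{E}[\sup|Y_{1}|^{4}]=O(\epsilon^{2})$; (d) is $O(\epsilon^{2})$ for the same reason. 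Combining these through Theorem \ref{est-fbsde-lp} with $p=2$ yields the $O(\epsilon^{2})$ bound.

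For the $o(\epsilon^{\beta/2})$ bound with $2\leq\beta\leq 4$, the same decomposition works but the numerology tightens: family (a) contributes $O(\epsilon^{\beta})$ (by Hölder $(\int_{E_{\epsilon}}|\delta b(t,\Delta)|\,dt)^{\beta}\leq \epsilon^{\beta-1}\int_{E_{\epsilon}}|\delta b(t,\Delta)|^{\beta}dt$ and Lemma \ref{delta-exist}); family (b) contributes $\epsilon\cdot\mathbb{E}[\sup|X_{1}|^{\beta}\cdots]=O(\epsilon\cdot\epsilon^{\beta/2})=O(\epsilon^{1+\beta/2})$; families (c) and (d) contribute $O(\epsilon^{\beta})$ by the $2\beta\leq 8$ moment estimate of Lemma \ref{est-one-order}. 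Since each of $\epsilon^{\beta},\epsilon^{1+\beta/2}$ is $o(\epsilon^{\beta/2})$ for $\beta>1$, Theorem \ref{est-fbsde-lp} delivers the desired $o(\epsilon^{\beta/2})$.

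The last identity is the main obstacle. Set
\[
\xi^{3,\epsilon}:=X^{\epsilon}-\bar X-X_{1}-X_{2},\quad \eta^{3,\epsilon}:=Y^{\epsilon}-\bar Y-Y_{1}-Y_{2},\quad \zeta^{3,\epsilon}:=Z^{\epsilon}-\bar Z-Z_{1}-Z_{2}.
\]
I would split the increment $\psi(t,\Theta^{\epsilon},u^{\epsilon})-\psi(t)$ into $[\psi(t,\Theta^{\epsilon},u^{\epsilon})-\psi(t,\Theta(\cdot,\Delta I_{E_{\epsilon}}),u^{\epsilon})]+[\psi(t,\Theta(\cdot,\Delta I_{E_{\epsilon}}),u^{\epsilon})-\psi(t)]$; the second bracket equals $\delta\psi(t,\Delta)I_{E_{\epsilon}}$ by construction, while the first admits a second-order Taylor expansion in $(X^{\epsilon}-\bar X,Y^{\epsilon}-\bar Y,Z^{\epsilon}-\bar Z-\Delta I_{E_{\epsilon}})$ around $\Theta(\cdot,\Delta I_{E_{\epsilon}})$. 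Substituting the decomposition $X^{\epsilon}-\bar X=X_{1}+X_{2}+\xi^{3,\epsilon}$ etc. and matching the pieces with the drift and diffusion in \eqref{new-form-x1}, \eqref{new-form-y1}, \eqref{new-form-x2}, \eqref{new-form-y2} cancels the leading orders and leaves a linear FBSDE for $(\xi^{3,\epsilon},\eta^{3,\epsilon},\zeta^{3,\epsilon})$ with coefficients given by interpolated partial derivatives and with forcing terms that are (i) Taylor remainders of $D^{2}\psi$, controlled via the continuity and boundedness of $D^{2}\psi$ together with the second-order estimate just proved and the $o(\epsilon^{2})$ estimate of Lemma \ref{est-one-order}, and (ii) crossed terms involving $(\tilde\psi_{\kappa}^{\epsilon}-\psi_{\kappa})$ acting on $X_{2},Y_{2},Z_{2}$, which are $o(\epsilon)$ in the relevant $L^{p}$-norm by Cauchy--Schwarz and the $o(\epsilon^{\beta/2})$ bound above. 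Theorem \ref{est-fbsde-lp} with $p=2$ then gives $\mathbb{E}[\sup_{t}|\eta^{3,\epsilon}(t)|^{2}]=o(\epsilon^{2})$, and since $\eta^{3,\epsilon}(0)$ is deterministic the conclusion $Y^{\epsilon}(0)-\bar Y(0)-Y_{1}(0)-Y_{2}(0)=o(\epsilon)$ follows. The principal delicacy is precisely this bookkeeping: one must verify that every $O(\epsilon)$ term produced by the Taylor remainder either cancels against a term in \eqref{new-form-x2}--\eqref{new-form-y2} or is absorbed into an $o(\epsilon)$ error via dominated convergence and the continuity of $D\psi,D^{2}\psi$.
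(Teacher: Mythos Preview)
Your treatment of the first two estimates is essentially the same as the paper's: apply Theorem \ref{est-fbsde-lp} to the linear FBSDE \eqref{new-form-x2}--\eqref{new-form-y2} and bound the four families of forcing terms using Lemma \ref{est-one-order} and Lemma \ref{delta-exist}. That part is fine.

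The gap is in the third estimate. You propose to apply Theorem \ref{est-fbsde-lp} with $p=2$ directly to the remainder FBSDE for $(\xi^{3,\epsilon},\eta^{3,\epsilon},\zeta^{3,\epsilon})$ and conclude $\mathbb{E}[\sup_{t}|\eta^{3,\epsilon}(t)|^{2}]=o(\epsilon^{2})$. This does not work in the generality of the lemma. The diffusion forcing term $B_{2}^{\epsilon}(t)$ contains the Hessian-in-$z$ contribution
\[
\tilde\sigma_{zz}^{\epsilon}(t)\bigl(\zeta^{1,\epsilon}(t)-\Delta(t)I_{E_{\epsilon}}(t)\bigr)^{2}-\sigma_{zz}(t)\bigl(K_{1}(t)X_{1}(t)\bigr)^{2},
\]
and after the telescoping $\zeta^{1,\epsilon}-\Delta I_{E_{\epsilon}}-K_{1}X_{1}=\zeta^{2,\epsilon}$ one is left with a term of the form $\tilde\sigma_{zz}^{\epsilon}\,\zeta^{2,\epsilon}\,(\zeta^{1,\epsilon}-\Delta I_{E_{\epsilon}})$. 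To feed this into Theorem \ref{est-fbsde-lp} you would need
\[
\mathbb{E}\Bigl[\int_{0}^{T}\bigl|\zeta^{2,\epsilon}(t)\bigr|^{2}\bigl|\zeta^{1,\epsilon}(t)-\Delta(t)I_{E_{\epsilon}}(t)\bigr|^{2}\,dt\Bigr]=o(\epsilon^{2}),
\]
but the available estimates on $\zeta^{1,\epsilon}$ and $\zeta^{2,\epsilon}$ are only of $L^{2,\beta}$-type, with no pathwise or sup control, so the pointwise product under the time integral cannot be separated. The paper flags exactly this obstruction in the paragraph following the proof, and Lemma \ref{lemma-est-sup} confirms that the sup estimate you aim for is obtainable only under the extra structural hypothesis $\sigma_{zz}\equiv 0$.

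The paper avoids squaring $B_{2}^{\epsilon}$ altogether by passing to a dual representation: it introduces an auxiliary linear FBSDE $(h,m,n)$ (adjoint to the linearised system) and applies It\^o's formula to $m(t)\xi^{3,\epsilon}(t)-h(t)\eta^{3,\epsilon}(t)$, which yields
\[
\eta^{3,\epsilon}(0)=\mathbb{E}\Bigl[h(T)D_{2}^{\epsilon}(T)+\int_{0}^{T}\bigl(m(t)A_{2}^{\epsilon}(t)+n(t)B_{2}^{\epsilon}(t)+h(t)C_{2}^{\epsilon}(t)\bigr)\,dt\Bigr].
\]
Now only $\mathbb{E}\bigl[\int_{0}^{T}|n(t)B_{2}^{\epsilon}(t)|\,dt\bigr]=o(\epsilon)$ is required, i.e.\ a single power of $B_{2}^{\epsilon}$, and the $\sigma_{zz}$ cross term can be handled by one more Taylor step together with Cauchy--Schwarz in $\omega$. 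This dual trick is the missing ingredient in your plan; without it the argument breaks precisely at the diffusion remainder when $\sigma$ is genuinely nonlinear in $z$.
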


\begin{proof}
By Theorem \ref{est-fbsde-lp}, we have%
\[%
\begin{array}
[c]{l}%
\mathbb{E}\left[  \sup\limits_{t\in\lbrack0,T]}(|X_{2}(t)|^{2}+|Y_{2}%
(t)|^{2})+\int_{0}^{T}|Z_{2}(t)|^{2}dt\right] \\
\leq C\mathbb{E}\left[  \left(  \int_{0}^{T}[(|\delta b(t,\Delta
)|+|\delta\sigma(t,\Delta)|+|\delta g(t,\Delta)|)I_{E_{\epsilon}}%
(t)+|X_{1}(t)|^{2}+|Y_{1}(t)|^{2}]dt\right)  ^{2}\right] \\
\text{ \ }+C\mathbb{E}\left[  \int_{0}^{T}\left[  |X_{1}(t)|^{4}%
+|Y_{1}(t)|^{4}+(|X_{1}(t)|^{2}+|Y_{1}(t)|^{2})I_{E_{\epsilon}}(t)\right]
dt\right] \\
\leq C\epsilon\mathbb{E}\left[  \int_{E_{\epsilon}}(1+|\bar{X}(t)|^{2}%
+|\bar{Y}(t)|^{2}+|\bar{Z}(t)|^{2}+|u(t)|^{2}+|\bar{u}(t)|^{2})dt\right] \\
\text{ \ }+C\mathbb{E}\left[  \sup\limits_{t\in\lbrack0,T]}\left(
|X_{1}(t)|^{4}+|Y_{1}(t)|^{4}\right)  \right]  +C\epsilon\mathbb{E}\left[
\sup\limits_{t\in\lbrack0,T]}(|X_{1}(t)|^{2}+|Y_{1}(t)|^{2})\right] \\
\leq C\epsilon^{2},
\end{array}
\]%
\begin{equation}%
\begin{array}
[c]{l}%
\mathbb{E}\left[  \sup\limits_{t\in\lbrack0,T]}(|X_{2}(t)|^{\beta}%
+|Y_{2}(t)|^{\beta})+\left(  \int_{0}^{T}|Z_{2}(t)|^{2}dt\right)
^{\frac{\beta}{2}}\right] \\
\leq C\mathbb{E}\left[  \left(  \int_{0}^{T}[(|\delta b(t,\Delta
)|+|\delta\sigma(t,\Delta)|+|\delta g(t,\Delta)|)I_{E_{\epsilon}}%
(t)+|X_{1}(t)|^{2}+|Y_{1}(t)|^{2}]dt\right)  ^{\beta}\right] \\
\text{ \ }+C\mathbb{E}\left[  \left(  \int_{0}^{T}\left[  |X_{1}%
(t)|^{4}+|Y_{1}(t)|^{4}+(|X_{1}(t)|^{2}+|Y_{1}(t)|^{2})I_{E_{\epsilon}%
}(t)\right]  dt\right)  ^{\frac{\beta}{2}}\right] \\
\leq C\epsilon^{\frac{\beta}{2}}\mathbb{E}\left[  \left(  \int_{E_{\epsilon}%
}(1+|\bar{X}(t)|^{2}+|\bar{Y}(t)|^{2}+|\bar{Z}(t)|^{2}+|u(t)|^{2}+|\bar
{u}(t)|^{2})dt\right)  ^{\frac{\beta}{2}}\right] \\
\text{ \ }+C\mathbb{E}\left[  \sup\limits_{t\in\lbrack0,T]}\left(
|X_{1}(t)|^{2\beta}+|Y_{1}(t)|^{2\beta}\right)  \right]  +C\epsilon
^{\frac{\beta}{2}}\mathbb{E}\left[  \sup\limits_{t\in\lbrack0,T]}%
(|X_{1}(t)|^{\beta}+|Y_{1}(t)|^{\beta})\right] \\
=o(\epsilon^{\frac{\beta}{2}}).
\end{array}
\end{equation}
Now, we focus on the last estimate. We use the same notations $\xi
^{1,\epsilon}(t)$, $\eta^{1,\epsilon}(t)$, $\zeta^{1,\epsilon}(t)$,
$\xi^{2,\epsilon}(t)$, $\eta^{2,\epsilon}(t)$ and $\zeta^{2,\epsilon}(t)$ in
the proof of Lemma \ref{est-epsilon-bar} and Lemma \ref{est-one-order}. Let
\[%
\begin{array}
[c]{rl}%
\xi^{3,\epsilon}(t) & :=X^{\epsilon}(t)-\bar{X}(t)-X_{1}(t)-X_{2}(t);\\
\eta^{3,\epsilon}(t) & :=Y^{\epsilon}(t)-\bar{Y}(t)-Y_{1}(t)-Y_{2}(t);\\
\zeta^{3,\epsilon}(t) & :=Z^{\epsilon}(t)-\bar{Z}(t)-Z_{1}(t)-Z_{2}(t);\\
\Theta(t) & :=(\bar{X}(t),\bar{Y}(t),\bar{Z}(t));\\
\Theta(t,\Delta I_{E_{\epsilon}}) & :=(\bar{X}(t),\bar{Y}(t),\bar{Z}%
(t)+\Delta(t)I_{E_{\epsilon}}(t));\\
\Theta^{\epsilon}(t) & :=(X^{\epsilon}(t),Y^{\epsilon}(t),Z^{\epsilon}(t)).
\end{array}
\]

Define $\widetilde{D^{2}b^{\epsilon}}(t)$%
\[
\widetilde{D^{2}b^{\epsilon}}(t)=2\int_{0}^{1}\int_{0}^{1}\theta
D^{2}b(t,\Theta(t,\Delta I_{E_{\epsilon}})+\lambda\theta(\Theta^{\epsilon
}(t)-\Theta(t,\Delta I_{E_{\epsilon}})),u^{\epsilon}(t))d\theta d\lambda,
\]

$\widetilde{D^{2}\sigma^{\epsilon}}(t)$, $\widetilde{D^{2}g^{\epsilon}}(t)$
and $\tilde{\phi}_{xx}^{\epsilon}(T)$ are defined similarly. Then, we have
\begin{equation}
\left\{
\begin{array}
[c]{ll}%
d\xi^{3,\epsilon}(t)= & \left\{  b_{x}(t)\xi^{3,\epsilon}(t)+b_{y}%
(t)\eta^{3,\epsilon}(t)+b_{z}(t)\zeta^{3,\epsilon}(t)+A_{2}^{\epsilon
}(t)\right\}  dt\\
& +\left\{  \sigma_{x}(t)\xi^{3,\epsilon}(t)+\sigma_{y}(t)\eta^{3,\epsilon
}(t)+\sigma_{z}(t)\zeta^{3,\epsilon}(t)+B_{2}^{\epsilon}(t)\right\}  dB(t),\\
\xi^{3,\epsilon}(0)= & 0,
\end{array}
\right.  \label{x-x1-x2}%
\end{equation}
and%
\begin{equation}
\left\{
\begin{array}
[c]{lll}%
d\eta^{3,\epsilon}(t) & = & -\{g_{x}(t)\xi^{3,\epsilon}(t)+g_{y}%
(t)\eta^{3,\epsilon}(t)+g_{z}(t)\zeta^{3,\epsilon}(t)+C_{2}^{\epsilon
}(t)\}dt-\zeta^{3,\epsilon}(t)dB(t),\\
\eta^{3,\epsilon}(T) & = & \phi_{x}(\bar{X}(T))\xi^{3,\epsilon}(T)+D_{2}%
^{\epsilon}(T),
\end{array}
\right.  \label{y-y1-y2}%
\end{equation}
where%
\[%
\begin{array}
[c]{ll}%
A_{2}^{\epsilon}(t)= & \left[  \delta b_{x}(t,\Delta)\xi^{1,\epsilon
}(t)+\delta b_{y}(t,\Delta)\eta^{1,\epsilon}(t)+\delta b_{z}(t,\Delta)\left(
\zeta^{1,\epsilon}(t)-\Delta(t)I_{E_{\epsilon}}(t)\right)  \right]
I_{E_{\epsilon}}(t)\\
& +\frac{1}{2}\left[  \xi^{1,\epsilon}(t),\eta^{1,\epsilon}(t),\zeta
^{1,\epsilon}(t)-\Delta(t)I_{E_{\epsilon}}(t)\right]  \widetilde{D^{2}%
b^{\epsilon}}(t)\left[  \xi^{1,\epsilon}(t),\eta^{1,\epsilon}(t),\zeta
^{1,\epsilon}(t)-\Delta(t)I_{E_{\epsilon}}(t)\right]  ^{\intercal}\\
& -\frac{1}{2}\left[  X_{1}(t),Y_{1}(t),K_{1}(t)X_{1}(t)\right]
D^{2}b(t)\left[  X_{1}(t),Y_{1}(t),K_{1}(t)X_{1}(t)\right]  ^{\intercal},
\end{array}
\]%
\[%
\begin{array}
[c]{ll}%
B_{2}^{\epsilon}(t)= & \left[  \delta\sigma_{x}(t,\Delta)\xi^{2,\epsilon
}(t)+\delta\sigma_{y}(t,\Delta)\eta^{2,\epsilon}(t)+\delta\sigma_{z}%
(t,\Delta)\zeta^{2,\epsilon}(t)\right]  I_{E_{\epsilon}}(t)\\
& +\frac{1}{2}\left[  \xi^{1,\epsilon}(t),\eta^{1,\epsilon}(t),\zeta
^{1,\epsilon}(t)-\Delta(t)I_{E_{\epsilon}}(t)\right]  \widetilde{D^{2}%
\sigma^{\epsilon}}(t)\left[  \xi^{1,\epsilon}(t),\eta^{1,\epsilon}%
(t),\zeta^{1,\epsilon}(t)-\Delta(t)I_{E_{\epsilon}}(t)\right]  ^{\intercal}\\
& -\frac{1}{2}\left[  X_{1}(t),Y_{1}(t),K_{1}(t)X_{1}(t)\right]  D^{2}%
\sigma(t)\left[  X_{1}(t),Y_{1}(t),K_{1}(t)X_{1}(t)\right]  ^{\intercal},
\end{array}
\]%
\[%
\begin{array}
[c]{ll}%
C_{2}^{\epsilon}(t)= & \left[  \delta g_{x}(t,\Delta)\xi^{1,\epsilon
}(t)+\delta g_{y}(t,\Delta)\eta^{1,\epsilon}(t)+\delta g_{z}(t,\Delta)\left(
\zeta^{1,\epsilon}(t)-\Delta(t)I_{E_{\epsilon}}(t)\right)  \right]
I_{E_{\epsilon}}(t)\\
& +\frac{1}{2}\left[  \xi^{1,\epsilon}(t),\eta^{1,\epsilon}(t),\zeta
^{1,\epsilon}(t)-\Delta(t)I_{E_{\epsilon}}(t)\right]  \widetilde{D^{2}%
g^{\epsilon}}(t)\left[  \xi^{1,\epsilon}(t),\eta^{1,\epsilon}(t),\zeta
^{1,\epsilon}(t)-\Delta(t)I_{E_{\epsilon}}(t)\right]  ^{\intercal}\\
& -\frac{1}{2}\left[  X_{1}(t),Y_{1}(t),K_{1}(t)X_{1}(t)\right]
D^{2}g(t)\left[  X_{1}(t),Y_{1}(t),K_{1}(t)X_{1}(t)\right]  ^{\intercal},\\
D_{2}^{\epsilon}(T)= & \frac{1}{2}\tilde{\phi}_{xx}^{\epsilon}(T)\xi
^{1,\epsilon}(T)^{2}-\frac{1}{2}\phi_{xx}(\bar{X}(T))X_{1}^{2}(T).
\end{array}
\]

We introduce the following fully coupled FBSDE:
\begin{equation}
\left\{
\begin{array}
[c]{rl}%
dh(t)= & \left[  g_{y}(t)h(t)+b_{y}(t)m(t)+\sigma_{y}(t)n(t)\right]
dt+\left[  g_{z}(t)h(t)+b_{z}(t)m(t)+\sigma_{z}(t)n(t)\right]  dB(t),\\
h(0)= & 1,\\
dm(t)= & -\left[  g_{x}(t)h(t)+b_{x}(t)m(t)+\sigma_{x}(t)n(t)\right]
dt+n(t)dB(t),\\
m(T)= & \phi_{x}(\bar{X}(T))h(T).
\end{array}
\right.
\end{equation}
It has a unique solution due to Theorem \ref{est-fbsde-lp}. Applying It\^{o}'s
formula to
\[
m(t)\xi^{3,\epsilon}(t)-h(t)\eta^{3,\epsilon}(t),
\]
we have
\begin{equation}%
\begin{array}
[c]{ll}%
|\eta^{3,\epsilon}(0)| & =\left\vert \mathbb{E}\left[  h(T)D_{2}^{\epsilon
}(T)+\int_{0}^{T}\left(  m(t)A_{2}^{\epsilon}(t)+n(t)B_{2}^{\epsilon
}(t)+h(t)C_{2}^{\epsilon}(t)\right)  dt\right]  \right\vert \\
& \leq\mathbb{E}\left[  \left\vert h(T)D_{2}^{\epsilon}(T)\right\vert
+\int_{0}^{T}\left(  \left\vert m(t)A_{2}^{\epsilon}(t)\right\vert +\left\vert
n(t)B_{2}^{\epsilon}(t)\right\vert +\left\vert h(t)C_{2}^{\epsilon
}(t)\right\vert \right)  dt\right]  .
\end{array}
\label{second order-estimate}%
\end{equation}
We estimate each term as follows.

(1) {
\[%
\begin{array}
[c]{ll}%
\mathbb{E}\left[  |h(T)D_{2}^{\epsilon}(T)|\right]  & \leq\left\{
\mathbb{E}\left[  |h(T)|^{2}\right]  \right\}  ^{\frac{1}{2}}\left\{
\mathbb{E}\left[  |D_{2}^{\epsilon}(T)|^{2}\right]  \right\}  ^{\frac{1}{2}}\\
& \leq C\left\{  \mathbb{E}\left[  |\tilde{\phi}_{xx}^{\epsilon}(T)-\phi
_{xx}(\bar{X}(T))|^{2}|\xi^{1,\epsilon}(T)|^{4}+|\xi^{2,\epsilon}(T)|^{2}%
|\xi^{1,\epsilon}(T)+X_{1}(T)|^{2}\right]  \right\}  ^{\frac{1}{2}}\\
& =o(\epsilon).
\end{array}
\]
}

(2) {Since }%
\[
\mathbb{E}\left[  \int_{0}^{T}|m(t)A_{2}^{\epsilon}(t)|dt\right]
\leq\mathbb{E}\left[  \sup\limits_{t\in\lbrack0,T]}|m(t)|\int_{0}^{T}%
|A_{2}^{\epsilon}(t)|dt\right]  \leq\left\{  \mathbb{E}\left[  \sup
\limits_{t\in\lbrack0,T]}|m(t)|^{2}\right]  \right\}  ^{\frac{1}{2}}\left\{
\mathbb{E}\left[  \left(  \int_{0}^{T}|A_{2}^{\epsilon}(t)|dt\right)
^{2}\right]  \right\}  ^{\frac{1}{2}},
\]
{ then we only need to check
\begin{equation}
\mathbb{E}\left[  \left(  \int_{0}^{T}|A_{2}^{\epsilon}(t)|dt\right)
^{2}\right]  =o(\epsilon^{2}). \label{second order-part estimate}%
\end{equation}
Indeed, (\ref{second order-part estimate}) is due to the following estimates:
\[%
\begin{array}
[c]{l}%
\mathbb{E}\left[  \left(  \int_{0}^{T}|\delta b_{z}(t,\Delta)(\zeta
^{1,\epsilon}(t)-\Delta(t)I_{E_{\epsilon}}(t))|I_{E_{\epsilon}}(t)dt\right)
^{2}\right] \\
\leq\mathbb{E}\left[  \left(  \int_{E_{\epsilon}}|\delta b_{z}(t,\Delta
)|\left(  |\zeta^{2,\epsilon}(t)|+|K_{1}(t)X_{1}(t)|\right)  dt\right)
^{2}\right] \\
\leq C\mathbb{E}\left[  \left(  \int_{E_{\epsilon}}|\zeta^{2,\epsilon
}(t)|dt\right)  ^{2}\right]  +C\mathbb{E}\left[  \sup\limits_{t\in\lbrack
0,T]}|X_{1}(t)|^{2}\left(  \int_{E_{\epsilon}}|\delta b_{z}(t,\Delta
)|dt\right)  ^{2}\right] \\
\leq C\epsilon\mathbb{E}\left[  \int_{0}^{T}|\zeta^{2,\epsilon}(t)|^{2}%
dt\right]  +C\epsilon^{2}\mathbb{E}[\sup\limits_{t\in\lbrack0,T]}%
|X_{1}(t)|^{2}]\\
=o(\epsilon^{2}),
\end{array}
\]%
\begin{equation}%
\begin{array}
[c]{l}%
\mathbb{E}\left[  \left(  \int_{0}^{T}\left\vert \widetilde{b}_{zz}^{\epsilon
}(t)\left(  \zeta^{1,\epsilon}(t)-\Delta(t)I_{E_{\epsilon}}(t)\right)
^{2}-b_{zz}(t)K_{1}(t)^{2}X_{1}(t)^{2}\right\vert dt\right)  ^{2}\right] \\
\leq\mathbb{E}\left[  \left(  \int_{0}^{T}\left\vert \widetilde{b}%
_{zz}^{\epsilon}(t)\zeta^{2,\epsilon}(t)\left(  \zeta^{1,\epsilon}%
(t)-\Delta(t)I_{E_{\epsilon}}(t)+K_{1}(t)X_{1}(t)\right)  \right\vert
dt\right)  ^{2}\right] \\
\text{ \ }+\mathbb{E}\left[  \left(  \int_{0}^{T}\left\vert \left(
\widetilde{b}_{zz}^{\epsilon}(t)-b_{zz}(t)\right)  K_{1}(t)^{2}X_{1}%
(t)^{2}\right\vert dt\right)  ^{2}\right] \\
\leq C\mathbb{E}\left[  \int_{0}^{T}\left\vert \zeta^{2,\epsilon
}(t)\right\vert ^{2}dt\int_{0}^{T}\left\vert \zeta^{1,\epsilon}(t)-\Delta
(t)I_{E_{\epsilon}}(t)+K_{1}(t)X_{1}(t)\right\vert ^{2}dt\right] \\
\text{ \ }+C\mathbb{E}\left[  \sup\limits_{t\in\lbrack0,T]}|X_{1}%
(t)|^{4}\left(  \int_{0}^{T}\left\vert \left(  \widetilde{b}_{zz}^{\epsilon
}(t)-b_{zz}(t)\right)  \right\vert dt\right)  ^{2}\right] \\
=o(\epsilon^{2}),
\end{array}
\label{est-d2b}%
\end{equation}
the other terms are similar.}

(3){ }The estimate of $\mathbb{E}\left[  \int_{0}^{T}|n(t)B_{2}^{\epsilon
}(t)|dt\right]  $:
\[%
\begin{array}
[c]{ll}%
\mathbb{E}\left[  \int_{0}^{T}\left\vert n(t)\delta\sigma_{z}(t,\Delta
)\zeta^{2,\epsilon}(t)I_{E_{\epsilon}}(t)\right\vert dt\right]  & \leq
C\mathbb{E}\left[  \int_{E_{\epsilon}}|n(t)\zeta^{2,\epsilon}(t)|dt\right] \\
& \leq C\left\{  \mathbb{E}\left[  \int_{0}^{T}|\zeta^{2,\epsilon}%
(t)|^{2}dt\right]  \right\}  ^{\frac{1}{2}}\left\{  \mathbb{E}\left[
\int_{E_{\epsilon}}|n(t)|^{2}dt\right]  \right\}  ^{\frac{1}{2}}\\
& =o(\epsilon),
\end{array}
\]%
\[%
\begin{array}
[c]{l}%
\mathbb{E}\left[  \int_{0}^{T}\left\vert n(t)\right\vert \left\vert
\tilde{\sigma}_{zz}^{\epsilon}(t)\left(  \zeta^{1,\epsilon}(t)-\Delta
(t)I_{E_{\epsilon}}(t)\right)  ^{2}-\sigma_{zz}(t)K_{1}(t)^{2}X_{1}%
(t)^{2}\right\vert dt\right] \\
\leq\mathbb{E}\left[  \int_{0}^{T}\left\vert n(t)\right\vert \left\vert
\tilde{\sigma}_{zz}^{\epsilon}(t)\left(  \zeta^{1,\epsilon}(t)-\Delta
(t)I_{E_{\epsilon}}(t)+K_{1}(t)X_{1}(t)\right)  \zeta^{2,\epsilon
}(t)\right\vert dt\right] \\
\text{ \ }+\mathbb{E}\left[  \int_{0}^{T}\left\vert n(t)\right\vert \left\vert
\tilde{\sigma}_{zz}^{\epsilon}(t)-\sigma_{zz}(t)\right\vert K_{1}(t)^{2}%
X_{1}(t)^{2}dt\right] \\
\leq\mathbb{E}\left[  \int_{0}^{T}\left\vert n(t)\right\vert \left\vert
\tilde{\sigma}_{zz}^{\epsilon}(t)\left(  \zeta^{1,\epsilon}(t)-\Delta
(t)I_{E_{\epsilon}}(t)\right)  \zeta^{2,\epsilon}(t)\right\vert dt\right]
+\mathbb{E}\left[  \int_{0}^{T}\left\vert n(t)\right\vert \left\vert
\tilde{\sigma}_{zz}^{\epsilon}(t)K_{1}(t)X_{1}(t)\zeta^{2,\epsilon
}(t)\right\vert dt\right]  +o(\epsilon)\\
=\mathbb{E}\left[  \int_{0}^{T}\left\vert n(t)\right\vert \left\vert 2\int%
_{0}^{1}\theta\left[  \sigma_{z}(t,\Theta(t,\Delta I_{E_{\epsilon}}%
)+\theta(\Theta^{\epsilon}(t)-\Theta(t,\Delta I_{E_{\epsilon}})),u^{\epsilon
}(t))-\sigma_{z}(t,\Theta(t,\Delta I_{E_{\epsilon}}),u^{\epsilon}(t))\right]
d\theta\right\vert \left\vert \zeta^{2,\epsilon}(t)\right\vert dt\right] \\
\text{ \ }+\mathbb{E}\left[  \int_{0}^{T}\left\vert n(t)\right\vert \left\vert
\tilde{\sigma}_{zx}^{\epsilon}\left(  t\right)  \xi^{1,\epsilon}\left(
t\right)  +\tilde{\sigma}_{zy}^{\epsilon}\left(  t\right)  \eta^{1,\epsilon
}\left(  t\right)  \right\vert \left\vert \zeta^{2,\epsilon}(t)\right\vert
dt\right]  +C\mathbb{E}\left[  \sup\limits_{t\in\lbrack0,T]}\left\vert
X_{1}(t)\right\vert \int_{0}^{T}\left\vert n(t)K_{1}(t)\right\vert \left\vert
\zeta^{2,\epsilon}(t)\right\vert dt\right]  +o(\epsilon)\\
=o(\epsilon),
\end{array}
\]
{the other terms are similar.}

(4) {The estimate of $\mathbb{E}\left[  \int_{0}^{T}|h(t)C_{2}^{\epsilon
}(t)|dt\right]  $ is the same as $\mathbb{E}\left[  \int_{0}^{T}%
|m(t)A_{2}^{\epsilon}(t)|dt\right]  $. }

All the terms in (\ref{second order-estimate}) have been derived. Finally, we
obtain
\[
Y^{\epsilon}(0)-\bar{Y}(0)-Y_{1}(0)-Y_{2}(0)=o(\epsilon).
\]
The proof is complete.
\end{proof}

In the above lemma, we only prove $Y^{\epsilon}(0)-\bar{Y}(0)-Y_{1}%
(0)-Y_{2}(0)=o(\epsilon)$ and have not deduced
\[
\mathbb{E}[\sup\limits_{t\in\lbrack0,T]}|Y^{\epsilon}(t)-\bar{Y}%
(t)-Y_{1}(t)-Y_{2}(t)|^{2}]=o(\epsilon^{2}).
\]
The reason is
\[
\mathbb{E}\left[  \int_{0}^{T}\left\vert \tilde{\sigma}_{zz}^{\epsilon
}(t)\left(  \zeta^{1,\epsilon}(t)-\Delta(t)I_{E_{\epsilon}}(t)\right)
\right\vert ^{2}\left\vert \zeta^{2,\epsilon}(t)\right\vert ^{2}dt\right]
=o(\epsilon^{2})
\]
may be not hold. But if
\begin{equation}
\sigma(t,x,y,z,u)=A(t)z+\sigma_{1}(t,x,y,u) \label{xigma-zz}%
\end{equation}
where $A(t)$ is a bounded adapted process, then $\sigma_{zz}\equiv0.$ In this
case, we can prove the following estimate.

\begin{lemma}
\label{lemma-est-sup}Suppose Assumptions \ref{assum-2}, \ref{assum-3},
\ref{assm-q-bound} and $\sigma(t,x,y,z,u)=A(t)z+$ $\sigma_{1}(t,x,y,u)$ where
$A(t)$ is a bounded adapted process. Then%
\[%
\begin{array}
[c]{rl}%
\mathbb{E}\left[  \sup\limits_{t\in\lbrack0,T]}|X^{\epsilon}(t)-\bar
{X}(t)-X_{1}(t)-X_{2}(t)|^{2}\right]  & =o(\epsilon^{2}),\\
\mathbb{E}\left[  \sup\limits_{t\in\lbrack0,T]}|Y^{\epsilon}(t)-\bar
{Y}(t)-Y_{1}(t)-Y_{2}(t)|^{2}+\int_{0}^{T}|Z^{\epsilon}(t)-\bar{Z}%
(t)-Z_{1}(t)-Z_{2}(t)|^{2}dt\right]  & =o(\epsilon^{2}).
\end{array}
\]

\end{lemma}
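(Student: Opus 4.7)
The plan is to apply Theorem~\ref{est-fbsde-lp} directly to the linear fully coupled FBSDE \eqref{x-x1-x2}--\eqref{y-y1-y2} satisfied by the third-order remainders $(\xi^{3,\epsilon},\eta^{3,\epsilon},\zeta^{3,\epsilon})$, rather than through the duality with $(h,m,n)$ as in Lemma~\ref{est-second-order} (the duality argument only produces a point estimate at $t=0$ and cannot yield a sup-norm bound). Since the homogeneous coefficients $b_x,b_y,b_z,\sigma_x,\sigma_y,\sigma_z,g_x,g_y,g_z$ are bounded and $\Lambda_{2}<1$ holds by Assumption~\ref{assum-2}(ii), Theorem~\ref{est-fbsde-lp} applied with $p=2$ gives
\[
\mathbb{E}\Bigl[\sup_{t\in[0,T]}(|\xi^{3,\epsilon}(t)|^{2}+|\eta^{3,\epsilon}(t)|^{2})+\int_{0}^{T}|\zeta^{3,\epsilon}(t)|^{2}\,dt\Bigr]\leq C\,\mathbb{E}\Bigl[\bigl(\int_{0}^{T}(|A_{2}^{\epsilon}|+|C_{2}^{\epsilon}|)dt\bigr)^{2}+\int_{0}^{T}|B_{2}^{\epsilon}|^{2}dt+|D_{2}^{\epsilon}(T)|^{2}\Bigr],
\]
so it suffices to show that each of the four inhomogeneous contributions on the right is $o(\epsilon^{2})$.

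The three terms $|D_{2}^{\epsilon}(T)|^{2}$, $(\int_{0}^{T}|A_{2}^{\epsilon}|dt)^{2}$ and $(\int_{0}^{T}|C_{2}^{\epsilon}|dt)^{2}$ are handled by essentially the same computations already performed in Lemma~\ref{est-second-order}: the continuity and boundedness of the second derivatives (Assumption~\ref{assum-2}(iii)), combined with the $O(\sqrt\epsilon)$ estimates of Lemma~\ref{est-epsilon-bar} and the $o(\epsilon^{2})$ estimates of Lemma~\ref{est-one-order} for $\xi^{2,\epsilon},\eta^{2,\epsilon},\zeta^{2,\epsilon}$, plus Cauchy--Schwarz and dominated convergence. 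No new idea is required because those calculations were already $L^{2}$-in-$dt$ estimates; the duality against the bounded processes $h,m$ in Lemma~\ref{est-second-order} merely moved them from one side to the other.

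The genuinely new point, and the main obstacle, is the estimate of $\mathbb{E}[\int_{0}^{T}|B_{2}^{\epsilon}|^{2}dt]$, which is precisely the quantity the authors flag in the paragraph preceding the lemma as obstructing a sup-norm bound in the general case. Here I use the structural assumption $\sigma(t,x,y,z,u)=A(t)z+\sigma_{1}(t,x,y,u)$ decisively: it forces $\sigma_{zz}\equiv\sigma_{xz}\equiv\sigma_{yz}\equiv 0$ and $\delta\sigma_{z}(t,\Delta)\equiv 0$. Consequently the spike contribution $\delta\sigma_{z}(t,\Delta)\zeta^{2,\epsilon}I_{E_{\epsilon}}$ vanishes, and each of the two quadratic forms in $B_{2}^{\epsilon}$ collapses to one involving only $(\xi^{1,\epsilon},\eta^{1,\epsilon})$ or $(X_{1},Y_{1})$, with no $\zeta^{1,\epsilon}$ or $\Delta I_{E_{\epsilon}}$ component. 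This removes the troublesome cross-term $\widetilde\sigma_{zz}^{\epsilon}(\zeta^{1,\epsilon}-\Delta I_{E_{\epsilon}})\zeta^{2,\epsilon}$, whose square the authors point out cannot be bounded by $o(\epsilon^{2})$ in general.

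With $B_{2}^{\epsilon}$ thus reduced, I split it into the spike part $[\delta\sigma_{1,x}(t)\xi^{2,\epsilon}+\delta\sigma_{1,y}(t)\eta^{2,\epsilon}]I_{E_{\epsilon}}(t)$, whose squared integral is bounded by $\epsilon\cdot\mathbb{E}[\sup_{t}(|\xi^{2,\epsilon}|^{2}+|\eta^{2,\epsilon}|^{2})]=o(\epsilon^{2})$ via Lemma~\ref{est-one-order}, and the quadratic part $\tfrac{1}{2}[\xi^{1,\epsilon},\eta^{1,\epsilon}]\widetilde{D^{2}\sigma_{1}^{\epsilon}}(t)[\xi^{1,\epsilon},\eta^{1,\epsilon}]^{\intercal}-\tfrac{1}{2}[X_{1},Y_{1}]D^{2}\sigma_{1}(t)[X_{1},Y_{1}]^{\intercal}$, which is handled by writing it as (a bounded, $(x,y)$-continuous coefficient difference tending to zero in probability) times (quadratic expressions in $\xi^{1,\epsilon},\eta^{1,\epsilon},X_{1},Y_{1}$), then applying Cauchy--Schwarz together with the $O(\epsilon^{2})$ fourth-moment bounds of Lemmas~\ref{est-epsilon-bar} and~\ref{est-one-order} and dominated convergence to extract the $o(1)$ factor. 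Summing the four contributions yields the claimed conclusion.
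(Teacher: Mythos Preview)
Your proposal is correct and follows essentially the same route as the paper's proof: apply Theorem~\ref{est-fbsde-lp} with $p=2$ to the remainder FBSDE \eqref{x-x1-x2}--\eqref{y-y1-y2}, note that the estimates $\mathbb{E}\bigl[(\int_{0}^{T}|A_{2}^{\epsilon}|dt)^{2}\bigr]$, $\mathbb{E}\bigl[(\int_{0}^{T}|C_{2}^{\epsilon}|dt)^{2}\bigr]$ and $\mathbb{E}[|D_{2}^{\epsilon}(T)|^{2}]$ were already shown to be $o(\epsilon^{2})$ inside the proof of Lemma~\ref{est-second-order}, and use the affine structure $\sigma=A(t)z+\sigma_{1}(t,x,y,u)$ to reduce $B_{2}^{\epsilon}$ to the purely $(x,y)$-quadratic and spike-in-$(x,y)$ pieces before bounding $\mathbb{E}[\int_{0}^{T}|B_{2}^{\epsilon}|^{2}dt]=o(\epsilon^{2})$. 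The paper carries out exactly these steps, with the same splitting of $B_{2}^{\epsilon}$ and the same use of the $O(\epsilon^{2})$ sup-bounds on $\xi^{2,\epsilon},\eta^{2,\epsilon}$ from Lemma~\ref{est-one-order}.
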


\begin{proof}
We use all notations in Lemma \ref{est-second-order}. By Theorem
\ref{est-fbsde-lp}, we have%
\[%
\begin{array}
[c]{l}%
\mathbb{E}\left[  \sup\limits_{t\in\lbrack0,T]}(|\xi^{3,\epsilon}%
(t)|^{2}+|\eta^{3,\epsilon}(t)|^{2})+\int_{0}^{T}|\zeta^{3,\epsilon}%
(t)|^{2}dt\right] \\
\leq C\mathbb{E}\left[  \left(  \int_{0}^{T}|A_{2}^{\epsilon}(t)|dt\right)
^{2}+\left(  \int_{0}^{T}|C_{2}^{\epsilon}(t)|dt\right)  ^{2}+\int_{0}%
^{T}|B_{2}^{\epsilon}(t)|^{2}dt+|D_{2}^{\epsilon}(T)|^{2}\right]  ,
\end{array}
\]
where $A_{2}^{\epsilon}(\cdot)$, $C_{2}^{\epsilon}(\cdot)$, $D_{2}^{\epsilon
}(T)$ are the same as Lemma \ref{est-second-order}, and
\[%
\begin{array}
[c]{ll}%
B_{2}^{\epsilon}(t)= & \left[  \delta\sigma_{x}(t)\xi^{2,\epsilon}%
(t)+\delta\sigma_{y}(t)\eta^{2,\epsilon}(t)\right]  I_{E_{\epsilon}}%
(t)+\frac{1}{2}\left[  \xi^{1,\epsilon}(t),\eta^{1,\epsilon}(t)\right]
\widetilde{D^{2}\sigma^{\epsilon}}(t)\left[  \xi^{1,\epsilon}(t),\eta
^{1,\epsilon}(t)\right]  ^{\intercal}\\
& -\frac{1}{2}\left[  X_{1}(t),Y_{1}(t)\right]  D^{2}\sigma(t)\left[
X_{1}(t),Y_{1}(t)\right]  ^{\intercal}.
\end{array}
\]
In Lemma \ref{est-second-order}, we have proved
\[
\mathbb{E}\left[  \left(  \int_{0}^{T}|A_{2}^{\epsilon}(t)|dt\right)
^{2}+\left(  \int_{0}^{T}|C_{2}^{\epsilon}(t)|dt\right)  ^{2}+|D_{2}%
^{\epsilon}(T)|^{2}\right]  =o(\epsilon^{2}).
\]
Now we just need to check $\mathbb{E}\left[  \int_{0}^{T}|B_{2}^{\epsilon
}(t)|^{2}dt\right]  =o\left(  \epsilon^{2}\right)  $ as follows.%
\begin{equation}%
\begin{array}
[c]{l}%
\mathbb{E}\left[  \int_{0}^{T}\left\vert \delta\sigma_{x}(t)\xi^{2,\epsilon
}(t)\right\vert ^{2}I_{E_{\epsilon}}(t)dt\right]  \leq\mathbb{E}\left[
\sup\limits_{t\in\lbrack0,T]}\left\vert \xi^{2,\epsilon}(t)\right\vert
^{2}\int_{E_{\epsilon}}\left\vert \delta\sigma_{x}(t)\right\vert
^{2}dt\right]  =o(\epsilon^{2}).
\end{array}
\label{est-sigmax-x-x1}%
\end{equation}
The estimate of $\mathbb{E}\left[  \int_{0}^{T}\left\vert \delta\sigma
_{y}(t)\eta^{2,\epsilon}(t)\right\vert ^{2}dt\right]  $ is same to
\eqref{est-sigmax-x-x1}, and%
\[%
\begin{array}
[c]{l}%
\mathbb{E}\left[  \int_{0}^{T}\left\vert \tilde{\sigma}_{yy}^{\epsilon}%
(t)\eta^{1,\epsilon}(t)^{2}-\sigma_{yy}(t)Y_{1}(t)^{2}\right\vert
^{2}dt\right]  \ \ \\
\leq\mathbb{E}\left[  \int_{0}^{T}\left\vert \tilde{\sigma}_{yy}^{\epsilon
}(t)\eta^{2,\epsilon}(t)(\eta^{1,\epsilon}(t)+Y_{1}(t))\right\vert
^{2}dt\right]  +\mathbb{E}\left[  \int_{0}^{T}\left\vert \tilde{\sigma}%
_{yy}^{\epsilon}(t)-\sigma_{yy}(t)\right\vert ^{2}Y_{1}(t)^{4}dt\right] \\
\leq C\mathbb{E}\left[  \int_{0}^{T}\left\vert \eta^{2,\epsilon}(t)\right\vert
^{2}\left\vert \eta^{1,\epsilon}(t)+Y_{1}(t)\right\vert ^{2}dt\right]
+\mathbb{E}\left[  \sup\limits_{t\in\lbrack0,T]}\left\vert Y_{1}(t)\right\vert
^{4}\int_{0}^{T}\left\vert \tilde{\sigma}_{yy}^{\epsilon}(t)-\sigma
_{yy}(t)\right\vert ^{2}dt\right] \\
=o(\epsilon^{2}).
\end{array}
\]
Other terms are similar.
\end{proof}

\subsection{Maximum principle}

\label{section-mp}Note that $Y_{1}(0)=0$, by Lemma \ref{est-second-order}, we
have
\[
J(u^{\epsilon}(\cdot))-J(\bar{u}(\cdot))=Y^{\epsilon}(0)-\bar{Y}%
(0)=Y_{2}(0)+o(\epsilon).
\]
In order to obtain $Y_{2}(0)$, we introduce the following second-order adjoint
equation:
\begin{equation}
\left\{
\begin{array}
[c]{rl}%
-dP(t)= & \left\{  P(t)\left[  (D\sigma(t)^{\intercal}[1,p(t),K_{1}%
(t)]^{\intercal})^{2}+2Db(t)^{\intercal}[1,p(t),K_{1}(t)]^{\intercal}%
+H_{y}(t)\right]  \right. \\
& +2Q(t)D\sigma(t)^{\intercal}[1,p(t),K_{1}(t)]^{\intercal}+\left[
1,p(t),K_{1}(t)\right]  D^{2}H(t)\left[  1,p(t),K_{1}(t)\right]  ^{\intercal
}\left.  +H_{z}(t)K_{2}(t)\right\}  dt\\
& -Q(t)dB(t),\\
P(T)= & \phi_{xx}(\bar{X}(T)),
\end{array}
\right.  \label{eq-P}%
\end{equation}
where
\[%
\begin{array}
[c]{ll}%
H(t,x,y,z,u,p,q)= & g(t,x,y,z,u)+pb(t,x,y,z,u)+q\sigma(t,x,y,z,u),
\end{array}
\]%
\[%
\begin{array}
[c]{ll}%
K_{2}(t)= & (1-p(t)\sigma_{z}(t))^{-1}\left\{  p(t)\sigma_{y}(t)+2\left[
\sigma_{x}(t)+\sigma_{y}(t)p(t)+\sigma_{z}(t)K_{1}(t)\right]  \right\}  P(t)\\
& +(1-p(t)\sigma_{z}(t))^{-1}\left\{  Q(t)+p(t)[1,p(t),K_{1}(t)]D^{2}%
\sigma(t)[1,p(t),K_{1}(t)]^{\intercal}\right\}  ,
\end{array}
\]
and $DH(t)$, $D^{2}H(t)$ are defined similar to $D\psi$ and $D^{2}\psi$.

Note that (\ref{eq-P}) is a linear BSDE with uniformly Lipschitz continuous
coefficients, then it has a unique solution. Before we deduce the relationship
between $X_{2}(\cdot)$ and $(Y_{2}(\cdot),Z_{2}(\cdot))$, we introduce the
following equation:
\begin{equation}%
\begin{array}
[c]{l}%
\hat{Y}(t)=\int_{t}^{T}\left\{  (H_{y}(s)+\sigma_{y}(s)g_{z}%
(s)p(s)(1-p(s)\sigma_{z}(s))^{-1})\hat{Y}(s)+\left(  H_{z}(s)+\sigma
_{z}(s)g_{z}(s)p(s)(1-p(s)\sigma_{z}(s))^{-1}\right)  \hat{Z}(s)\right. \\
\ \ \ \ \ \ \ \ \ \ \left.  +\left[  \delta H(s,\Delta)+\frac{1}{2}%
P(s)\delta\sigma(s,\Delta)^{2}\right]  I_{E_{\epsilon}}(s)\right\}
ds-\int_{t}^{T}\hat{Z}(s)dB(s),
\end{array}
\label{eq-y-hat}%
\end{equation}
where $\delta H(s,\Delta):=p(s)\delta b(s,\Delta)+q(s)\delta\sigma
(s,\Delta)+\delta g(s,\Delta)$. It is also a linear BSDE and has a unique solution.

\begin{lemma}
\label{relation-y2} Suppose Assumptions \ref{assum-2}, \ref{assum-3} and.
\ref{assm-q-bound} hold. Then we have%
\[%
\begin{array}
[c]{rl}%
Y_{2}(t) & =p(t)X_{2}(t)+\frac{1}{2}P(t)X_{1}(t)^{2}+\hat{Y}(t),\\
Z_{2}(t) & =\mathbf{I(t)}+\hat{Z}(t),
\end{array}
\]
where $(\hat{Y}(\cdot),\hat{Z}(\cdot))$ is the solution to \eqref{eq-y-hat}
and
\begin{align*}
\mathbf{I(t)}  &  =K_{1}(t)X_{2}(t)+\frac{1}{2}K_{2}(t)X_{1}^{2}%
(t)+(1-p(t)\sigma_{z}(t))^{-1}p(t)(\sigma_{y}(t)\hat{Y}(t)+\sigma_{z}%
(t)\hat{Z}(t))+P(t)\delta\sigma(t,\Delta)X_{1}(t)I_{E_{\epsilon}}(t)\\
&  \;+(1-p(t)\sigma_{z}(t))^{-1}p(t)\left[  \delta\sigma_{x}(t,\Delta
)X_{1}(t)+\delta\sigma_{y}(t,\Delta)p(t)X_{1}(t)+\delta\sigma_{z}%
(t,\Delta)K_{1}(t)X_{1}(t)\right]  I_{E_{\epsilon}}(t).
\end{align*}

\end{lemma}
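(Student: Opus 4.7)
The strategy is a verification argument. Define
\[
\tilde Y(t) := p(t) X_2(t) + \tfrac12 P(t) X_1(t)^2 + \hat Y(t),
\]
and let $\tilde Z(t)$ denote the $dB$-coefficient produced by applying It\^{o}'s formula to $\tilde Y(t)$. If I can verify that (a) $\tilde Y(T)$ equals the terminal condition in (\ref{new-form-y2}), (b) $\tilde Z(t) = \mathbf{I}(t) + \hat Z(t)$, and (c) the drift of $d\tilde Y(t)$ coincides with the drift in (\ref{new-form-y2}) with $(Y_2,Z_2)$ replaced by $(\tilde Y,\tilde Z)$, then $(\tilde Y,\tilde Z)$ solves the same linear BSDE driven by $X_2$ as $(Y_2,Z_2)$, and uniqueness from Theorem \ref{est-fbsde-lp} gives the identification. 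Step (a) is immediate from $p(T)=\phi_x(\bar X(T))$, $P(T)=\phi_{xx}(\bar X(T))$, and $\hat Y(T)=0$.

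For the diffusion identification (b), collecting martingale parts of $d(pX_2)$, $d(\tfrac12 P X_1^2)$, and $d\hat Y$ yields
\[
\tilde Z(t) = p(t)\,\sigma_{X_2}(t) + q(t)X_2(t) + P(t)X_1(t)\,\sigma_{X_1}(t) + \tfrac12 Q(t) X_1(t)^2 + \hat Z(t),
\]
where $\sigma_{X_i}(t)$ abbreviates the $dB$-coefficient of $X_i$. Using $Y_1 = pX_1$ and $Z_1 - \Delta I_{E_{\epsilon}} = K_1 X_1$ from Lemma \ref{lemma-y1}, and bootstrapping $Y_2 = \tilde Y$ so that $\sigma_{X_2}$ contains the term $\sigma_z(t)\tilde Z(t)$, one gathers the $\tilde Z$ pieces on the left to obtain an equation of the form $(1-p\sigma_z)\tilde Z = \mathrm{RHS}$. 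The coefficient of $X_2$ on the RHS is $p\sigma_x + p^2\sigma_y + q$, which by (\ref{def-k1}) equals $(1-p\sigma_z)K_1$; the coefficient of $X_1^2$ works out to $\tfrac12 (1-p\sigma_z)K_2$ by the very definition of $K_2$ in (\ref{eq-P}); and the remaining $\hat Y$, $\hat Z$, and spike-variation pieces reshuffle into the displayed expression for $\mathbf{I}(t)+\hat Z(t)$, using the identity $(1-p\sigma_z)^{-1} = 1 + (1-p\sigma_z)^{-1}p\sigma_z$.

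Step (c) is the main obstacle. Expanding the three differentials produces heterogeneous terms grouped by $X_2$, $X_1^2$, $X_1 I_{E_{\epsilon}}$, pure $I_{E_{\epsilon}}$, $\hat Y$, and $\hat Z$. Substituting the drift $-\Upsilon$ from (\ref{eq-p}) cancels the $X_2$ contributions against the Hamiltonian-derivative terms of the target drift after $Y_2 = \tilde Y$ and $Z_2 = \tilde Z$ are inserted. The $X_1^2$ contributions vanish precisely by the design of (\ref{eq-P}): its coefficients $(D\sigma(t)^{\intercal}[1,p(t),K_1(t)]^{\intercal})^2 + 2Db(t)^{\intercal}[1,p(t),K_1(t)]^{\intercal} + H_y(t)$ and $[1,p(t),K_1(t)]D^2H(t)[1,p(t),K_1(t)]^{\intercal}$ are tuned to absorb the quadratic-in-$X_1$ terms generated by the $dX_1$, $d[X_1,X_1]$, and cross-variation pieces, while the last term $H_z(t)K_2(t)$ matches the $K_2$-part of $\tilde Z$. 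Finally, the drift of $\hat Y$ in (\ref{eq-y-hat}) absorbs the remaining spike-variation terms on $I_{E_{\epsilon}}$, with the algebra equation $\Delta(t) = p(t)\delta\sigma(t,\Delta)$ used to convert the quadratic $\delta\sigma(t,\Delta)^2$ and mixed $q\,\delta\sigma(t,\Delta)$ terms into the required form. Once (c) is verified, uniqueness of the fully coupled FBSDE (\ref{new-form-x2})--(\ref{new-form-y2}) furnished by Theorem \ref{est-fbsde-lp} concludes $(\tilde Y,\tilde Z) = (Y_2,Z_2)$.
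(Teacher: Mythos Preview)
Your approach is essentially the same as the paper's: a verification-by-It\^{o} argument followed by an appeal to uniqueness. The paper's own proof consists of the single sentence ``Using the same method as in Lemma~\ref{lemma-y1}, we can deduce the above relationship similarly,'' so your term-matching is in fact more detailed than what the authors supply.

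There is, however, a circularity in your logical structure that you should repair. You define $\tilde Y$ using the \emph{actual} $X_2$ from the coupled FBSDE, and when you compute $d\tilde Y$ you encounter the drift and diffusion of $X_2$, which contain $b_y(t)Y_2(t)+b_z(t)Z_2(t)$ and $\sigma_y(t)Y_2(t)+\sigma_z(t)Z_2(t)$ --- with the genuine $(Y_2,Z_2)$, not $(\tilde Y,\tilde Z)$. Your phrase ``bootstrapping $Y_2=\tilde Y$ so that $\sigma_{X_2}$ contains the term $\sigma_z(t)\tilde Z(t)$'' therefore assumes precisely what you are trying to prove. The clean fix, which is exactly what the paper does in Lemma~\ref{lemma-y1}, is to first introduce an independent process $\tilde X_2$ via the \emph{decoupled} forward SDE obtained from (\ref{new-form-x2}) by substituting the ansatz $Y_2\leadsto p\tilde X_2+\tfrac12 P X_1^2+\hat Y$ and $Z_2\leadsto \mathbf I(t)|_{X_2=\tilde X_2}+\hat Z$; then set $\tilde Y_2$ and $\tilde Z_2$ accordingly; and finally verify by It\^{o} that the triple $(\tilde X_2,\tilde Y_2,\tilde Z_2)$ solves the full coupled system (\ref{new-form-x2})--(\ref{new-form-y2}). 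Uniqueness from Theorem~\ref{est-fbsde-lp} then yields $(\tilde X_2,\tilde Y_2,\tilde Z_2)=(X_2,Y_2,Z_2)$. Your computations for steps~(b) and~(c) carry over verbatim to this setup, so no additional work is needed beyond reorganising the argument.
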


\begin{proof}
Using the same method as in Lemma \ref{lemma-y1}, we can deduce the above
relationship similarly.
\end{proof}

Consider the following equation:
\begin{equation}
\left\{
\begin{array}
[c]{rl}%
d\gamma(t)= & \gamma(t)\left[  H_{y}(t)+p(t)g_{z}(t)(1-p(t)\sigma_{z}%
(t))^{-1}\sigma_{y}(t)\right]  dt\\
& +\gamma(t)\left[  H_{z}(t)+p(t)(1-p(t)\sigma_{z}(t))^{-1}\sigma_{z}%
(t)g_{z}(t)\right]  dB(t),\\
\gamma(0)= & 1.
\end{array}
\right.  \label{eq-gamma}%
\end{equation}
Applying It\^{o}'s formula to $\gamma(t)\hat{Y}(t)$, we obtain
\[%
\begin{array}
[c]{rl}%
\hat{Y}(0)= & \mathbb{E}\left\{  \int_{0}^{T}\gamma(t)\left[  \delta
H(t,\Delta)+\frac{1}{2}P(t)\delta\sigma(t,\Delta)^{2}\right]  I_{E_{\epsilon}%
}(t)dt\right\}  .
\end{array}
\]
Define
\begin{equation}%
\begin{array}
[c]{ll}%
\mathcal{H}(t,x,y,z,u,p,q,P)= & pb(t,x,y,z+\Delta(t),u)+q\sigma(t,x,y,z+\Delta
(t),u)\\
& +\frac{1}{2}P(\sigma(t,x,y,z+\Delta(t),u)-\sigma(t,\bar{X}(t),\bar
{Y}(t),\bar{Z}(t),\bar{u}(t)))^{2}\ +g(t,x,y,z+\Delta(t),u),
\end{array}
\label{def-H}%
\end{equation}
where $\Delta(t)$ is defined in (\ref{def-delt}) corresponding to $u(t)=u$. It
is easy to check that
\begin{align*}
&  \delta H(t,\Delta)+\frac{1}{2}P(t)\delta\sigma(t,\Delta)^{2}\\
&  =\mathcal{H}(t,\bar{X}(t),\bar{Y}(t),\bar{Z}%
(t),u(t),p(t),q(t),P(t))-\mathcal{H}(t,\bar{X}(t),\bar{Y}(t),\bar{Z}%
(t),\bar{u}(t),p(t),q(t),P(t)).
\end{align*}
Noting that $\gamma(t)>0$ for $t\in\lbrack0,T]$, then we obtain the following
maximum principle.

\begin{theorem}
\label{Th-MP}Suppose Assumptions \ref{assum-2}, \ref{assum-3} and
\ref{assm-q-bound} hold. Let $\bar{u}(\cdot)\in\mathcal{U}[0,T]$ be optimal
and $(\bar{X}(\cdot),\bar{Y}(\cdot),\bar{Z}(\cdot))$ be the corresponding
state processes of (\ref{state-eq}). Then the following stochastic maximum
principle holds:
\begin{equation}
\mathcal{H}(t,\bar{X}(t),\bar{Y}(t),\bar{Z}(t),u,p(t),q(t),P(t))\geq
\mathcal{H}(t,\bar{X}(t),\bar{Y}(t),\bar{Z}(t),\bar{u}%
(t),p(t),q(t),P(t)),\ \ \ \forall u\in U,\ a.e.,\ a.s., \label{mp-1}%
\end{equation}
where $(p\left(  \cdot\right)  ,q\left(  \cdot\right)  )$, $\left(  P\left(
\cdot\right)  ,Q\left(  \cdot\right)  \right)  $ satisfy (\ref{eq-p}),
(\ref{eq-P}) respectively, and $\Delta(\cdot)$ satisfies (\ref{def-delt}).
\end{theorem}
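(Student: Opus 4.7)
The statement is the variational inequality that falls out of the optimality of $\bar{u}(\cdot)$ once all the machinery already built in the preceding lemmas is assembled. My plan is to combine the expansion estimates with the duality identity for $\hat Y(0)$ and then localize via Lebesgue differentiation.

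First, since $\bar{u}(\cdot)$ is optimal, for every $u(\cdot)\in\mathcal{U}[0,T]$ and every measurable $E_\epsilon\subset[0,T]$ with $|E_\epsilon|=\epsilon$, one has $J(u^\epsilon(\cdot))-J(\bar{u}(\cdot))=Y^\epsilon(0)-\bar Y(0)\geq 0$. Lemma \ref{est-second-order} rewrites this as $Y_1(0)+Y_2(0)+o(\epsilon)\geq 0$. By Lemma \ref{lemma-y1}, $Y_1(0)=p(0)X_1(0)=0$, and by Lemma \ref{relation-y2} together with $X_1(0)=X_2(0)=0$, $Y_2(0)=\hat Y(0)$. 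Hence
\begin{equation*}
\hat Y(0)+o(\epsilon)\geq 0.
\end{equation*}
Applying It\^o's formula to $\gamma(t)\hat Y(t)$ with $\gamma(\cdot)$ given by \eqref{eq-gamma} produces exact cancellation of the drift terms containing $H_y,H_z,\sigma_y,\sigma_z,g_z$, so that only the spike-supported term survives, yielding the identity already recorded above the theorem,
\begin{equation*}
\hat Y(0)=\mathbb{E}\!\left[\int_0^T\gamma(t)\!\left(\delta H(t,\Delta)+\tfrac12 P(t)\,\delta\sigma(t,\Delta)^2\right) I_{E_\epsilon}(t)\,dt\right].
\end{equation*}
By the definition (\ref{def-H}) of $\mathcal{H}$ and the algebraic identity $\delta H(t,\Delta)+\tfrac12P(t)\delta\sigma(t,\Delta)^2=\mathcal{H}(t,\bar X(t),\bar Y(t),\bar Z(t),u(t),p(t),q(t),P(t))-\mathcal{H}(t,\bar X(t),\bar Y(t),\bar Z(t),\bar u(t),p(t),q(t),P(t))$, this rewrites as
\begin{equation*}
\mathbb{E}\!\left[\int_{E_\epsilon}\!\gamma(t)\bigl[\mathcal{H}(t,\bar X,\bar Y,\bar Z,u(t),p,q,P)-\mathcal{H}(t,\bar X,\bar Y,\bar Z,\bar u(t),p,q,P)\bigr]\,dt\right]\geq -o(\epsilon).
\end{equation*}

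Finally, I would localize in time: for any admissible $u(\cdot)$ and any Lebesgue point $s\in[0,T)$ of the integrand, taking $E_\epsilon=[s,s+\epsilon]$, dividing by $\epsilon$, and letting $\epsilon\downarrow 0$ gives $\mathbb{E}[\gamma(s)(\mathcal{H}(s,\cdot,u(s),\cdot)-\mathcal{H}(s,\cdot,\bar u(s),\cdot))]\geq 0$ for a.e.\ $s$. Since $\gamma(\cdot)$ is the exponential-type solution of a linear SDE with bounded coefficients, $\gamma(t)>0$ a.s., so the inequality can be promoted to a pathwise one by localizing further with $E_\epsilon\cap A$ for arbitrary $A\in\mathcal{F}_s$, yielding the inequality $P$-a.s. pointwise in $s$. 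The remaining step, which I expect to be the main obstacle, is to pass from ``for every admissible control process $u(\cdot)$'' to ``for every deterministic $u\in U$, $dt\otimes dP$-a.e.'' This is handled by choosing a countable dense family $\{u_n\}\subset U$, applying the inequality to each constant control $u(\cdot)\equiv u_n$ (first exhibiting it as a well-defined admissible control via approximation), taking the intersection of the resulting full-measure sets, and finally invoking continuity of $\mathcal{H}$ in $u$ guaranteed by Assumption \ref{assum-2}(i)(iii) to cover all $u\in U$. This yields precisely \eqref{mp-1}.
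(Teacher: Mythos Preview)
Your proposal is correct and follows essentially the same route as the paper: optimality gives $Y^\epsilon(0)-\bar Y(0)\ge 0$, Lemmas \ref{est-second-order}, \ref{lemma-y1}, \ref{relation-y2} reduce this to $\hat Y(0)+o(\epsilon)\ge 0$, It\^o's formula applied to $\gamma(t)\hat Y(t)$ yields the integral representation, and positivity of $\gamma$ plus the standard localization argument finishes. The paper in fact stops at ``Noting that $\gamma(t)>0$ for $t\in[0,T]$, then we obtain the following maximum principle'' and omits the Lebesgue-point/countable-density details you spell out, so your write-up is more thorough on that last step. One small notational slip: $E_\epsilon\subset[0,T]$ while $A\in\mathcal F_s$ lives in $\Omega$, so ``$E_\epsilon\cap A$'' is ill-typed; what you mean is to perturb with the admissible control $u(t,\omega)=v\,I_A(\omega)+\bar u(t,\omega)I_{A^c}(\omega)$ on $E_\epsilon=[s,s+\epsilon]$, which is adapted once $A\in\mathcal F_s$.
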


\begin{remark}
If $b$ and $\sigma$ are independent of $y$ and $z$, then Theorem \ref{Th-MP}
degenerates to the maximum principle obtained in \cite{Hu17}.
\end{remark}

\begin{corollary}
\label{cor-mp-convex}Under the same assumptions as in Theorem \ref{Th-MP}.
Moreover, suppose that $b$, $\sigma$, $g$ are continuously differentiable with
respect to $u$ and $U$ is a convex set. Then
\begin{equation}
\Delta_{u}(t)|_{u=\bar{u}(t)}=\frac{p(t)\sigma_{u}(t)}{1-p(t)\sigma_{z}(t)}
\label{delt-convex}%
\end{equation}
and the maximum principle is
\begin{equation}
\mathcal{H}_{u}(t,\bar{X}(t),\bar{Y}(t),\bar{Z}(t),\bar{u}(t),p(t),q(t))\cdot
(u-\bar{u}(t))\geq0\ \ \ \forall u\in U,\ a.e.,\ a.s. \label{hamil-convex}%
\end{equation}
with%
\[%
\begin{array}
[c]{l}%
\mathcal{H}_{u}(t,\bar{X}(t),\bar{Y}(t),\bar{Z}(t),\bar{u}(t),p(t),q(t))\\
=p(t)b_{u}(t)+q(t)\sigma_{u}(t)+g_{u}(t)+(p(t)b_{z}(t)+q(t)\sigma_{z}%
(t)+g_{z}(t))\frac{p(t)\sigma_{u}(t)}{1-p(t)\sigma_{z}(t)}.
\end{array}
\]

\end{corollary}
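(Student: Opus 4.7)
The plan has two parts, corresponding to the two claims in the corollary. First, I would derive the formula for $\Delta_u$ by implicit differentiation of the algebra equation (\ref{def-delt}). At $u=\bar u(t)$ one checks immediately that $\Delta(t)=0$ solves (\ref{def-delt}), and by uniqueness (Lemma \ref{delta-exist}) this is the only solution. Since $\sigma$ is $C^1$ in $(z,u)$ and the coefficient $1-p(t)\sigma_z(t)\geq\beta_0>0$ by Assumption \ref{assum-3}, the implicit function theorem applies to (\ref{def-delt}) in a neighborhood of $u=\bar u(t)$. Differentiating both sides of (\ref{def-delt}) in $u$ at $u=\bar u(t)$ and using $\Delta(t)=0$ gives
\[
\Delta_u(t)\big|_{u=\bar u(t)}=p(t)\bigl[\sigma_z(t)\,\Delta_u(t)\big|_{u=\bar u(t)}+\sigma_u(t)\bigr],
\]
which upon solving produces exactly (\ref{delt-convex}).

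Next, I would deduce the variational inequality (\ref{hamil-convex}) from the global maximum principle of Theorem \ref{Th-MP}. Since $U$ is convex, for any $u\in U$ and $\lambda\in(0,1]$ the process $u^{\lambda}(t):=\bar u(t)+\lambda(u-\bar u(t))$ is admissible. Substituting $u^{\lambda}(t)$ into (\ref{mp-1}), dividing by $\lambda$, and passing to $\lambda\downarrow 0$ turns the pointwise inequality into the first-order condition
\[
\mathcal{H}_u\bigl(t,\bar X(t),\bar Y(t),\bar Z(t),\bar u(t),p(t),q(t),P(t)\bigr)\cdot\bigl(u-\bar u(t)\bigr)\geq 0,\quad\forall u\in U.
\]
The justification of this limit uses the $C^1$ regularity of $b,\sigma,g$ in $(z,u)$, Assumption \ref{assum-2}(iii), and the smoothness of $\Delta$ in $u$ established in step one.

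Finally, I would compute $\mathcal{H}_u$ explicitly at $u=\bar u(t)$. Recall the definition (\ref{def-H}); differentiating in $u$ through the chain rule and remembering that $\Delta=\Delta(t;u)$ depends on $u$ gives
\[
\mathcal{H}_u=(pb_z+q\sigma_z+g_z)\,\Delta_u+pb_u+q\sigma_u+g_u+P\bigl(\sigma(\cdots,z+\Delta,u)-\sigma(t)\bigr)\bigl(\sigma_z\Delta_u+\sigma_u\bigr).
\]
Evaluating at $u=\bar u(t)$ kills the $P$-term, because $\Delta(t)=0$ there makes $\sigma(\cdots,z+\Delta,u)-\sigma(t)=0$. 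This removal of the quadratic $P$-term at the base point is the delicate observation, and it is what allows the corollary to involve only $(p,q)$ and not $(P,Q)$. Substituting the value of $\Delta_u$ from (\ref{delt-convex}) yields precisely the expression for $\mathcal{H}_u$ stated in the corollary, completing the proof.
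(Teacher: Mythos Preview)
Your proof is correct and follows essentially the same approach as the paper: apply the implicit function theorem to the algebra equation (\ref{def-delt}) to obtain (\ref{delt-convex}), then substitute the convex perturbation $u_\rho(t)=\bar u(t)+\rho(u-\bar u(t))$ into (\ref{mp-1}), divide by $\rho$, and let $\rho\downarrow 0$. Your write-up in fact supplies more detail than the paper's, including the explicit chain-rule computation of $\mathcal{H}_u$ and the useful observation that the $P$-term drops out because $\Delta(t)=0$ at $u=\bar u(t)$.
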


\begin{proof}
By implicit function theorem for (\ref{def-delt}), we get (\ref{delt-convex}).
For each $u\in U$, taking $u_{\rho}(t)=\bar{u}(t)+\rho(u-\bar{u}(t))$, we can
get (\ref{hamil-convex}) by (\ref{mp-1}).
\end{proof}

\subsection{The case without Assumption \ref{assm-q-bound}}

The relations $Y_{1}(t)=p(t)X_{1}(t)$ and $\ Z_{1}(t)=K_{1}(t)X_{1}%
(t)+\Delta(t)I_{E_{\epsilon}}(t)$ in Lemma \ref{lemma-y1}, is the key point to
derive the maximum principle (\ref{mp-1}). Note that to prove Lemma
\ref{lemma-y1}, we need Assumption \ref{assm-q-bound}, which implies%
\begin{equation}
\mathbb{E}\left[  \sup\limits_{t\in\lbrack0,T]}|\tilde{X}_{1}(t)|^{2}\right]
<\infty. \label{eq-new211}%
\end{equation}
However, under the following assumption, combing Theorems
\ref{appen-th-linear-fbsde} and \ref{unique-pq} in appendix, we can obtain the
relations $Y_{1}(t)=p(t)X_{1}(t)$ and$\ Z_{1}(t)=K_{1}(t)X_{1}(t)+\Delta
(t)I_{E_{\epsilon}}(t)$ without Assumption \ref{assm-q-bound}.

\begin{assumption}
\label{assm-sig-small} $\sigma(t,x,y,z,u)=A(t)z+\sigma_{1}(t,x,y,u)$ and
$\left\Vert A(\cdot)\right\Vert _{\infty}$ is small enough.
\end{assumption}

In this case, the first-order adjoint equation becomes
\begin{equation}
\left\{
\begin{array}
[c]{rl}%
dp(t)= & -\left\{  g_{x}(t)+g_{y}(t)p(t)+g_{z}(t)K_{1}(t)+b_{x}(t)p(t)+b_{y}%
(t)p^{2}(t)+b_{z}(t)K_{1}(t)p(t)\right. \\
& \left.  +\sigma_{x}(t)q(t)+\sigma_{y}(t)p(t)q(t)+A(t)K_{1}(t)q(t)\right\}
dt+q(t)dB(t),\\
p(T)= & \phi_{x}(\bar{X}(T)),
\end{array}
\right.  \label{eq-p-q-unbound}%
\end{equation}
where
\[
K_{1}(t)=(1-p(t)A(t))^{-1}\left[  \sigma_{x}(t)p(t)+\sigma_{y}(t)p^{2}%
(t)+q(t)\right]  .
\]
The first-order variational equation becomes
\[
\left\{
\begin{array}
[c]{rl}%
dX_{1}(t)= & \left[  b_{x}(t)X_{1}(t)+b_{y}(t)Y_{1}(t)+b_{z}(t)(Z_{1}%
(t)-\Delta(t)I_{E_{\epsilon}}(t))\right]  dt\\
& +\left[  \sigma_{x}(t)X_{1}(t)+\sigma_{y}(t)Y_{1}(t)+A(t)(Z_{1}%
(t)-\Delta(t)I_{E_{\epsilon}}(t))+\delta\sigma(t,\Delta)I_{E_{\epsilon}%
}(t)\right]  dB(t),\\
X_{1}(0)= & 0,
\end{array}
\right.
\]
and
\[
\left\{
\begin{array}
[c]{lll}%
dY_{1}(t) & = & -\left[  g_{x}(t)X_{1}(t)+g_{y}(t)Y_{1}(t)+g_{z}%
(t)(Z_{1}(t)-\Delta(t)I_{E_{\epsilon}}(t))-q(t)\delta\sigma(t,\Delta
)I_{E_{\epsilon}}(t)\right]  dt+Z_{1}(t)dB(t),\\
Y_{1}(T) & = & \phi_{x}(\bar{X}(T))X_{1}(T),
\end{array}
\right.
\]
where
\[
\Delta(t)=\left(  1-p(t)A(t)\right)  ^{-1}p(t)\left(  \sigma_{1}(t,\bar
{X}(t),\bar{Y}(t),u(t))-\sigma_{1}(t,\bar{X}(t),\bar{Y}(t),\bar{u}(t))\right)
.
\]
By Theorems \ref{appen-th-linear-fbsde} and \ref{unique-pq} we have the
following relationship:

\begin{lemma}
Suppose Assumptions \ref{assum-2}(i)-(ii), \ref{assum-3} and
\ref{assm-sig-small} hold. Then we have
\begin{align*}
Y_{1}(t)  &  =p(t)X_{1}(t),\\
Z_{1}(t)  &  =K_{1}(t)X_{1}(t)+\Delta(t)I_{E_{\epsilon}}(t),
\end{align*}
where $p(\cdot)$ is the solution of \eqref{eq-p-q-unbound}.
\end{lemma}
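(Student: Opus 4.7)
The plan is to follow the same strategy as in Lemma \ref{lemma-y1}: guess the form of $(Y_1,Z_1)$ in terms of $X_1$, verify it by It\^{o}'s formula, and conclude by a uniqueness argument. The only obstruction to copying that proof verbatim is that here $q(\cdot)$ is no longer bounded, so $K_1(t) = (1-p(t)A(t))^{-1}[\sigma_x(t)p(t)+\sigma_y(t)p^2(t)+q(t)]$ is unbounded, and hence Theorem \ref{est-fbsde-lp} does not apply directly. The whole point of Assumption \ref{assm-sig-small} is to replace that input with the appendix results \ref{appen-th-linear-fbsde} and \ref{unique-pq}, whose hypotheses are tailored exactly to the case where the $z$-coefficient is linear with small norm.

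First, I would introduce the candidate process $\tilde{X}_1(\cdot)$ as the solution of the SDE obtained from \eqref{eq-x1} by substituting $\sigma_z(t) = A(t)$; note that the drift and diffusion of this SDE contain $K_1(t)$ and therefore $q(t)$, so $\tilde{X}_1$ lives only in the appropriate weighted space guaranteed by Theorem \ref{appen-th-linear-fbsde}. Since $q(\cdot) \in L^{2,\beta}_{\mathcal{F}}([0,T];\mathbb{R})$ for every $\beta \geq 1$ by Theorem \ref{exist-unique-BSDE}, and $\|A\|_{\infty}$ is small, the smallness assumption makes $(1-p(t)A(t))^{-1}$ well-defined and bounded, so the coefficients of this SDE fit the framework of the appendix. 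Set
\[
\tilde{Y}_1(t) = p(t)\tilde{X}_1(t), \qquad \tilde{Z}_1(t) = K_1(t)\tilde{X}_1(t) + \Delta(t) I_{E_\epsilon}(t).
\]

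Second, I would apply It\^{o}'s formula to $p(t)\tilde{X}_1(t)$, using the BSDE \eqref{eq-p-q-unbound} for $p(\cdot)$ and the SDE for $\tilde{X}_1$. The key algebraic identity is the definition of $K_1$, which is engineered so that the $dB$-coefficient of $d(p\tilde{X}_1)$ equals exactly $K_1 \tilde{X}_1 + \Delta I_{E_\epsilon} = \tilde{Z}_1$; this is just the solution of the linear equation $pA(t)\tilde{Z}_1 + [\sigma_x p + \sigma_y p^2 + q]\tilde{X}_1 + p \cdot \delta\sigma(t,\Delta) I_{E_\epsilon} = \tilde{Z}_1$, together with the algebra equation \eqref{def-delt} which in the present linear setting reads $\Delta(t) = p(t)A(t)\Delta(t) + p(t)(\sigma_1(t,\bar{X},\bar{Y},u) - \sigma_1(t,\bar{X},\bar{Y},\bar{u}))$. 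Matching drift terms, the contribution of $\delta\sigma(t,\Delta) I_{E_\epsilon}$ hitting $q(t)$ produces precisely the $-q(t)\delta\sigma(t,\Delta) I_{E_\epsilon}$ term in the BSDE for $Y_1$, while the remaining first-order pieces cancel against the generator of the $p$-equation. Thus $(\tilde{X}_1, \tilde{Y}_1, \tilde{Z}_1)$ solves the first-order variational FBSDE of this subsection.

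Third, I would invoke Theorem \ref{appen-th-linear-fbsde} (existence/uniqueness for linear fully coupled FBSDEs with unbounded coefficients under the smallness of $\|A\|_\infty$) to conclude that the variational system admits a unique solution in the relevant space, and hence $(\tilde{X}_1, \tilde{Y}_1, \tilde{Z}_1) = (X_1, Y_1, Z_1)$. Uniqueness of $(p,q)$ itself in the quadratic BSDE \eqref{eq-p-q-unbound}, needed to make the construction of $K_1$ unambiguous, is provided by Theorem \ref{unique-pq}. The main obstacle throughout is the unboundedness of $q(\cdot)$, which prevents the routine $L^p$-theory of Theorem \ref{est-fbsde-lp} from being applied; all the essential work has therefore been deferred to the appendix, where the linearity in $z$ together with the smallness of $\|A\|_\infty$ is exploited via a contraction argument analogous to the one in Theorem \ref{est-fbsde-lp} but carried out in a space adapted to unbounded coefficients.
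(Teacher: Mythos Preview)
Your strategy is correct and coincides with the paper's: construct the candidate via $\tilde Y_1=p\tilde X_1$, $\tilde Z_1=K_1\tilde X_1+\Delta I_{E_\epsilon}$, verify by It\^{o}'s formula that the triple solves the first-order variational FBSDE, and conclude by uniqueness. However, you have the division of labor among the cited theorems slightly scrambled. Theorem \ref{appen-th-linear-fbsde} does \emph{not} provide either uniqueness or an a priori space for $\tilde X_1$; it merely checks, assuming $(\tilde X_1,\tilde Y_1,\tilde Z_1)$ already lies in $L^2\times L^2\times L^{2,2}$, that this triple solves the linear FBSDE. Uniqueness of the variational FBSDE itself comes straight from Theorem \ref{est-fbsde-lp}, since that system has bounded coefficients (the first derivatives of $b,\sigma,g,\phi$) regardless of whether $q$ is bounded.

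The genuinely delicate point---and the place where smallness of $\|A\|_\infty$ is actually used---is verifying $\mathbb{E}\bigl[\sup_t|\tilde X_1(t)|^2\bigr]<\infty$ despite the unbounded factor $K_1$ in its drift and diffusion. This is \emph{not} in Theorem \ref{appen-th-linear-fbsde}; it is the content of the proof of Theorem \ref{unique-pq}(i), which writes $\tilde X_1$ as an explicit exponential and then appeals to Theorem \ref{q-exp-th} (exponential integrability of $\int_0^T|q(s)|^2\,ds$ for bounded solutions of quadratic BSDEs) to bound its moments when the $\gamma_2$-coefficient (here $A$) is small. Once that integrability is in hand, your steps go through verbatim.
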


\begin{lemma}
\label{est-one-order-q-unbound}Suppose Assumptions \ref{assum-2},
\ref{assum-3} and \ref{assm-sig-small} hold. Then for any $2\leq\beta<8$, we
have the following estimates
\begin{equation}
\mathbb{E}\left[  \sup\limits_{t\in\lbrack0,T]}\left(  |X_{1}(t)|^{\beta
}+|Y_{1}(t)|^{\beta}\right)  \right]  +\mathbb{E}\left[  \left(  \int_{0}%
^{T}|Z_{1}(t)|^{2}dt\right)  ^{\beta/2}\right]  =O(\epsilon^{\beta/2}),
\label{est-x1-y1-q-unbound}%
\end{equation}%
\[
\mathbb{E}\left[  \sup\limits_{t\in\lbrack0,T]}\left(  |X^{\epsilon}%
(t)-\bar{X}(t)-X_{1}(t)|^{4}+|Y^{\epsilon}(t)-\bar{Y}(t)-Y_{1}(t)|^{4}\right)
\right]  +\mathbb{E}\left[  \left(  \int_{0}^{T}|Z^{\epsilon}(t)-\bar
{Z}(t)-Z_{1}(t)|^{2}dt\right)  ^{2}\right]  =o(\epsilon^{2}).
\]

\end{lemma}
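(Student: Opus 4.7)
The plan is to mirror the two-step proof of Lemma \ref{est-one-order}, replacing the boundedness of $q$ (and hence of $K_1$) with H\"older-type arguments that exploit $q\in L_{\mathcal F}^{2,s}$ for every finite $s$ (Theorem \ref{exist-unique-BSDE}) together with the structural cancellation $\sigma_z\equiv A(t)$ afforded by Assumption \ref{assm-sig-small}.

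For the first estimate I apply Theorem \ref{est-fbsde-lp} to the linear first-order variational FBSDE, whose coefficients in $(X_1,Y_1,Z_1)$ are bounded by Assumption \ref{assum-2}(i). The inhomogeneous terms are concentrated on $E_\epsilon$ and involve $\Delta(t)$, $\delta\sigma(t,\Delta)$, and the genuinely new term $q(t)\delta\sigma(t,\Delta)I_{E_\epsilon}(t)$. Using Lemma \ref{delta-exist} (so $\Delta\in L^8$) and the Jensen-type inequality $\int_{E_\epsilon}|f|^2\,dt\leq \epsilon^{1-2/M}(\int_{E_\epsilon}|f|^M\,dt)^{2/M}$ with $M=8$, the $\Delta$- and $\delta\sigma$-terms contribute $O(\epsilon^{\beta/2})$ as in the bounded-$q$ case. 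For the $q\delta\sigma$-term I first apply Cauchy--Schwarz in time,
\[
\int_{E_\epsilon}|q(t)\,\delta\sigma(t,\Delta)|\,dt\leq\Big(\int_{E_\epsilon}q^2\,dt\Big)^{1/2}\Big(\int_{E_\epsilon}|\delta\sigma(t,\Delta)|^2\,dt\Big)^{1/2},
\]
then raise to the $\beta$th power and invoke H\"older with $1/p+1/p'=1$ and $p'\beta\leq 8$, which is feasible precisely because $\beta<8$ (this is what forces the strict inequality in the statement). The $\delta\sigma$-factor gives $O(\epsilon^{\beta/2})$ by the $L^8$-moment bound on the control, while the $q$-factor is bounded (indeed $o(1)$ by dominated convergence) since $q\in L_{\mathcal F}^{2,p\beta}$.

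For the second estimate I set up the linear FBSDE for the remainders $(\xi^{2,\epsilon},\eta^{2,\epsilon},\zeta^{2,\epsilon})$ with inhomogeneous terms $A_1^\epsilon,B_1^\epsilon,C_1^\epsilon,D_1^\epsilon$ exactly as in Lemma \ref{est-one-order}. Theorem \ref{est-fbsde-lp} with $p=4$ reduces the claim to showing
\[
\mathbb{E}\Big[\big(\!\int_0^T\!|A_1^\epsilon|\,dt\big)^{\!4}+\big(\!\int_0^T\!|C_1^\epsilon|\,dt\big)^{\!4}+\big(\!\int_0^T\!|B_1^\epsilon|^2\,dt\big)^{\!2}+|D_1^\epsilon(T)|^4\Big]=o(\epsilon^2).
\]
The decisive gain from Assumption \ref{assm-sig-small} is that $\sigma_z\equiv A(t)$ is independent of $(x,y,z,u)$, so $\tilde\sigma_z^\epsilon-\sigma_z\equiv 0$; the offending term $(\tilde\sigma_z^\epsilon-\sigma_z)K_1X_1$ disappears from $B_1^\epsilon$, and because $\sigma_{xz}=\sigma_{yz}=0$ the differences $|\tilde\sigma_x^\epsilon-\sigma_x|$, $|\tilde\sigma_y^\epsilon-\sigma_y|$ are free of $\zeta^{1,\epsilon}$. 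Combined with $Y_1=pX_1$ and the boundedness of $p$, the $B_1^\epsilon$-contribution closes by routine mixed H\"older using the first-order bound $\mathbb{E}[\sup|X_1|^{\beta_0}]=O(\epsilon^{\beta_0/2})$ for $\beta_0<8$ and $\mathbb{E}[\sup|\xi^{1,\epsilon}|^\beta]=O(\epsilon^{\beta/2})$ from Lemma \ref{est-epsilon-bar}. For the $(\tilde b_z^\epsilon-b_z)Z_1$ piece of $A_1^\epsilon$ and the analogous $(\tilde g_z^\epsilon-g_z)Z_1$ piece of $C_1^\epsilon$ I substitute $Z_1=K_1X_1+\Delta I_{E_\epsilon}$: the $\Delta I_{E_\epsilon}$-part is localized to $E_\epsilon$ and yields $O(\epsilon^4)$, whereas the $K_1X_1$-part is controlled by a multi-H\"older estimate balancing $\sup|X_1|$ at $L^{\beta_0}$ for $\beta_0<8$ arbitrarily close to $8$, the difference $\tilde b_z^\epsilon-b_z$ via Lemma \ref{est-epsilon-bar}, and $\int_0^T K_1^2\,dt$ in every $L^s$ using $q\in L_{\mathcal F}^{2,s}$. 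The $q\delta\sigma I_{E_\epsilon}$ contribution to $C_1^\epsilon$ is handled exactly as in Step~1 with $\beta=4<8$, giving $O(\epsilon^4)$.

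The hardest step will be closing the H\"older budget for the $K_1X_1$-pieces: $K_1$ is only polynomially integrable and $\sup|X_1|$ has estimates only at $L^{\beta_0}$ for $\beta_0<8$, so the three exponents satisfying $1/p+1/q+1/r=1$ must be balanced carefully. The two enablers that make the argument go through are the structural cancellation $\tilde\sigma_z^\epsilon-\sigma_z\equiv 0$ (without which the diffusion-side remainder would itself carry an unbounded $K_1$) and the fact that we only need $o(\epsilon^2)$ rather than a sharp $O(\epsilon^\alpha)$ with $\alpha>2$, giving just enough slack to absorb an arbitrarily small power loss in each H\"older factor.
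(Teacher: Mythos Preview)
Your proposal is correct and follows essentially the same route as the paper: the $q\,\delta\sigma\,I_{E_\epsilon}$ term is isolated and handled by Cauchy--Schwarz in time followed by a H\"older split that consumes the $L^8$-moment of $\delta\sigma(\cdot,\Delta)$ against $q\in L^{2,s}_{\mathcal F}$ for arbitrarily large $s$, which is exactly where the restriction $\beta<8$ enters; and for the second estimate the key structural point $\tilde\sigma_z^\epsilon-\sigma_z\equiv 0$ from $\sigma_z\equiv A(t)$ eliminates the dangerous $K_1X_1$-term in $B_1^\epsilon$.

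One minor tactical difference: for the $(\tilde b_z^\epsilon-b_z)Z_1$ and $(\tilde g_z^\epsilon-g_z)Z_1$ pieces you decompose $Z_1=K_1X_1+\Delta I_{E_\epsilon}$ and run a three-factor H\"older, whereas the paper keeps $Z_1$ intact and uses the first-part bound $\mathbb{E}\big[(\int_0^T|Z_1|^2dt)^{3}\big]=O(\epsilon^{3})$ directly, pairing it via H\"older $(3,\tfrac{3}{2})$ with $\mathbb{E}\big[(\int_0^T|\tilde g_z^\epsilon-g_z|^2dt)^6\big]=o(1)$ by bounded convergence. The paper's variant is a bit cleaner since it avoids tracking $K_1$ separately, but both close the $o(\epsilon^2)$ budget.
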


\begin{proof}
The estimate of (\ref{est-x1-y1-q-unbound}) is the same as (\ref{est-1-order})
in Lemma \ref{est-one-order}, except the following term,
\[%
\begin{array}
[c]{l}%
\mathbb{E}\left[  \left(  \int_{0}^{T}|q(t)\delta\sigma(t,\Delta
)|I_{E_{\epsilon}}(t)dt\right)  ^{\beta}\right] \\
\leq C\mathbb{E}\left[  \left(  \int_{E_{\epsilon}}|q(t)|\left(  1+|\bar
{X}(t)|+\left\vert \bar{Y}(t)\right\vert +\left\vert \bar{u}%
(t)|+|u(t)\right\vert \right)  dt\right)  ^{\beta}\right] \\
\leq C\mathbb{E}\left[  \left(  \int_{E_{\epsilon}}|q(t)|^{2}dt\right)
^{\frac{\beta}{2}}\left(  \int_{E_{\epsilon}}\left(  1+|\bar{X}(t)|^{2}%
+\left\vert \bar{Y}(t)\right\vert ^{2}+\left\vert \bar{u}(t)|^{2}%
+|u(t)\right\vert ^{2}\right)  dt\right)  ^{\frac{\beta}{2}}\right] \\
\leq C\left\{  \mathbb{E}\left[  \left(  \int_{E_{\epsilon}}\left(  1+|\bar
{X}(t)|^{2}+\left\vert \bar{Y}(t)\right\vert ^{2}+\left\vert \bar{u}%
(t)|^{2}+|u(t)\right\vert ^{2}\right)  dt\right)  ^{4}\right]  \right\}
^{\frac{\beta}{8}}\left\{  \mathbb{E}\left[  \left(  \int_{E_{\epsilon}%
}|q(t)|^{2}dt\right)  ^{\frac{4\beta}{8-\beta}}\right]  \right\}
^{\frac{8-\beta}{8}}\\
\leq C\left\{  \epsilon^{3}\mathbb{E}\left[  \int_{E_{\epsilon}}\left(
1+|\bar{X}(t)|^{8}+\left\vert \bar{Y}(t)\right\vert ^{8}+\left\vert \bar
{u}(t)|^{8}+|u(t)\right\vert ^{8}\right)  dt\right]  \right\}  ^{\frac{\beta
}{8}}\\
\leq C\epsilon^{\frac{\beta}{2}}.
\end{array}
\]
In this case, $A_{1}^{\epsilon}(\cdot)$, $C_{1}^{\epsilon}(\cdot)$,
$D_{1}^{\epsilon}(T)$ is the same as Lemma \ref{est-one-order}, and
\[
B_{1}^{\epsilon}(t)=(\tilde{\sigma}_{x}^{\epsilon}(t)-\sigma_{x}%
(t))X_{1}(t)+(\tilde{\sigma}_{y}^{\epsilon}(t)-\sigma_{y}(t))Y_{1}(t).
\]
By Theorem \ref{est-fbsde-lp}, we obtain
\[%
\begin{array}
[c]{l}%
\mathbb{E}\left[  \sup\limits_{t\in\lbrack0,T]}\left(  |\xi^{2,\epsilon
}(t)|^{4}+|\eta^{2,\epsilon}(t)|^{4}\right)  +\left(  \int_{0}^{T}%
|\zeta^{2,\epsilon}(t)|^{2}dt\right)  ^{2}\right] \\
\leq C\mathbb{E}\left[  \left(  \int_{0}^{T}\left(  |A_{1}^{\epsilon
}(t)|+|C_{1}^{\epsilon}(t)|\right)  dt\right)  ^{4}+\left(  \int_{0}^{T}%
|B_{1}^{\epsilon}(t)|^{2}dt\right)  ^{2}+|D_{1}^{\epsilon}(T)|^{4}\right] \\
\leq C\mathbb{E}\left[  |D_{1}^{\epsilon}(T)|^{4}+\left(  \int_{0}^{T}%
|C_{1}^{\epsilon}(t)|dt\right)  ^{4}+\left(  \int_{0}^{T}|B_{1}^{\epsilon
}(t)|^{2}dt\right)  ^{2}+\left(  \int_{0}^{T}|A_{1}^{\epsilon}(t)|dt\right)
^{4}\right]  .
\end{array}
\]
We estimate term by term as follows.

(1)
\[
\mathbb{E}\left[  |D_{1}^{\epsilon}(T)|^{4}\right]  \leq C\left\{
\mathbb{E}\left[  \sup\limits_{0\leq t\leq T}|X_{1}(t)|^{6}\right]  \right\}
^{\frac{2}{3}}\left\{  \mathbb{E}\left[  \left\vert \tilde{\phi}_{x}%
^{\epsilon}(T)-\phi_{x}\left(  \bar{X}\left(  T\right)  \right)  \right\vert
^{12}\right]  \right\}  ^{\frac{1}{3}}=o\left(  \epsilon^{2}\right)  .
\]

(2)
\begin{equation}%
\begin{array}
[c]{l}%
\mathbb{E}\left[  \left(  \int_{0}^{T}|\tilde{g}_{z}^{\epsilon}(t)-g_{z}%
(t)||Z_{1}(t)|dt\right)  ^{4}\right] \\
\leq C\mathbb{E}\left[  \left(  \int_{0}^{T}|\tilde{g}_{z}^{\epsilon}%
(t)-g_{z}(t)|^{2}dt\right)  ^{2}\left(  \int_{0}^{T}|Z_{1}(t)|^{2}dt\right)
^{2}\right] \\
\leq C\left\{  \mathbb{E}\left[  \left(  \int_{0}^{T}|\tilde{g}_{z}^{\epsilon
}(t)-g_{z}(t)|^{2}dt\right)  ^{6}\right]  \right\}  ^{\frac{1}{3}}\left\{
\mathbb{E}\left[  \left(  \int_{0}^{T}|Z_{1}(t)|^{2}dt\right)  ^{3}\right]
\right\}  ^{\frac{2}{3}}\\
=o\left(  \epsilon^{2}\right)  ,
\end{array}
\label{est-g-z-q-unbound}%
\end{equation}
the estimates of $\mathbb{E}\left[  \left(  \int_{0}^{T}\left\vert \left(
\tilde{g}_{x}^{\epsilon}(t)-g_{x}(t)\right)  X_{1}(t)\right\vert dt\right)
^{4}\right]  $ and $\mathbb{E}\left[  \left(  \int_{0}^{T}\left\vert \left(
\tilde{g}_{y}^{\epsilon}(t)-g_{y}(t)\right)  Y_{1}(t)\right\vert dt\right)
^{4}\right]  $ are the same as \eqref{est-g-z-q-unbound},
\begin{equation}
\mathbb{E}\left[  \left(  \int_{0}^{T}|g_{z}(t)\Delta(t)I_{E_{\epsilon}%
}(t)|dt\right)  ^{4}\right]  \leq C\epsilon^{3}\int_{E_{\epsilon}}%
\mathbb{E}[|\Delta(t)|^{4}]dt=o\left(  \epsilon^{2}\right)  ,
\label{est-g-del-qunbound}%
\end{equation}
the estimate of $\mathbb{E}\left[  \left(  \int_{0}^{T}\left\vert \delta
g\left(  t\right)  I_{E_{\epsilon}}(t)\right\vert dt\right)  ^{4}\right]  $ is
the same as \eqref{est-g-del-qunbound},%
\[%
\begin{array}
[c]{l}%
\mathbb{E}\left[  (\int_{0}^{T}|q(t)\delta\sigma(t,\Delta)I_{E_{\epsilon}%
}(t)|dt)^{4}\right] \\
\leq C\left\{  \mathbb{E}\left[  \left(  \int_{E_{\epsilon}}|q(t)|^{2}%
dt\right)  ^{4}\right]  \right\}  ^{\frac{1}{2}}\left\{  \mathbb{E}\left[
\left(  \int_{E_{\epsilon}}|\delta\sigma(t,\Delta)|^{2}dt\right)  ^{4}\right]
\right\}  ^{\frac{1}{2}}\\
\leq C\left\{  \mathbb{E}\left[  \left(  \int_{E_{\epsilon}}|q(t)|^{2}%
dt\right)  ^{4}\right]  \right\}  ^{\frac{1}{2}}\left\{  \mathbb{E}\left[
\left(  \int_{E_{\epsilon}}\left(  1+|\bar{X}(t)|^{2}+\left\vert \bar
{Y}(t)\right\vert ^{2}+\left\vert \bar{u}(t)|^{2}+|u(t)\right\vert
^{2}\right)  dt\right)  ^{4}\right]  \right\}  ^{\frac{1}{2}}\\
\leq C\epsilon^{2}\left\{  \mathbb{E}\left[  \left(  \int_{E_{\epsilon}%
}|q(t)|^{2}dt\right)  ^{4}\right]  \right\}  ^{\frac{1}{2}}\\
=o\left(  \epsilon^{2}\right)  .
\end{array}
\]
Then,
\[
\mathbb{E}\left[  \left(  \int_{0}^{T}|C_{1}^{\epsilon}(t)|dt\right)
^{4}\right]  =o\left(  \epsilon^{2}\right)  .
\]

(3)
\[%
\begin{array}
[c]{l}%
\mathbb{E}\left[  \left(  \int_{0}^{T}|\tilde{\sigma}_{y}^{\epsilon}%
(t)-\sigma_{y}(t)|^{2}|Y_{1}(t)|^{2}dt\right)  ^{2}\right] \\
\leq C\mathbb{E}\left[  \sup\limits_{0\leq t\leq T}|Y_{1}(t)|^{4}\left(
\int_{0}^{T}|\tilde{\sigma}_{z}^{\epsilon}(t)-\sigma_{z}(t)|^{2}dt\right)
^{2}\right] \\
\leq C\left\{  \mathbb{E}\left[  \sup\limits_{0\leq t\leq T}|Y_{1}%
(t)|^{6}\right]  \right\}  ^{\frac{2}{3}}\left\{  \mathbb{E}\left[  \left(
\int_{0}^{T}|\tilde{\sigma}_{z}^{\epsilon}(t)-\sigma_{z}(t)|^{2}dt\right)
^{6}\right]  \right\}  ^{\frac{1}{3}}\\
=o\left(  \epsilon^{2}\right)  ,
\end{array}
\]
the estimate of $\mathbb{E}\left[  \left(  \int_{0}^{T}|\tilde{\sigma}%
_{x}^{\epsilon}(t)-\sigma_{x}(t)|^{2}|X_{1}(t)|^{2}dt\right)  ^{2}\right]  $
is similar. Thus,
\[
\mathbb{E}\left[  \left(  \int_{0}^{T}|B_{1}^{\epsilon}(t)|^{2}dt\right)
^{2}\right]  =o\left(  \epsilon^{2}\right)  .
\]

(4) The estimate of $\mathbb{E}\left[  \left(  \int_{0}^{T}|A_{1}^{\epsilon
}(t)|dt\right)  ^{4}\right]  $ is the same as $\mathbb{E}\left[  \left(
\int_{0}^{T}|C_{1}^{\epsilon}(t)|dt\right)  ^{4}\right]  $.
\end{proof}

The second-order variational equation becomes
\begin{equation}
\left\{
\begin{array}
[c]{rl}%
dX_{2}(t)= & \left\{  b_{x}(t)X_{2}(t)+b_{y}(t)Y_{2}(t)+b_{z}(t)Z_{2}%
(t)+\delta b(t,\Delta)I_{E_{\epsilon}}(t)\right. \\
& \left.  +\frac{1}{2}\left[  1,p(t),K_{1}(t)\right]  D^{2}b(t)\left[
1,p(t),K_{1}(t)\right]  ^{\intercal}X_{1}(t)^{2}\right\}  dt\\
& +\left\{  \sigma_{x}(t)X_{2}(t)+\sigma_{y}(t)Y_{2}(t)+A(t)Z_{2}(t)+\left[
\delta\sigma_{x}(t)X_{1}(t)+\delta\sigma_{y}(t)Y_{1}(t)\right]  I_{E_{\epsilon
}}(t)\right. \\
& \left.  +\frac{1}{2}\left[  X_{1}(t),Y_{1}(t)\right]  D^{2}\sigma
_{1}(t)\left[  X_{1}(t),Y_{1}(t)\right]  ^{\intercal}\right\}  dB(t),\\
X_{2}(0)= & 0,
\end{array}
\right.  \label{new-form-x2-q-unbound}%
\end{equation}%
\begin{equation}
\left\{
\begin{array}
[c]{ll}%
dY_{2}(t)= & -\left\{  g_{x}(t)X_{2}(t)+g_{y}(t)Y_{2}(t)+g_{z}(t)Z_{2}%
(t)+\frac{1}{2}\left[  1,p(t),K_{1}(t)\right]  D^{2}g(t)\left[  1,p(t),K_{1}%
(t)\right]  ^{\intercal}X_{1}^{2}(t)\right. \\
& \left.  +q(t)\delta\sigma(t,\Delta)I_{E_{\epsilon}}(t)+\delta g(t,\Delta
)I_{E_{\epsilon}}(t)\right\}  dt+Z_{2}(t)dB(t),\\
Y_{2}(T)= & \phi_{x}(\bar{X}(T))X_{2}(T)+\frac{1}{2}\phi_{xx}(\bar{X}%
(T))X_{1}^{2}(T).
\end{array}
\right.  \label{new-form-y2-q-unbound}%
\end{equation}
The following second-order estimates hold.

\begin{lemma}
\label{est-second-order-q-unbound} Suppose Assumptions \ref{assum-2},
\ref{assum-3} and \ref{assm-sig-small} hold. Then we have the following
estimates%
\[%
\begin{array}
[c]{rl}%
\mathbb{E}\left[  \sup\limits_{t\in\lbrack0,T]}|X^{\epsilon}(t)-\bar
{X}(t)-X_{1}(t)-X_{2}(t)|^{2}\right]  & =o(\epsilon^{2}),\\
\mathbb{E}\left[  \sup\limits_{t\in\lbrack0,T]}|Y^{\epsilon}(t)-\bar
{Y}(t)-Y_{1}(t)-Y_{2}(t)|^{2}\right]  +\mathbb{E}\left[  \int_{0}%
^{T}|Z^{\epsilon}(t)-\bar{Z}(t)-Z_{1}(t)-Z_{2}(t)|^{2}dt\right]  &
=o(\epsilon^{2}).
\end{array}
\]

\end{lemma}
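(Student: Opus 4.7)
The proof would mirror that of Lemma \ref{lemma-est-sup}, exploiting two features that remain available when $q(\cdot)$ is merely in $L_{\mathcal{F}}^{2,\beta}$ for every $\beta\ge 1$ rather than bounded. First, Assumption \ref{assm-sig-small} forces $\sigma_{zz}\equiv 0$, which removes the problematic quadratic $\zeta^{1,\epsilon}$-term from the diffusion remainder $B_2^{\epsilon}$. Second, the contributions $\pm q(t)\delta\sigma(t,\Delta)I_{E_{\epsilon}}(t)$ appearing in the drivers of \eqref{new-form-y1} and \eqref{new-form-y2-q-unbound} cancel upon forming $Y_{1}+Y_{2}$, so after subtraction the driver of $\eta^{3,\epsilon}(t):=Y^{\epsilon}(t)-\bar Y(t)-Y_{1}(t)-Y_{2}(t)$ carries no explicit $q$-term at all; this is what allows us to bypass the duality argument used in Lemma \ref{est-second-order}, which relied on $q$ being bounded.

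\textbf{Main step.} I would perform a second-order Taylor expansion of $b,\sigma,g,\phi$ at $(\bar X(t),\bar Y(t),\bar Z(t)+\Delta(t)I_{E_{\epsilon}}(t))$ and assemble the linear fully coupled FBSDE satisfied by $(\xi^{3,\epsilon},\eta^{3,\epsilon},\zeta^{3,\epsilon})$. Its driver coefficients are bounded and the remainder quadruple $(A_{2}^{\epsilon},B_{2}^{\epsilon},C_{2}^{\epsilon},D_{2}^{\epsilon})$ has exactly the form appearing in Lemma \ref{lemma-est-sup}. Since $\|A\|_{\infty}$ is small, $\Lambda_{2}<1$, so Theorem \ref{est-fbsde-lp} with $p=2$ yields
\begin{align*}
&\mathbb{E}\Big[\sup_{t\in[0,T]}\big(|\xi^{3,\epsilon}(t)|^{2}+|\eta^{3,\epsilon}(t)|^{2}\big)+\int_{0}^{T}|\zeta^{3,\epsilon}(t)|^{2}\,dt\Big]\\
&\qquad\le C\,\mathbb{E}\Big[\Big(\int_{0}^{T}|A_{2}^{\epsilon}|\,dt\Big)^{2}+\Big(\int_{0}^{T}|C_{2}^{\epsilon}|\,dt\Big)^{2}+\int_{0}^{T}|B_{2}^{\epsilon}|^{2}\,dt+|D_{2}^{\epsilon}(T)|^{2}\Big].
\end{align*}
All the first-order inputs needed to control the right-hand side, namely $\mathbb{E}[\sup|X_{1}|^{4}+\sup|Y_{1}|^{4}+(\int|Z_{1}|^{2}dt)^{2}]=O(\epsilon^{2})$ and $\mathbb{E}[\sup|\xi^{2,\epsilon}|^{4}+\sup|\eta^{2,\epsilon}|^{4}+(\int|\zeta^{2,\epsilon}|^{2}dt)^{2}]=o(\epsilon^{2})$, are delivered by Lemma \ref{est-one-order-q-unbound} without any boundedness assumption on $q$, well within its admissible range $\beta=4<8$.

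\textbf{Main obstacle.} The most delicate step is verifying $\mathbb{E}[(\int_{0}^{T}|A_{2}^{\epsilon}|\,dt)^{2}]+\mathbb{E}[(\int_{0}^{T}|C_{2}^{\epsilon}|\,dt)^{2}]=o(\epsilon^{2})$. The critical sub-estimates are the Hessian-difference contributions of the form $(\widetilde{D^{2}b^{\epsilon}}-D^{2}b)$ multiplied by a quadratic in $[\xi^{1,\epsilon},\eta^{1,\epsilon},\zeta^{1,\epsilon}-\Delta I_{E_{\epsilon}}]$, together with $(\widetilde{D^{2}g^{\epsilon}}-D^{2}g)$ analogues. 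I would decompose each such product exactly as in \eqref{est-d2b}: a ``mixed'' piece of the shape $\widetilde{D^{2}b^{\epsilon}}\cdot\zeta^{2,\epsilon}(\zeta^{1,\epsilon}-\Delta I_{E_{\epsilon}}+K_{1}X_{1})$ is controlled by Cauchy--Schwarz and the bound $\int_{0}^{T}|\zeta^{2,\epsilon}|^{2}dt=o(\epsilon^{2})$, while the ``pure'' piece $(\widetilde{D^{2}b^{\epsilon}}-D^{2}b)K_{1}^{2}X_{1}^{2}$ is $o(\epsilon^{2})$ by Hölder and dominated convergence combined with $\mathbb{E}\sup|X_{1}|^{4}=O(\epsilon^{2})$. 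Since $\sigma_{zz}\equiv 0$, $B_{2}^{\epsilon}$ contains no $\zeta^{1,\epsilon}$-quadratic complication and is handled exactly by the computation at \eqref{est-sigmax-x-x1}; the terminal error $D_{2}^{\epsilon}(T)$ is $o(\epsilon^{2})$ by the Hölder argument already used in Lemma \ref{est-one-order-q-unbound}. Summing the four resulting contributions yields both displayed identities in a single application of the inequality above.
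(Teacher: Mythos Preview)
Your plan is essentially the paper's own argument: derive the linear FBSDE for $(\xi^{3,\epsilon},\eta^{3,\epsilon},\zeta^{3,\epsilon})$ with bounded coefficients, apply Theorem~\ref{est-fbsde-lp} with $p=2$, and reduce to showing the four remainder terms $A_{2}^{\epsilon},B_{2}^{\epsilon},C_{2}^{\epsilon},D_{2}^{\epsilon}$ are $o(\epsilon^{2})$ in the appropriate norms. Your observations that the $\pm q(t)\delta\sigma(t,\Delta)I_{E_{\epsilon}}$ terms cancel and that $\sigma_{zz}\equiv 0$ kills the dangerous diffusion remainder are exactly the two points the paper exploits.

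There is, however, one concrete imprecision. You commit to the first-order inputs at level $\beta=4$, and in particular claim the ``pure'' Hessian piece $(\widetilde{D^{2}b^{\epsilon}}-D^{2}b)K_{1}^{2}X_{1}^{2}$ is handled ``by H\"older and dominated convergence combined with $\mathbb{E}\sup|X_{1}|^{4}=O(\epsilon^{2})$''. This does not close: since $q$ is unbounded here, $K_{1}=(1-pA)^{-1}(\sigma_{x}p+\sigma_{y}p^{2}+q)$ is only in $L_{\mathcal F}^{2,\beta}$ for all $\beta$, not in $L^{\infty}$. To separate $\sup|X_{1}|$ from the random factor $\int|\widetilde{D^{2}b^{\epsilon}}-D^{2}b|\,K_{1}^{2}\,dt$ via H\"older one needs conjugate exponents $1/p+1/p'=1$ with $p>1$, hence $\mathbb{E}\sup|X_{1}|^{4p}$ with $4p>4$. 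The paper takes $p=3/2$ and uses
\[
\Big\{\mathbb{E}\big[\sup_{t}|X_{1}(t)|^{6}\big]\Big\}^{2/3}\Big\{\mathbb{E}\Big[\Big(\int_{0}^{T}|\widetilde{b}_{zz}^{\epsilon}-b_{zz}|\,|K_{1}|^{2}\,dt\Big)^{6}\Big]\Big\}^{1/3}=O(\epsilon^{2})\cdot o(1),
\]
and the same exponent split $(\tfrac12,\tfrac13,\tfrac16)$ for the mixed piece $\int|\zeta^{2,\epsilon}|\,|K_{1}X_{1}|\,dt$. Both require the $\beta=6$ estimate from Lemma~\ref{est-one-order-q-unbound}, which is available (any $\beta\in(4,8)$ works), so the fix is immediate---but $\beta=4$ alone is not enough.
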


\begin{proof}
We use the same notations $A_{2}^{\epsilon}(t)$ $C_{2}^{\epsilon}(t)$ and
$D_{2}^{\epsilon}(T)$ as in\ Lemma \ref{est-second-order}. The only different
term is
\[%
\begin{array}
[c]{ll}%
B_{2}^{\epsilon}(t)= & \delta\sigma_{x}(t)\xi^{2,\epsilon}(t)I_{E_{\epsilon}%
}(t)+\delta\sigma_{y}(t)\eta^{2,\epsilon}(t)I_{E_{\epsilon}}(t)+\frac{1}%
{2}\left[  \xi^{1,\epsilon}(t),\eta^{1,\epsilon}(t)\right]  \widetilde{D^{2}%
\sigma^{\epsilon}}(t)\left[  \xi^{1,\epsilon}(t),\eta^{1,\epsilon}(t)\right]
^{\intercal}\\
& -\frac{1}{2}\left[  X_{1}(t),Y_{1}(t)\right]  D^{2}\sigma(t)\left[
X_{1}(t),Y_{1}(t)\right]  ^{\intercal}.
\end{array}
\]
Then, we have that
\begin{equation}
\left\{
\begin{array}
[c]{l}%
d\xi^{3,\epsilon}(t)=\left[  b_{x}(t)\xi^{3,\epsilon}(t)+b_{y}(t)\eta
^{3,\epsilon}(t)+b_{z}(t)\zeta^{3,\epsilon}(t)+A_{2}^{\epsilon}(t)\right]
dt\ \\
\text{ \ \ \ \ \ \ }+\left[  \sigma_{x}(t)\xi^{3,\epsilon}(t)+\sigma
_{y}(t)\eta^{3,\epsilon}(t)+A(t)\zeta^{3,\epsilon}(t)+B_{2}^{\epsilon
}(t)\right]  dB(t),\\
\xi^{3,\epsilon}(0)=0,
\end{array}
\right.  \label{x-x1-x2-bz-0}%
\end{equation}
and%
\begin{equation}
\left\{
\begin{array}
[c]{ll}%
d\eta^{3,\epsilon}(t)= & -\left[  g_{x}(t)\xi^{3,\epsilon}(t)+g_{y}%
(t)\eta^{3,\epsilon}(t)+g_{z}(t)\zeta^{3,\epsilon}(t)+C_{2}^{\epsilon
}(t)\right]  dt-\zeta^{3,\epsilon}(t)dB(t),\\
\eta^{3,\epsilon}(T)= & \phi_{x}(\bar{X}(T))\xi^{3,\epsilon}(T)+D_{2}%
^{\epsilon}(T).
\end{array}
\right.  \label{y-y1-y2-bz-0}%
\end{equation}
By Theorem \ref{est-fbsde-lp},%
\[%
\begin{array}
[c]{l}%
\mathbb{E}\left[  \sup\limits_{t\in\lbrack0,T]}\left(  |\xi^{3,\epsilon
}(t)|^{2}+|\eta^{3,\epsilon}(t)|^{2}\right)  +\int_{0}^{T}|\zeta^{3,\epsilon
}(t)|^{2}dt\right] \\
\leq\mathbb{E}\left[  \left(  \int_{0}^{T}|A_{2}^{\epsilon}(t)|dt\right)
^{2}+\left(  \int_{0}^{T}|C_{2}^{\epsilon}(t)|dt\right)  ^{2}+\int_{0}%
^{T}|B_{2}^{\epsilon}(t)|^{2}dt+|D_{2}^{\epsilon}(T)|^{2}\right]  .
\end{array}
\]
We estimate term by term in the followings.

(1)
\[%
\begin{array}
[c]{l}%
\mathbb{E}\left[  \left(  \int_{0}^{T}(\delta b_{z}(t,\Delta)(\zeta
^{1,\epsilon}(t)-\Delta(t)I_{E_{\epsilon}}(t))dt\right)  ^{2}\right] \\
\leq\mathbb{E}\left[  \left(  \int_{E_{\epsilon}}|\delta b_{z}(t,\Delta
)|\left(  |\zeta^{2,\epsilon}(t)|+|K_{1}(t)X_{1}(t)|\right)  dt\right)
^{2}\right] \\
\leq C\epsilon\mathbb{E}\left[  \int_{E_{\epsilon}}|\zeta^{2,\epsilon}%
(t)|^{2}dt\right]  +C\epsilon\mathbb{E}\left[  \sup\limits_{t\in\lbrack
0,T]}|X_{1}(t)|^{2}\int_{E_{\epsilon}}|K_{1}(t)|^{2}dt\right] \\
\leq C\epsilon\left\{  \mathbb{E}\left[  \left(  \int_{0}^{T}|\zeta
^{2,\epsilon}(t)|^{2}dt\right)  ^{2}\right]  \right\}  ^{\frac{1}{2}%
}+C\epsilon^{2}\left\{  \mathbb{E}\left[  \left(  \int_{E_{\epsilon}}%
|K_{1}(t)|^{2}dt\right)  ^{2}\right]  \right\}  ^{\frac{1}{2}}\\
=o(\epsilon^{2}),
\end{array}
\]%
\[%
\begin{array}
[c]{l}%
\mathbb{E}\left[  \left(  \int_{0}^{T}\left(  \tilde{b}_{zz}(t)(\zeta
^{1,\epsilon}(t)-\Delta(t)I_{E_{\epsilon}}(t))^{2}-b_{zz}(t)K_{1}(t)^{2}%
X_{1}(t)^{2}\right)  dt\right)  ^{2}\right] \\
\leq\mathbb{E}\left[  \left(  \int_{0}^{T}\tilde{b}_{zz}(t)\left(
(\zeta^{1,\epsilon}(t)-\Delta(t)I_{E_{\epsilon}}(t))^{2}-K_{1}(t)^{2}%
X_{1}(t)^{2}\right)  dt\right)  ^{2}+\left(  \int_{0}^{T}\left\vert \tilde
{b}_{zz}(t)-b_{zz}(t)\right\vert \left\vert K_{1}(t)X_{1}(t)\right\vert
^{2}dt\right)  ^{2}\right] \\
\leq C\mathbb{E}\left[  \left(  \int_{0}^{T}\left\vert \zeta^{2,\epsilon
}(t)\right\vert ^{2}dt\right)  ^{2}+\left(  \int_{0}^{T}\left\vert
\zeta^{2,\epsilon}(t)\right\vert \left\vert K_{1}(t)X_{1}(t)\right\vert
dt\right)  ^{2}\right] \\
\text{ \ }+\left\{  \mathbb{E}\left[  \sup\limits_{t\in\lbrack0,T]}\left\vert
X_{1}(t)\right\vert ^{6}\right]  \right\}  ^{\frac{2}{3}}\left\{
\mathbb{E}\left[  \left(  \int_{0}^{T}\left\vert \tilde{b}_{zz}(t)-b_{zz}%
(t)\right\vert \left\vert K_{1}(t)\right\vert ^{2}dt\right)  ^{6}\right]
\right\}  ^{\frac{1}{3}}\\
\leq C\left\{  \mathbb{E}\left[  \left(  \int_{0}^{T}|\zeta^{2,\epsilon
}(t)|^{2}dt\right)  ^{2}\right]  \right\}  ^{\frac{1}{2}}\left\{
\mathbb{E}\left[  \sup\limits_{t\in\lbrack0,T]}\left\vert X_{1}(t)\right\vert
^{6}\right]  \right\}  ^{\frac{1}{3}}\left\{  \mathbb{E}\left[  \left(
\int_{0}^{T}\left\vert K_{1}(t)\right\vert ^{2}dt\right)  ^{6}\right]
\right\}  ^{\frac{1}{6}}+o(\epsilon^{2})\\
=o(\epsilon^{2}),
\end{array}
\]
the other terms are similar. Then
\[
\mathbb{E}\left[  \left(  \int_{0}^{T}|A_{2}^{\epsilon}(t)|dt\right)
^{2}\right]  =o(\epsilon^{2}).
\]

(2) The estimate of $C_{2}^{\epsilon}(t)$ is the same as $A_{2}^{\epsilon}(t)$.

(3)
\[%
\begin{array}
[c]{l}%
\mathbb{E}\left[  \int_{0}^{T}\left\vert \delta\sigma_{x}(t)\xi^{2,\epsilon
}(t)\right\vert ^{2}I_{E_{\epsilon}}(t)dt\right]  \leq C\epsilon\left\{
\mathbb{E}\left[  \sup\limits_{t\in\lbrack0,T]}\left\vert \xi^{2,\epsilon
}(t)\right\vert ^{4}\right]  \right\}  ^{\frac{1}{2}}=o(\epsilon^{2}),
\end{array}
\]%
\[%
\begin{array}
[c]{l}%
\mathbb{E}\left[  \int_{0}^{T}\left\vert \tilde{\sigma}_{yy}^{\epsilon}%
(t)\eta^{1,\epsilon}(t)^{2}-\sigma_{yy}(t)Y_{1}(t)^{2}\right\vert
^{2}dt\right]  \ \ \\
\leq\mathbb{E}\left[  \int_{0}^{T}\left\vert \tilde{\sigma}_{yy}^{\epsilon
}(t)\eta^{2,\epsilon}(t)(\eta^{1,\epsilon}(t)+Y_{1}(t))\right\vert
^{2}dt\right]  +\mathbb{E}\left[  \int_{0}^{T}\left\vert \tilde{\sigma}%
_{yy}^{\epsilon}(t)-\sigma_{yy}(t)\right\vert ^{2}Y_{1}(t)^{4}dt\right] \\
\leq C\mathbb{E}\left[  \int_{0}^{T}\left\vert \eta^{2,\epsilon}(t)\right\vert
^{2}\left\vert \eta^{1,\epsilon}(t)+Y_{1}(t)\right\vert ^{2}dt\right]
+\mathbb{E}\left[  \sup\limits_{t\in\lbrack0,T]}\left\vert Y_{1}(t)\right\vert
^{4}\int_{0}^{T}\left\vert \tilde{\sigma}_{yy}^{\epsilon}(t)-\sigma
_{yy}(t)\right\vert ^{2}dt\right] \\
\leq C\left\{  \mathbb{E}\left[  \sup\limits_{t\in\lbrack0,T]}\left\vert
\eta^{2,\epsilon}(t)\right\vert ^{4}\right]  \right\}  ^{\frac{1}{2}}\left\{
\mathbb{E}\left[  \sup\limits_{t\in\lbrack0,T]}\left\vert \eta^{1,\epsilon
}(t)+Y_{1}(t)\right\vert ^{4}\right]  \right\}  ^{\frac{1}{2}}+o(\epsilon
^{2})\\
=o(\epsilon^{2}),
\end{array}
\]
the other terms are similar. Thus
\[
\mathbb{E}\left[  \int_{0}^{T}|B_{2}^{\epsilon}(t)|^{2}dt\right]
=o(\epsilon^{2}).
\]

(4)
\[
\mathbb{E}\left[  |D_{2}^{\epsilon}(T)|^{2}\right]  \leq C\mathbb{E}\left[
|\tilde{\phi}_{xx}^{\epsilon}(T)-\phi_{xx}(\bar{X}(T))|^{2}|\xi^{1,\epsilon
}(T)|^{4}+|\xi^{2,\epsilon}(T)|^{2}|\xi^{1,\epsilon}(T)+X_{1}(T)|^{2}\right]
=o(\epsilon^{2}).
\]
Thus,%
\[
\mathbb{E}\left\{  \sup\limits_{t\in\lbrack0,T]}[|\xi^{3,\epsilon}%
(t)|^{2}+|\eta^{3,\epsilon}(t)|^{2}]+\int_{0}^{T}|\zeta^{3,\epsilon}%
(t)|^{2}dt\right\}  =o(\epsilon^{2}).
\]

\end{proof}

Now we introduce the second-order adjoint equation:%
\begin{equation}
\left\{
\begin{array}
[c]{rl}%
-dP(t)= & \left\{  P(t)\left[  (D\sigma(t)^{\intercal}[1,p(t),K_{1}%
(t)]^{\intercal})^{2}+2Db(t)^{\intercal}[1,p(t),K_{1}(t)]^{\intercal}%
+H_{y}(t)\right]  \right. \\
& +2Q(t)D\sigma(t)^{\intercal}[1,p(t),K_{1}(t)]^{\intercal}+\left[
1,p(t),K_{1}(t)\right]  D^{2}H(t)\left[  1,p(t),K_{1}(t)\right]  ^{\intercal
}\left.  +H_{z}(t)K_{2}(t)\right\}  dt\\
& -Q(t)dB(t),\\
P(T)= & \phi_{xx}(\bar{X}(T)),
\end{array}
\right.  \label{eq-P-sigmaz0}%
\end{equation}

where
\[%
\begin{array}
[c]{ll}%
H(t,x,y,z,u,p,q)= & g(t,x,y,z,u)+pb(t,x,y,z,u)+q\sigma(t,x,y,z,u),
\end{array}
\]%
\[%
\begin{array}
[c]{ll}%
K_{2}(t)= & \left(  1-p(t)A(t)\right)  ^{-1}\left\{  p(t)\sigma_{y}%
(t)+2\left[  \sigma_{x}(t)+\sigma_{y}(t)p(t)+A(t)K_{1}(t)\right]  \right\}
P(t)\\
& +\left(  1-p(t)A(t)\right)  ^{-1}\left\{  Q(t)+p(t)[1,p(t)]D^{2}\sigma
_{1}(t)[1,p(t)]^{\intercal}\right\}  .
\end{array}
\]

(\ref{eq-P-sigmaz0}) is a linear BSDE with non-Lipschitz coefficient for
$P(\cdot)$. Then, by Theorem \ref{q-exp-th} in appendix, (\ref{eq-P-sigmaz0})
has a unique pair of solution according to Theorem 5.21 in \cite{Pardoux-book}%
. By the same analysis as in Lemma \ref{relation-y2}, we introduce the
following auxiliary equation:
\begin{equation}%
\begin{array}
[c]{l}%
\hat{Y}(t)=\int_{t}^{T}\left\{  (H_{y}(s)+\sigma_{y}(s)g_{z}%
(s)p(s)(1-p(s)A(s))^{-1})\hat{Y}(s)+\left(  H_{z}(s)+\sigma_{z}(s)g_{z}%
(s)p(s)(1-p(s)A(s))^{-1}\right)  \hat{Z}(s)\right. \\
\ \ \ \ \ \ \ \ \ \ \left.  +\left[  \delta H(s,\Delta)+\frac{1}{2}%
P(s)\delta\sigma(s,\Delta)^{2}\right]  I_{E_{\epsilon}}(s)\right\}
ds-\int_{t}^{T}\hat{Z}(s)dB(s),
\end{array}
\label{yhat-sigmaz0}%
\end{equation}
where $\delta H(s,\Delta):=p(s)\delta b(s,\Delta)+q(s)\delta\sigma
(s,\Delta)+\delta g(s,\Delta)$, and obtain the following relationship.

\begin{lemma}
\label{relation-second-order-q-unbound}Suppose Assumptions \ref{assum-2},
\ref{assum-3} and \ref{assm-sig-small} hold. Then%
\[%
\begin{array}
[c]{rl}%
Y_{2}(t) & =p(t)X_{2}(t)+\frac{1}{2}P(t)X_{1}(t)^{2}+\hat{Y}(t),\\
Z_{2}(t) & =\mathbf{I(t)}+\hat{Z}(t),
\end{array}
\]
where $(\hat{Y}(\cdot),\hat{Z}(\cdot))$ is the solution to
\eqref{yhat-sigmaz0} and
\begin{align*}
\mathbf{I(t)}  &  =K_{1}(t)X_{2}(t)+\frac{1}{2}K_{2}(t)X_{1}^{2}%
(t)+(1-p(t)A(t))^{-1}p(t)(\sigma_{y}(t)\hat{Y}(t)+A(t)\hat{Z}(t))+P(t)\delta
\sigma(t,\Delta)X_{1}(t)I_{E_{\epsilon}}(t)\\
&  \;+(1-p(t)A(t))^{-1}p(t)\left[  \delta\sigma_{x}(t,\Delta)X_{1}%
(t)+\delta\sigma_{y}(t,\Delta)p(t)X_{1}(t)\right]  I_{E_{\epsilon}}(t).
\end{align*}

\end{lemma}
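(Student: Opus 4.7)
The plan is to imitate the structure of Lemma \ref{relation-y2} (which itself followed the template of Lemma \ref{lemma-y1}), but adapted to the linear–in–$z$ setting of Assumption \ref{assm-sig-small}. Concretely, I would set
\[
\tilde Y_2(t) := p(t)X_2(t) + \tfrac{1}{2}P(t)X_1(t)^2 + \hat Y(t), \qquad \tilde Z_2(t) := \mathbf{I}(t) + \hat Z(t),
\]
and try to show that the pair $(\tilde Y_2,\tilde Z_2)$ solves the same linear BSDE \eqref{new-form-y2-q-unbound} satisfied by $(Y_2,Z_2)$. The conclusion would then follow from the uniqueness part of Theorem \ref{est-fbsde-lp}, once one checks integrability of the data (ensured by Lemma \ref{est-one-order-q-unbound}, by boundedness of $p$ from Theorem \ref{exist-unique-BSDE}, by $q\in L_{\mathcal F}^{2,\beta}$ for every $\beta\ge 1$, and by the $L^2$-integrability of $Q$ provided by Theorem \ref{q-exp-th} cited just above the lemma).

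The core computation is to apply It\^o's formula to each of the three summands of $\tilde Y_2$: the product $p(t)X_2(t)$ using \eqref{eq-p-q-unbound} and \eqref{new-form-x2-q-unbound}; the quadratic term $\tfrac12 P(t)X_1(t)^2$ using \eqref{eq-P-sigmaz0} together with the SDE for $X_1$; and the auxiliary term $\hat Y(t)$ using \eqref{yhat-sigmaz0}. Collecting the $dB(t)$ coefficients, one reads off the diffusion coefficient; using the definitions of $K_1$ and $K_2$ together with Assumption \ref{assm-sig-small} (which makes the factor $1-p(t)A(t)$ invertible and justifies solving for $\tilde Z_2$), this diffusion coefficient should reduce exactly to $\sigma_x(t)X_2(t)+\sigma_y(t)\tilde Y_2(t)+A(t)\tilde Z_2(t)$ plus the appropriate spike terms stored inside $\mathbf{I}(t)$. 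Matching the spike terms uses the algebra equation \eqref{def-delt} in its specialized form under Assumption \ref{assm-sig-small}.

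The drift side is where the second-order adjoint equation \eqref{eq-P-sigmaz0} does its work. The $X_1^2$-contributions coming from the quadratic variation of $X_1$ and from the cross terms of It\^o's formula produce precisely $\tfrac12[1,p(t),K_1(t)]D^2 H(t)[1,p(t),K_1(t)]^{\intercal}X_1(t)^2$ plus terms involving $H_y$, $H_z K_2$ and the square of $D\sigma^{\intercal}[1,p,K_1]^{\intercal}$; these are cancelled by the drift of $P(t)$, which was constructed to do exactly that. The $I_{E_\epsilon}$-spike contributions then combine via the definition of $\mathcal H$ in \eqref{def-H} and the right-hand side of \eqref{yhat-sigmaz0} to produce $[q(t)\delta\sigma(t,\Delta)+\delta g(t,\Delta)]I_{E_\epsilon}(t)$ after absorbing the Hessian-of-$\sigma$ term $\tfrac12 P(t)\delta\sigma(t,\Delta)^2 I_{E_\epsilon}(t)$. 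The terminal condition $\tilde Y_2(T)=\phi_x(\bar X(T))X_2(T)+\tfrac12\phi_{xx}(\bar X(T))X_1(T)^2$ is immediate from $\hat Y(T)=0$ and \eqref{eq-P-sigmaz0}.

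The main obstacle is the bookkeeping of the algebra when inverting $(1-p(t)A(t))$ to extract $\tilde Z_2(t)$ from the diffusion coefficient: the spike contributions in $\mathbf I(t)$ (those multiplying $I_{E_\epsilon}$) involve $\delta\sigma_x(t,\Delta)$, $\delta\sigma_y(t,\Delta)$, and the $P(t)\delta\sigma(t,\Delta)X_1(t)$ cross term, and one must verify that they agree with the $dB(t)$-coefficient obtained from It\^o's formula after substitution. A secondary subtlety is that, since $Q(\cdot)$ may be unbounded, every $Q$-dependent term appearing in the drift must be shown to be integrable enough for the uniqueness argument, which is where Theorem \ref{q-exp-th} and the integrability of $q(\cdot)$ are used in tandem. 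Once these identifications are verified, uniqueness of the linear BSDE \eqref{new-form-y2-q-unbound} concludes the proof.
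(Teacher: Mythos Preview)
Your overall strategy—It\^{o}'s formula plus the adjoint equations \eqref{eq-p-q-unbound} and \eqref{eq-P-sigmaz0} plus uniqueness—is exactly the paper's approach (which just says ``apply the techniques in Lemma \ref{lemma-y1}''). However, there is a genuine logical wrinkle in the way you set up the verification.

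You define $\tilde Y_2$ using the \emph{actual} forward process $X_2$, and then compute $d\tilde Y_2$ using the coupled forward SDE \eqref{new-form-x2-q-unbound}. But that SDE has $b_y(t)Y_2(t)+b_z(t)Z_2(t)$ in the drift and $\sigma_y(t)Y_2(t)+A(t)Z_2(t)$ in the diffusion, so the It\^{o} differential $d(p(t)X_2(t))$ produces terms in the \emph{actual} $(Y_2,Z_2)$, not in $(\tilde Y_2,\tilde Z_2)$. Concretely, the diffusion of $\tilde Y_2$ contains $p(t)\sigma_y(t)Y_2(t)+p(t)A(t)Z_2(t)$, whereas your target $\tilde Z_2=\mathbf I(t)+\hat Z(t)$ is written in terms of $\hat Y,\hat Z$. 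These expressions coincide only if $Y_2=\tilde Y_2$ and $Z_2=\tilde Z_2$, which is precisely what you are trying to prove; the argument is circular as stated. (Your sentence that the diffusion ``should reduce to $\sigma_x X_2+\sigma_y\tilde Y_2+A\tilde Z_2$'' is also off: the diffusion of $\tilde Y_2$ should equal $\tilde Z_2$, not the diffusion of $X_2$.)

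The fix is small, and there are two equivalent ways to implement it. The one the paper has in mind (via Lemma \ref{lemma-y1}) is to first substitute the ansatz into \eqref{new-form-x2-q-unbound} to obtain a \emph{decoupled} linear SDE for a new process $\tilde X_2$, solve it, set $\tilde Y_2=p\tilde X_2+\tfrac12 PX_1^2+\hat Y$ and $\tilde Z_2=K_1\tilde X_2+\tfrac12 K_2 X_1^2+\cdots+\hat Z$, and then verify by It\^{o} that the triple $(\tilde X_2,\tilde Y_2,\tilde Z_2)$ solves the full FBSDE \eqref{new-form-x2-q-unbound}--\eqref{new-form-y2-q-unbound}; FBSDE uniqueness (Theorem \ref{est-fbsde-lp}) then gives $(\tilde X_2,\tilde Y_2,\tilde Z_2)=(X_2,Y_2,Z_2)$. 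Alternatively, you can stay with the actual $X_2$ but reverse the direction: set $\bar Y:=Y_2-pX_2-\tfrac12 PX_1^2$, compute $d\bar Y$ using the actual $(X_2,Y_2,Z_2)$, and check that $(\bar Y,\bar Z)$ satisfies exactly the auxiliary BSDE \eqref{yhat-sigmaz0}; uniqueness of that linear BSDE then yields $(\bar Y,\bar Z)=(\hat Y,\hat Z)$. Either route makes your remaining computations (cancellation of the $X_1^2$-terms via \eqref{eq-P-sigmaz0}, the spike bookkeeping, and the integrability checks you mention) go through without change.
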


\begin{proof}
Applying the techniques in Lemma \ref{lemma-y1}, we can deduce the above
relationship similarly.
\end{proof}

Combing the estimates in Lemma \ref{est-second-order-q-unbound} and the
relationship in Lemma \ref{relation-second-order-q-unbound}, we deduce that
\[
Y^{\epsilon}(0)-\bar{Y}(0)=Y_{1}(0)+Y_{2}(0)+o(\epsilon)=\hat{Y}%
(0)+o(\epsilon)\geq0.
\]
Define
\[%
\begin{array}
[c]{l}%
\mathcal{H}(t,x,y,z,u,p,q,P)=pb(t,x,y,z,u)+q\sigma(t,x,y,z,u)+\frac{1}%
{2}P(\sigma(t,x,y,z,u)-\sigma(t,\bar{X}(t),\bar{Y}(t),\bar{Z}(t),\bar
{u}(t)))^{2}\\
\text{ \ \ \ \ \ \ \ \ \ \ \ \ \ \ \ \ \ \ \ \ \ \ \ \ \ \ \ \ }%
\ +g(t,x,y,z+p(t)(\sigma(t,x,y,z,u)-\sigma(t,\bar{X}(t),\bar{Y}(t),\bar
{Z}(t),\bar{u}(t))),u).
\end{array}
\]
By the same analysis as in Theorem \ref{Th-MP}, we obtain the following
maximum principle.

\begin{theorem}
\label{th-mp-q-unboud}Suppose Assumptions \ref{assum-2}, \ref{assum-3} and
\ref{assm-sig-small} hold. Let $\bar{u}(\cdot)\in\mathcal{U}[0,T]$ be optimal
and $(\bar{X}(\cdot),\bar{Y}(\cdot),\bar{Z}(\cdot))$ be the corresponding
state processes of (\ref{state-eq}). Then the following stochastic maximum
principle holds:
\[
\mathcal{H}(t,\bar{X}(t),\bar{Y}(t),\bar{Z}(t),u,p(t),q(t),P(t))\geq
\mathcal{H}(t,\bar{X}(t),\bar{Y}(t),\bar{Z}(t),\bar{u}%
(t),p(t),q(t),P(t)),\ \ \ \forall u\in U,\ a.e.,\ a.s..
\]

\end{theorem}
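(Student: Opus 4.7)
The plan is to mimic the argument that established Theorem \ref{Th-MP}, but with every first- and second-order estimate replaced by the version valid under Assumption \ref{assm-sig-small}. All those replacements are already available in the paper: Lemma \ref{est-one-order-q-unbound} supplies the first-order estimates when $q(\cdot)$ may be unbounded, while Lemma \ref{est-second-order-q-unbound} together with Lemma \ref{relation-second-order-q-unbound} supplies the second-order estimates and the decomposition
\[
Y_{2}(t)=p(t)X_{2}(t)+\tfrac{1}{2}P(t)X_{1}^{2}(t)+\hat{Y}(t),\quad Z_{2}(t)=\mathbf{I}(t)+\hat{Z}(t).
\]
Since $X_{1}(0)=X_{2}(0)=0$ and $\hat{Y}$ solves \eqref{yhat-sigmaz0}, combining these ingredients with $Y_{1}(0)=p(0)X_{1}(0)=0$ yields
\[
J(u^{\epsilon}(\cdot))-J(\bar{u}(\cdot))=Y^{\epsilon}(0)-\bar{Y}(0)=\hat{Y}(0)+o(\epsilon),
\]
and the optimality of $\bar{u}(\cdot)$ therefore forces $\hat{Y}(0)\geq -o(\epsilon)$.

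Next I would introduce an auxiliary positive adjoint process $\gamma(\cdot)$ satisfying a linear SDE analogous to \eqref{eq-gamma}, with $\sigma_{z}(t)$ replaced by $A(t)$ and $(1-p(t)\sigma_{z}(t))^{-1}$ replaced by $(1-p(t)A(t))^{-1}$, chosen so that the drift of $\gamma(t)\hat{Y}(t)$ cancels the linear-in-$(\hat{Y},\hat{Z})$ terms in \eqref{yhat-sigmaz0}. Applying It\^{o}'s formula to $\gamma(t)\hat{Y}(t)$ and taking expectation yields the representation
\[
\hat{Y}(0)=\mathbb{E}\!\left[\int_{0}^{T}\gamma(t)\Bigl(\delta H(t,\Delta)+\tfrac{1}{2}P(t)\delta\sigma(t,\Delta)^{2}\Bigr)I_{E_{\epsilon}}(t)\,dt\right].
\]

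A short algebraic computation using the definition of $\mathcal{H}$ given just before the theorem, together with the algebra equation $\Delta(t)=(1-p(t)A(t))^{-1}p(t)\delta\sigma(t)$, then shows that the integrand above equals
\[
\gamma(t)\bigl[\mathcal{H}(t,\bar{X}(t),\bar{Y}(t),\bar{Z}(t),u(t),p(t),q(t),P(t))-\mathcal{H}(t,\bar{X}(t),\bar{Y}(t),\bar{Z}(t),\bar{u}(t),p(t),q(t),P(t))\bigr]I_{E_{\epsilon}}(t).
\]
Standard spike-variation arguments finish the proof: for fixed $u(\cdot)\in\mathcal{U}[0,T]$, by selecting $E_{\epsilon}$ as a Lebesgue differentiation family concentrating near an arbitrary $t_{0}\in[0,T]$ and dividing by $\epsilon$ before letting $\epsilon\downarrow 0$, one recovers the pointwise inequality in the statement for a.e.\ $t$ and a.s., thanks to $\gamma(t)>0$.

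The main technical obstacle is to make these manipulations rigorous when $q(\cdot)$ is only in $L_{\mathcal{F}}^{2,\beta}$ rather than bounded. Specifically, one must verify (i) that $\gamma(\cdot)$ has enough integrability for the It\^{o} computation—its diffusion coefficient involves $q(\cdot)$ through $K_{1}$ and $H_{z}$—and (ii) that the scalar $(1-p(t)A(t))^{-1}$ is well-defined and bounded so that $\Delta(\cdot)$, $K_{1}(\cdot)$, and $K_{2}(\cdot)$ enjoy the moment estimates used throughout. Both points are precisely what Assumption \ref{assm-sig-small} (smallness of $\|A(\cdot)\|_{\infty}$ together with boundedness of $p(\cdot)$) is designed to handle, and they are already exploited in Theorem \ref{appen-th-linear-fbsde}, Theorem \ref{q-exp-th}, and the three preparatory lemmas cited above; beyond these, the argument proceeds along the lines of Theorem \ref{Th-MP}.
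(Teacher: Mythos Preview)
Your proposal is correct and follows essentially the same route as the paper: the paper also reduces to $Y^{\epsilon}(0)-\bar{Y}(0)=\hat{Y}(0)+o(\epsilon)$ via Lemmas \ref{est-second-order-q-unbound} and \ref{relation-second-order-q-unbound}, and then simply says ``by the same analysis as in Theorem \ref{Th-MP}'', which is exactly the $\gamma$-duality argument you spell out. Your discussion of the integrability issues for $\gamma(\cdot)$ when $q(\cdot)$ is unbounded is a useful elaboration that the paper leaves implicit.
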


\subsection{The general case\label{sec-general}}

When Brownian motion in \eqref{state-eq} is $d$-dimensional, by similar
analysis as for $1$-dimensional case, we obtain the following results.

The state equation becomes
\begin{equation}
\left\{
\begin{array}
[c]{rl}%
dX(t)= & b(t,X(t),Y(t),Z(t),u(t))dt+\sigma^{\intercal}%
(t,X(t),Y(t),Z(t),u(t))dB(t)\\
dY(t)= & -g(t,X(t),Y(t),Z(t),u(t))dt+Z^{\intercal}(t)dB(t),\\
X(0)= & x_{0},\ Y(T)=\phi(X(T)),
\end{array}
\right.  \label{state-eq-multi}%
\end{equation}
where
\[
\sigma:[0,T]\times\mathbb{R}\times\mathbb{R}\times\mathbb{R}^{d}\times
U\rightarrow\mathbb{R}^{d}.
\]
The first-order adjoint equation is
\begin{equation}
\left\{
\begin{array}
[c]{rl}%
dp(t)= & -\left\{  g_{x}(t)+g_{y}(t)p(t)+\left\langle g_{z}(t),K_{1}%
(t)\right\rangle +(b_{x}(t)+b_{y}(t)p(t))p(t)\right. \\
& \left.  +p(t)\langle b_{z}(t),K_{1}(t)\rangle+\langle(\sigma_{x}%
(t)+\sigma_{y}(t)p(t)),q(t)\rangle+\langle q(t),\sigma_{z}(t)K_{1}%
(t)\rangle\right\}  dt+q^{\intercal}(t)dB(t),\\
p(T)= & \phi_{x}(\bar{X}(T)),
\end{array}
\right.  \label{eq-p-multi}%
\end{equation}
where
\[%
\begin{array}
[c]{rl}%
K_{1}(t) & =(I-p(t)\sigma_{z}(t))^{-1}\left[  p(t)(\sigma_{x}(t)+\sigma
_{y}(t)p(t))+q(t)\right]  \in\mathbb{R}^{d},\\
g_{z}(t) & :=(g_{z^{1}}(t),g_{z^{2}}(t),...,g_{z^{d}}(t))^{\intercal},\text{
}\\
b_{z}(t) & :=(b_{z^{1}}(t),b_{z^{2}}(t),...,b_{z^{d}}(t))^{\intercal},\\
\sigma_{z}(t) & =\left(
\begin{array}
[c]{c}%
\sigma_{z^{1}}^{1}(t),\sigma_{z^{2}}^{1}(t),...,\sigma_{z^{d}}^{1}(t)\\
\sigma_{z^{1}}^{2}(t),\sigma_{z^{2}}^{2}(t),...,\sigma_{z^{d}}^{2}(t)\\
\vdots\\
\sigma_{z^{1}}^{d}(t),\sigma_{z^{2}}^{d}(t),...,\sigma_{z^{d}}^{d}(t)
\end{array}
\right)  \in\mathbb{R}^{d\times d}.
\end{array}
\]

Denote the Hessian matrix of $\sigma^{i}(t)$ with respect to $(x,y,z^{1}%
,z^{2},...,z^{d})$ by $D^{2}\sigma^{i}$. Set%
\[
\lbrack1,p(t),K_{1}^{\intercal}(t)]D^{2}\sigma(t)[1,p(t),K_{1}^{\intercal
}(t)]^{\intercal}=\left(
\begin{array}
[c]{c}%
\lbrack1,p(t),K_{1}^{\intercal}(t)]D^{2}\sigma^{1}(t)[1,p(t),K_{1}^{\intercal
}(t)]^{\intercal}\\
\lbrack1,p(t),K_{1}^{\intercal}(t)]D^{2}\sigma^{2}(t)[1,p(t),K_{1}^{\intercal
}(t)]^{\intercal}\\
\vdots\\
\lbrack1,p(t),K_{1}^{\intercal}(t)]D^{2}\sigma^{d}(t)[1,p(t),K_{1}^{\intercal
}(t)]^{\intercal}%
\end{array}
\right)  \in\mathbb{R}^{d}.
\]

Then, the second-order adjoint equation is
\begin{equation}
\left\{
\begin{array}
[c]{rl}%
-dP(t)= & \left\{  P(t)[(\sigma_{x}(t)+p(t)\sigma_{y}(t)+\sigma_{z}%
(t)K_{1}(t))^{\intercal}(\sigma_{x}(t)+p(t)\sigma_{y}(t)+\sigma_{z}%
(t)K_{1}(t))\right. \\
& +2(b_{x}(t)+b_{y}(t)p(t)+\langle b_{z}(t),K_{1}(t)\rangle)]+2\langle
Q(t),(\sigma_{x}(t)+p(t)\sigma_{y}(t)+\sigma_{z}(t)K_{1}(t))\rangle\\
& +p(t)b_{y}(t)P(t)+p(t)[1,p(t),K_{1}^{\intercal}(t)]D^{2}b(t)[1,p(t),K_{1}%
^{\intercal}(t)]^{\intercal}\\
& +\langle q(t),\mathbf{[}\sigma_{y}(t)P(t)+[1,p(t),K_{1}^{\intercal}%
(t)]D^{2}\sigma(t)[1,p(t),K_{1}^{\intercal}(t)]^{\intercal}\mathbf{]}%
\rangle+g_{y}(t)P(t)\\
& \left.  +[I,p(t),K_{1}^{\intercal}(t)]D^{2}g(t)[I,p(t),K_{1}^{\intercal
}(t)]^{\intercal}+\left\langle g_{z}(t)+b_{z}(t)p(t),K_{2}(t)\right\rangle
+\langle q(t),\sigma_{z}(t)K_{2}(t)\rangle\right\}  dt\\
& -Q^{\intercal}(t)dB(t),\\
P(T)= & \phi_{xx}(\bar{X}(T)),
\end{array}
\right.  \label{eq-P-multi}%
\end{equation}
where
\[%
\begin{array}
[c]{ll}%
K_{2}(t)= & (I-p(t)\sigma_{z}(t))^{-1}p(t)\left\{  \sigma_{y}%
(t)P(t)+[1,p(t),K_{1}^{\intercal}(t)]D^{2}\sigma(t)[1,p(t),K_{1}^{\intercal
}(t)]^{\intercal}\right\} \\
& +(I-p(t)\sigma_{z}(t))^{-1}\{Q(t)+2P(t)(\sigma_{x}(t)+\sigma_{y}%
(t)p(t)+\sigma_{z}(t)K_{1}(t))\}\in\mathbb{R}^{d}.
\end{array}
\]

Define
\begin{equation}%
\begin{array}
[c]{ll}%
\mathcal{H}(t,x,y,z,u,p,q,P)= & pb(t,x,y,z+\Delta(t),u)+\langle q,\sigma
(t,x,y,z+\Delta(t),u)\rangle\\
& +\frac{1}{2}P(\sigma(t,x,y,z+\Delta(t),u)-\sigma(t,\bar{X}(t),\bar
{Y}(t),\bar{Z}(t),\bar{u}(t)))^{\intercal}\\
& \cdot(\sigma(t,x,y,z+\Delta(t),u)-\sigma(t,\bar{X}(t),\bar{Y}(t),\bar
{Z}(t),\bar{u}(t)))\\
& +g(t,x,y,z+\Delta(t),u),
\end{array}
\label{h-function-multi}%
\end{equation}

where $\Delta(t)\ $satisfies
\begin{equation}
\Delta(t)=p(t)(\sigma(t,\bar{X}(t),\bar{Y}(t),\bar{Z}(t)+\Delta(t),u)-\sigma
(t,\bar{X}(t),\bar{Y}(t),\bar{Z}(t),\bar{u}(t))),\;t\in\lbrack0,T].
\label{def-delt-multi}%
\end{equation}

Thus, we obtain the following maximum principle.

\begin{theorem}
Suppose Assumptions \ref{assum-2}, \ref{assum-3} and \ref{assm-q-bound} hold.
Let $\bar{u}(\cdot)\in\mathcal{U}[0,T]$ be optimal and $(\bar{X}(\cdot
),\bar{Y}(\cdot),\bar{Z}(\cdot))$ be the corresponding state processes of
(\ref{state-eq-multi}). Then the following stochastic maximum principle
holds:
\[
\mathcal{H}(t,\bar{X}(t),\bar{Y}(t),\bar{Z}(t),u,p(t),q(t),P(t))\geq
\mathcal{H}(t,\bar{X}(t),\bar{Y}(t),\bar{Z}(t),\bar{u}%
(t),p(t),q(t),P(t)),\ \ \ \forall u\in U,\ a.e.,\ a.s.,
\]
where $(p\left(  \cdot\right)  ,q\left(  \cdot\right)  )$, $\left(  P\left(
\cdot\right)  ,Q\left(  \cdot\right)  \right)  $ satisfy (\ref{eq-p-multi}),
(\ref{eq-P-multi}) respectively, and $\Delta(\cdot)$ satisfies
(\ref{def-delt-multi}).\ 
\end{theorem}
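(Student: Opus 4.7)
The plan is to repeat the scalar argument of Theorem \ref{Th-MP} componentwise, keeping careful track of vector/matrix dimensions. First, I would establish the multidimensional analogue of Lemma \ref{est-epsilon-bar} via Theorem \ref{est-fbsde-lp}, obtaining
\[
\mathbb{E}\!\left[\sup_{t\in[0,T]}\bigl(|X^{\epsilon}(t)-\bar X(t)|^{\beta}+|Y^{\epsilon}(t)-\bar Y(t)|^{\beta}\bigr)+\Bigl(\int_{0}^{T}|Z^{\epsilon}(t)-\bar Z(t)|^{2}dt\Bigr)^{\beta/2}\right]=O(\epsilon^{\beta/2})
\]
for $2\le\beta\le 8$. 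Next, I would solve the algebra equation \eqref{def-delt-multi} for $\Delta(t)\in\mathbb{R}^{d}$ by the same contraction-mapping argument as in Lemma \ref{delta-exist}; the crucial point is that $(I-p(t)\sigma_{z}(t))$ is invertible with uniformly bounded inverse, guaranteed by the smallness part of Assumption \ref{assum-3}.

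Then I would introduce the first-order variational FBSDE for $(X_{1},Y_{1},Z_{1})$ (now with an $\mathbb{R}^{d}$-valued diffusion) and, by applying It\^{o}'s formula to $p(t)X_{1}(t)$ with $(p,q)$ solving \eqref{eq-p-multi}, verify the relations
\[
Y_{1}(t)=p(t)X_{1}(t),\qquad Z_{1}(t)=K_{1}(t)X_{1}(t)+\Delta(t)I_{E_{\epsilon}}(t),
\]
exactly as in Lemma \ref{lemma-y1}. The vector $K_{1}(t)\in\mathbb{R}^{d}$ is the unique solution of the linear system $(I-p\sigma_{z})K_{1}=p(\sigma_{x}+\sigma_{y}p)+q$ recorded below \eqref{eq-p-multi}; existence of the adjoint pair follows from Theorem \ref{exist-unique-BSDE} and uniqueness from Theorem \ref{unique-pq}. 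The first-order estimates of Lemma \ref{est-one-order} transfer verbatim because only componentwise Lipschitz constants and Cauchy--Schwarz are used.

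For the second-order expansion, I would define $(X_{2},Y_{2},Z_{2})$ by the multidimensional analogues of \eqref{new-form-x2}--\eqref{new-form-y2}, with the scalar Hessian forms replaced by $[1,p(t),K_{1}^{\intercal}(t)]D^{2}\psi(t)[1,p(t),K_{1}^{\intercal}(t)]^{\intercal}$ for $\psi=b,\sigma,g$. Applying It\^{o}'s formula to $p(t)X_{2}(t)+\tfrac{1}{2}P(t)X_{1}(t)^{2}+\hat Y(t)$, where $(P,Q)$ solves \eqref{eq-P-multi} and $(\hat Y,\hat Z)$ is the vectorial analogue of \eqref{eq-y-hat}, yields
\[
Y_{2}(t)=p(t)X_{2}(t)+\tfrac{1}{2}P(t)X_{1}(t)^{2}+\hat Y(t),
\]
together with an explicit formula for $Z_{2}(t)$ involving $K_{2}(t)$ as recorded after \eqref{eq-P-multi}. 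Matching drift/diffusion coefficients forces the specific form of the BSDE \eqref{eq-P-multi}, and the orders $O(\epsilon)$, $o(\epsilon)$ of Lemma \ref{est-second-order} follow from the same duality-with-$(h,m,n)$ argument adapted to $d$-dimensional processes.

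Finally, the scalar expansion $Y^{\epsilon}(0)-\bar Y(0)=\hat Y(0)+o(\epsilon)\ge 0$ together with the adjoint equation \eqref{eq-gamma} gives, via It\^{o} applied to $\gamma(t)\hat Y(t)$,
\[
0\le \mathbb{E}\!\left[\int_{0}^{T}\gamma(t)\Bigl\{\delta H(t,\Delta)+\tfrac{1}{2}P(t)\langle\delta\sigma(t,\Delta),\delta\sigma(t,\Delta)\rangle\Bigr\}I_{E_{\epsilon}}(t)\,dt\right]+o(\epsilon).
\]
Since $\gamma(t)>0$ and $E_{\epsilon}$ is an arbitrary measurable subset of $[0,T]$ with $|E_{\epsilon}|=\epsilon$, a standard Lebesgue-point argument localizes the inequality to a.e.\ $(t,\omega)$ and, recalling the definition \eqref{h-function-multi} of $\mathcal{H}$, identifies it with the desired inequality $\mathcal{H}(t,\bar X(t),\bar Y(t),\bar Z(t),u,p(t),q(t),P(t))\ge\mathcal{H}(t,\bar X(t),\bar Y(t),\bar Z(t),\bar u(t),p(t),q(t),P(t))$ for every $u\in U$. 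The main obstacle beyond the scalar case is algebraic rather than probabilistic: one must verify that the matrix $I-p(t)\sigma_{z}(t)$ is invertible with uniformly bounded inverse, so that $K_{1}$, $K_{2}$, and $\Delta$ are well defined and admit the same $L^{p}$-controls used throughout the proof; this is precisely what the smallness condition on $L_{3}$ in Assumption \ref{assum-3} provides.
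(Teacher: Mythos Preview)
Your proposal is correct and follows exactly the approach the paper itself takes: the paper does not give a separate proof for the $d$-dimensional theorem but simply states that ``by similar analysis as for the $1$-dimensional case, we obtain the following results,'' and your sketch is precisely that similar analysis carried out componentwise with the necessary matrix bookkeeping. The one point you rightly single out---invertibility of $I-p(t)\sigma_{z}(t)$ with uniformly bounded inverse---is exactly the algebraic check the paper implicitly relies on, and it follows from $|p(t)|\le s(0)\vee(-l(0))$ together with \eqref{assum-cl} via a Neumann-series argument.
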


\begin{remark}
The above theorem still hold under Assumptions \ref{assum-2}, \ref{assum-3}
and \ref{assm-sig-small}.
\end{remark}

\section{A linear quadratic control problem}

In this section, we study a linear quadratic control problem by the results in
the section 3. For simplicity of presentation, we suppose all the processes
are one dimensional.

Consider the following linear forward-backward stochastic control system
\begin{equation}
\left\{
\begin{array}
[c]{rcl}%
dX(t) & = & [A_{1}(t)X(t)+B_{1}(t)Y(t)+C_{1}(t)Z(t)+D_{1}(t)u(t)]dt\\
&  & +[A_{2}(t)X(t)+B_{2}(t)Y(t)+C_{2}(t)Z(t)+D_{2}(t)u(t)]dB(t),\\
dY(t) & = & -[A_{3}(t)X(t)+B_{3}(t)Y(t)+C_{3}(t)Z(t)+D_{3}%
(t)u(t)]dt+Z(t)dB(t),\\
X(0) & = & x_{0},\ Y(T)=FX(T)+J,
\end{array}
\right.  \label{state-lq}%
\end{equation}
and minimizing the following cost functional
\begin{equation}
J(u(\cdot))=\mathbb{E}\left[  \int_{0}^{T}\left(  A_{4}(t)X(t)^{2}%
+B_{4}(t)Y(t)^{2}+C_{4}(t)Z(t)^{2}+D_{4}(t)u(t)^{2}\right)  dt+GX(T)^{2}%
+Y(0)^{2}\right]  , \label{cost-lq}%
\end{equation}
where $A_{i}$, $B_{i}$, $C_{i}$, $D_{i}$ $i=1,2,3,4$ are deterministic
$\mathbb{R}$-valued functions, $F$, $G$ are deterministic constants and $J$ is
$\mathcal{F}_{T}$-measurable bounded random variable. Let $\bar{u}(\cdot)$ be
the optimal control, and the corresponding optimal state is $(\bar{X}%
(\cdot),\bar{Y}(\cdot),\bar{Z}(\cdot))$.

The variational equation becomes
\[
\left\{
\begin{array}
[c]{l}%
d\left(  X_{1}(t)+X_{2}(t)\right) \\
\text{ }=[A_{1}(t)\left(  X_{1}(t)+X_{2}(t)\right)  +B_{1}(t)\left(
Y_{1}(t)+Y_{2}(t)\right)  +C_{1}(t)\left(  Z_{1}(t)+Z_{2}(t)\right)
+D_{1}(t)(u^{\epsilon}(t)-\bar{u}(t))]dt\\
\text{ \ \ }+[A_{2}(t)\left(  X_{1}(t)+X_{2}(t)\right)  +B_{2}(t)\left(
Y_{1}(t)+Y_{2}(t)\right)  +C_{2}(t)\left(  Z_{1}(t)+Z_{2}(t)\right)
+D_{2}(t)(u^{\epsilon}(t)-\bar{u}(t))]dB(t),\\
d\left(  Y_{1}(t)+Y_{2}(t)\right) \\
\text{ }=-[A_{3}(t)\left(  X_{1}(t)+X_{2}(t)\right)  +B_{3}(t)\left(
Y_{1}(t)+Y_{2}(t)\right)  +C_{3}(t)\left(  Z_{1}(t)+Z_{2}(t)\right)
+D_{3}(t)(u^{\epsilon}(t)-\bar{u}(t))]dt\\
\ \ \ +\left(  Z_{1}(t)+Z_{2}(t)\right)  dB(t),\\
X_{1}(0)+X_{2}(0)=0,\ Y_{1}(T)+Y_{2}(T)=F\left(  X_{1}(T)+X_{2}(T)\right)  ,
\end{array}
\right.
\]
and the first order adjoint equation is
\begin{equation}
\left\{
\begin{array}
[c]{rl}%
dp(t)= & -\left\{  A_{3}(t)+B_{3}(t)p(t)+C_{3}(t)K_{1}(t)+A_{1}(t)p(t)+B_{1}%
(t)p^{2}(t)\right. \\
& \left.  +C_{1}(t)K_{1}(t)p(t)+A_{2}(t)q(t)+B_{2}(t)p(t)q(t)+C_{2}%
(t)K_{1}(t)q(t)\right\}  dt+q(t)dB(t),\\
p(T)= & F,
\end{array}
\right.  \label{eq-p-lq}%
\end{equation}
where
\[
K_{1}(t)=(1-p(t)C_{2}(t))^{-1}\left[  A_{2}(t)p(t)+B_{2}(t)p^{2}%
(t)+q(t)\right]  .
\]
This adjoint equation is a nonlinear backward stochastic differential equation
with deterministic coefficients and the solution to \eqref{eq-p-lq} is
$(p(\cdot),0)$, which $p(\cdot)$ satisfies the following ODE
\begin{equation}
\left\{
\begin{array}
[c]{rl}%
dp(t)= & -\left\{  A_{3}(t)+B_{3}(t)p(t)+C_{3}(t)K_{1}(t)+A_{1}(t)p(t)+B_{1}%
(t)p^{2}(t)+C_{1}(t)K_{1}(t)p(t)\right\}  dt,\\
p(T)= & F,
\end{array}
\right.  \label{eq-p-lq-ode}%
\end{equation}
with
\[
K_{1}(t)=(1-p(t)C_{2}(t))^{-1}\left[  A_{2}(t)p(t)+B_{2}(t)p^{2}(t)\right]  .
\]

\begin{remark}
It should be note that in our context the Assumption \ref{assm-q-bound} holds.
\end{remark}

Moreover, $\Delta(t)$ has the following explicitly form
\[
\Delta(t)=(1-p(t)C_{2}(t))^{-1}p(t)D_{2}(t)(u(t)-\bar{u}(t)).
\]
Since $\bar{u}(\cdot)$ is the optimal control,
\begin{equation}
J(u^{\epsilon}(\cdot))-J(\bar{u}(\cdot))\geq0. \label{eq-lq-cost}%
\end{equation}
By Lemma \ref{lemma-est-sup}, the following estimates hold,
\[%
\begin{array}
[c]{lll}%
X^{\epsilon}(t)-\bar{X}(t) & = & X_{1}(t)+X_{2}(t)+o(\epsilon),\\
Y^{\epsilon}(t)-\bar{Y}(t) & = & Y_{1}(t)+Y_{2}(t)+o(\epsilon),\\
Z^{\epsilon}(t)-\bar{Z}(t) & = & Z_{1}(t)+Z_{2}(t)+o(\epsilon).
\end{array}
\]
We can expand (\ref{eq-lq-cost}) term by term as follows
\[%
\begin{array}
[c]{l}%
\mathbb{E}\left[  \int_{0}^{T}A_{4}(t)\left(  X^{\epsilon}(t)^{2}-\bar
{X}(t)^{2}\right)  dt\right] \\
\text{ }=\mathbb{E}\left\{  \int_{0}^{T}\left[  2A_{4}(t)\bar{X}(t)\left(
X_{1}(t)+X_{2}(t)\right)  +A_{4}(t)X_{1}(t)^{2}\right]  dt\right\}
+o(\epsilon).
\end{array}
\]
Similarly, one has
\[%
\begin{array}
[c]{rl}%
\mathbb{E}\left[  \int_{0}^{T}B_{4}(t)\left(  Y^{\epsilon}(t)^{2}-\bar
{Y}(t)^{2}\right)  dt\right]  & =\mathbb{E}\left\{  \int_{0}^{T}\left[
2B_{4}(t)\bar{Y}(t)\left(  Y_{1}(t)+Y_{2}(t)\right)  +B_{4}(t)Y_{1}%
(t)^{2}\right]  dt\right\}  +o(\epsilon);\\
\mathbb{E}\left[  G\left(  X^{\epsilon}(T)^{2}-\bar{X}(T)^{2}\right)  \right]
& =\mathbb{E}\left[  2G\bar{X}(T)\left(  X_{1}(T)+X_{2}(T)\right)
+GX_{1}(T)^{2}\right]  +o(\epsilon);\\
Y^{\varepsilon}(0)^{2}-\bar{Y}(0)^{2} & =2\bar{Y}(0)\left(  Y_{1}%
(0)+Y_{2}(0)\right)  +Y_{1}(0)^{2}+o(\epsilon);\\
\mathbb{E}\left[  \int_{0}^{T}C_{4}(t)\left(  Z^{\epsilon}(t)^{2}-\bar
{Z}(t)^{2}\right)  dt\right]  & =\mathbb{E}\left\{  \int_{0}^{T}\left[
2C_{4}(t)\bar{Z}(t)\left(  Z_{1}(t)+Z_{2}(t)\right)  +C_{4}(t)Z_{1}%
(t)^{2}\right]  dt\right\}  +o(\epsilon).\
\end{array}
\]
Thus
\[%
\begin{array}
[c]{l}%
J(u^{\epsilon}(\cdot))-J(\bar{u}(\cdot))\\
=\mathbb{E}\left\{  \int_{0}^{T}\left[  2A_{4}(t)\bar{X}(t)\left(
X_{1}(t)+X_{2}(t)\right)  +2B_{4}(t)\bar{Y}(t)\left(  Y_{1}(t)+Y_{2}%
(t)\right)  +2C_{4}(t)\bar{Z}(t)\left(  Z_{1}(t)+Z_{2}(t)\right)  \right.
\right. \\
\left.  +A_{4}(t)X_{1}(t)^{2}+B_{4}(t)Y_{1}(t)^{2}+C_{4}(t)Z_{1}(t)^{2}%
+2D_{4}(t)\bar{u}(t)\left(  u^{\epsilon}(t)-\bar{u}(t)\right)  +D_{4}%
(t)\left(  u^{\epsilon}(t)-\bar{u}(t)\right)  ^{2}\right]  dt\\
\left.  +2G\bar{X}(T)\left(  X_{1}(T)+X_{2}(T)\right)  +GX_{1}(T)^{2}+2\bar
{Y}(0)\left(  Y_{1}(0)+Y_{2}(0)\right)  +Y_{1}(0)^{2}\right\}  +o(\epsilon).
\end{array}
\]
\ \ \ \ \ $\ \ $Introduce the adjoint equation for $X_{1}(t)+X_{2}%
(t),Y_{1}(t)+Y_{2}(t),Z_{1}(t)+Z_{2}(t)$ as
\[
\left\{
\begin{array}
[c]{rl}%
dh(t)= & \left[  B_{3}(t)h(t)+B_{1}(t)m(t)+B_{2}(t)n(t)+2B_{4}(t)Y(t)\right]
dt\\
& +\left[  C_{3}(t)h(t)+C_{1}(t)m(t)+C_{2}(t)n(t)+2C_{4}(t)Z(t)\right]
dB(t),\\
h(0)= & 2\bar{Y}(0),\\
dm(t)= & -\left[  A_{3}(t)h(t)+A_{1}(t)m(t)+A_{2}(t)n(t)+2A_{4}(t)X(t)\right]
dt+n(t)dB(t),\\
m(T)= & 2G\bar{X}(T)+Fh(T).
\end{array}
\right.
\]
Applying It\^{o}'s formula to $m(t)\left(  X_{1}(t)+X_{2}(t)\right)
-h(t)\left(  Y_{1}(t)+Y_{2}(t)\right)  $, we get
\[%
\begin{array}
[c]{l}%
J(u^{\epsilon}(\cdot))-J(\bar{u}(\cdot))\\
=\mathbb{E}\left\{  \int_{0}^{T}\left[  \left(  D_{1}(t)m(t)+D_{2}%
(t)n(t)+D_{3}(t)h(t)+2D_{4}(t)\bar{u}(t)\right)  \left(  u^{\epsilon}%
(t)-\bar{u}(t)\right)  \right.  \right. \\
\left.  \left.  +A_{4}(t)X_{1}(t)^{2}+B_{4}(t)Y_{1}(t)^{2}+C_{4}%
(t)Z_{1}(t)^{2}+D_{4}(t)\left(  u^{\epsilon}(t)-\bar{u}(t)\right)
^{2}\right]  dt+GX_{1}(T)^{2}+Y_{1}(0)^{2}\right\}  +o(\epsilon).
\end{array}
\]
Noting that the relationship between $X_{1}(t),Y_{1}(t)$ and $Z_{1}(t),$%
\[%
\begin{array}
[c]{ll}%
Y_{1}(t)= & p(t)X_{1}(t),\\
Z_{1}(t)= & K_{1}(t)X_{1}(t)+\Delta(t)I_{E_{\epsilon}}(t),
\end{array}
\]
thus,%
\[%
\begin{array}
[c]{l}%
J(u^{\epsilon}(\cdot))-J(\bar{u}(\cdot))\\
=\mathbb{E}\left\{  \int_{0}^{T}\left[  \left(  A_{4}(t)+B_{4}(t)p(t)^{2}%
+C_{4}(t)K_{1}(t)^{2}\right)  X_{1}(t)^{2}+C_{4}(t)\Delta(t)^{2}%
I_{E_{\epsilon}}(t)+D_{4}(t)\left(  u^{\epsilon}(t)-\bar{u}(t)\right)
^{2}\right.  \right. \\
\text{ \ \ \ }\left.  \left.  +\left(  D_{1}(t)m(t)+D_{2}(t)n(t)+D_{3}%
(t)h(t)+2D_{4}(t)\bar{u}(t)\right)  \left(  u^{\epsilon}(t)-\bar{u}(t)\right)
\right]  dt+GX_{1}(T)^{2}\right\}  +o(\epsilon).
\end{array}
\]
$\ \ \ $Introducing the adjoint equation for $X_{1}(t)^{2}$,
\begin{equation}
\left\{
\begin{array}
[c]{rl}%
-dP(t)= & \left[  R_{1}(t)P(t)+R_{2}(t)Q(t)+A_{4}(t)+B_{4}(t)p(t)^{2}%
+C_{4}(t)K_{1}(t)^{2}\right]  dt-Q(t)dB(t),\\
P(T)= & G,
\end{array}
\right.
\end{equation}
where
\[%
\begin{array}
[c]{ll}%
R_{1}(t)= & 2\left(  A_{1}(t)+B_{1}(t)p(t)+C_{1}(t)K_{1}(t)\right)  +\left(
A_{2}(t)+B_{2}(t)p(t)+C_{2}(t)K_{1}(t)\right)  ^{2},\\
R_{2}(t)= & 2\left(  A_{2}(t)+B_{2}(t)p(t)+C_{2}(t)K_{1}(t)\right)  .
\end{array}
\]
Similar to $p(\cdot)$, the solution to $(P(\cdot),Q(\cdot))$ is $(P(\cdot
),0)$, which $P(\cdot)$ satisfies the following ODE,
\begin{equation}
\left\{
\begin{array}
[c]{rl}%
-dP(t)= & \left[  R_{1}(t)P(t)+A_{4}(t)+B_{4}(t)p(t)^{2}+C_{4}(t)K_{1}%
(t)^{2}\right]  dt,\\
P(T)= & G,
\end{array}
\right.
\end{equation}
where
\[%
\begin{array}
[c]{ll}%
R_{1}(t)= & 2\left(  A_{1}(t)+B_{1}(t)p(t)+C_{1}(t)K_{1}(t)\right)  +\left(
A_{2}(t)+B_{2}(t)p(t)+C_{2}(t)K_{1}(t)\right)  ^{2}.\\
&
\end{array}
\]
We obtain
\[%
\begin{array}
[c]{l}%
J(u^{\epsilon}(\cdot))-J(\bar{u}(\cdot))\\
=\mathbb{E}\left\{  \int_{0}^{T}\left[  \left(  D_{1}(t)m(t)+D_{2}%
(t)n(t)+D_{3}(t)h(t)+2D_{4}(t)\bar{u}(t)\right)  \left(  u^{\epsilon}%
(t)-\bar{u}(t)\right)  \right.  \right. \\
\left.  \left.  +P(t)D_{2}(t)^{2}(u^{\epsilon}(t)-\bar{u}(t))^{2}%
+D_{4}(t)\left(  u^{\epsilon}(t)-\bar{u}(t)\right)  ^{2}+C_{4}(t)\Delta
(t)^{2}I_{E_{\epsilon}}(t)\right]  dt\right\}  +o(\epsilon).
\end{array}
\]
Thus, we obtain the following maximum principle for (\ref{state-lq}%
)-(\ref{cost-lq}).

\begin{theorem}
\label{th-mp-lq}Suppose Assumptions \ref{assum-2} and \ref{assum-3} hold. Let
$\bar{u}(\cdot)\in\mathcal{U}[0,T]$ be optimal and $(\bar{X}(\cdot),\bar
{Y}(\cdot),\bar{Z}(\cdot))$ be the corresponding state processes of
(\ref{state-lq}). Then the following stochastic maximum principle holds:
\[%
\begin{array}
[c]{l}%
\left(  D_{1}(t)m(t)+D_{2}(t)n(t)+D_{3}(t)h(t)+2D_{4}(t)\bar{u}(t)\right)
(u-\bar{u}(t))\\
+\left[  \frac{C_{4}(t)p(t)^{2}D_{2}(t)^{2}}{\left(  1-p(t)C_{2}(t)\right)
^{2}}+D_{4}(t)+P(t)D_{2}(t)^{2}\right]  (u-\bar{u}(t))^{2}\geq0,\ \forall u\in
U,\ a.e.,\ a.s..
\end{array}
\]

\end{theorem}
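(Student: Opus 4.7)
The plan is to combine the cost--difference expansion that has just been derived in this subsection with the explicit formula for $\Delta(t)$, and then to extract the pointwise inequality by the standard spike--variation/Lebesgue--point argument.

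First, I would invoke the expansion obtained immediately above the statement:
\[
J(u^{\epsilon}(\cdot))-J(\bar{u}(\cdot)) = \mathbb{E}\Bigl\{\int_{0}^{T}\Bigl[\bigl(D_{1}(t)m(t)+D_{2}(t)n(t)+D_{3}(t)h(t)+2D_{4}(t)\bar{u}(t)\bigr)(u^{\epsilon}(t)-\bar{u}(t)) + \bigl(D_{4}(t)+P(t)D_{2}(t)^{2}\bigr)(u^{\epsilon}(t)-\bar{u}(t))^{2} + C_{4}(t)\Delta(t)^{2}I_{E_{\epsilon}}(t)\Bigr]dt\Bigr\}+o(\epsilon),
\]
together with the optimality inequality $J(u^{\epsilon}(\cdot))-J(\bar{u}(\cdot))\ge 0$. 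This rests entirely on the machinery already established (Lemma \ref{lemma-est-sup} for the second--order estimates, the duality computation with $(h,m,n)$ and $(P,Q)$, and Lemma \ref{lemma-y1} which gives $Y_{1}=pX_{1}$ and $Z_{1}=K_{1}X_{1}+\Delta I_{E_{\epsilon}}$).

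Next I would substitute the explicit form $\Delta(t)=(1-p(t)C_{2}(t))^{-1}p(t)D_{2}(t)(u(t)-\bar{u}(t))$ derived earlier in this section, so that
\[
C_{4}(t)\Delta(t)^{2}I_{E_{\epsilon}}(t)=\frac{C_{4}(t)p(t)^{2}D_{2}(t)^{2}}{(1-p(t)C_{2}(t))^{2}}(u(t)-\bar{u}(t))^{2}I_{E_{\epsilon}}(t).
\]
Since $u^{\epsilon}(t)-\bar{u}(t)=(u(t)-\bar{u}(t))I_{E_{\epsilon}}(t)$, every integrand is supported on $E_{\epsilon}$, and the inequality $J(u^{\epsilon})\ge J(\bar{u})$ becomes
\[
\mathbb{E}\Bigl\{\int_{E_{\epsilon}}\!\bigl[\Phi_{1}(t)(u(t)-\bar{u}(t))+\Phi_{2}(t)(u(t)-\bar{u}(t))^{2}\bigr]dt\Bigr\}+o(\epsilon)\ge 0,
\]
with $\Phi_{1},\Phi_{2}$ being precisely the coefficients appearing in the theorem.

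Finally, I would divide by $\epsilon$ and let $\epsilon\downarrow 0$ along sets $E_{\epsilon}$ shrinking to a Lebesgue point $t_{0}$: by fixing an arbitrary $v\in U$ and defining $u(t)\equiv v$ on $E_{\epsilon}$, $u(t)=\bar{u}(t)$ elsewhere, the Lebesgue differentiation theorem yields the pointwise inequality at a.e. $t$, almost surely. The main technical obstacle is the usual measurability/separability issue in going from ``for each fixed $u\in U$'' to ``$\forall u\in U$, a.e., a.s.'': this is handled by choosing a countable dense subset of $U$ (and exploiting continuity of $\Phi_{1},\Phi_{2}$ in $u$, which is trivial here since the control enters only linearly or through $D_{i}$), so that the exceptional null set can be taken independent of $u$. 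Everything else is an algebraic rearrangement of terms already computed in the preceding derivation.
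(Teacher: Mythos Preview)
Your proposal is correct and follows the paper's own route: the paper's ``proof'' of Theorem~\ref{th-mp-lq} is precisely the derivation in Section~4 culminating in the displayed expansion of $J(u^{\epsilon})-J(\bar u)$ that you quote, after which the paper simply states the theorem. You have merely made explicit the final spike--variation/Lebesgue--point step (dividing by $\epsilon$, shrinking $E_{\epsilon}$, passing to a countable dense subset of $U$) that the paper leaves implicit, so there is no substantive difference in approach.
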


\begin{remark}
Using Theorem \ref{th-mp-q-unboud}, we can also consider linear quadratic
control problem with random coefficients.
\end{remark}

Now, we give an example to show the difference between the global and local
maximum principle.

\begin{example}
Consider the following linear forward-backward stochastic control system
\begin{equation}
\left\{
\begin{array}
[c]{rcl}%
dX(t) & = & [aZ(t)+bu(t)]dB(t),\\
dY(t) & = & -cu(t)dt+Z(t)dB(t),\\
X(0) & = & 1,\ Y(T)=dX(T),
\end{array}
\right.
\end{equation}
and minimizing the following cost functional
\[
J(u(\cdot))=\mathbb{E}\left[  \int_{0}^{T}u(t)^{2}dt\right]  +Y(0)^{2},
\]
where $a$, $b$, $c$, $d$ are constants such that, $0<\left\vert 2cd\right\vert
\leq1$ and $ad<1$, and $U=\left\{  -1,0,1\right\}  $. Let $\bar{u}(\cdot)$ be
the optimal control, and the corresponding optimal state is $(\bar{X}%
(\cdot),\bar{Y}(\cdot),\bar{Z}(\cdot))$. In this case $p(t)=d$, $q(t)=0$,
$P(t)=Q(t)=0$, $h(t)=2\bar{Y}(0)$, $m(t)=2d\bar{Y}(0)$, $n(t)=0$, for
$t\in\left[  0,T\right]  $. The maximum principle by Theorem \ref{th-mp-lq}
is
\begin{equation}
2\left(  c\bar{Y}(0)+\bar{u}(t)\right)  (u-\bar{u}(t))+(u-\bar{u}(t))^{2}%
\geq0,\ \forall u\in U\ a.e.,\ a.s.. \label{mp-exp}%
\end{equation}
Noting that $\bar{Y}(0)=d$ for $\bar{u}(t)=0$ , then it is easy to check that
$\bar{u}(t)=0$ satisfies the maximum principle \eqref{mp-exp}. Furthermore, we
can prove $\bar{u}(t)=0$ is the optimal control. For each $u\left(
\cdot\right)  \in\mathcal{U}[0,T]$,
\[
Y(0)=\mathbb{E}\left[  dX(T)+\int_{0}^{T}cu(t)dt\right]  =d+\mathbb{E}\left[
\int_{0}^{T}cu(t)dt\right]  .
\]
Then
\[%
\begin{array}
[c]{rl}%
J(u(\cdot))-J(\bar{u}(\cdot))= & \mathbb{E}\left[  \int_{0}^{T}u(t)^{2}%
dt\right]  +Y(0)^{2}-d^{2}\\
= & \left(  \mathbb{E}\left[  \int_{0}^{T}cu(t)dt\right]  \right)
^{2}+\mathbb{E}\left[  \int_{0}^{T}\left(  u(t)^{2}+2cdu(t)\right)  dt\right]
\\
\geq & 0,
\end{array}
\]
which implies $\bar{u}(\cdot)=0$ is optimal. \newline\ \ When the control
domain is $U=[-1,1]$, similar to Corollary \ref{cor-mp-convex}, by Theorem
\ref{th-mp-lq} we obtain the following maximum principle,
\begin{equation}
2\left(  c\bar{Y}(0)+\bar{u}(t)\right)  (u-\bar{u}(t))\geq0,\ \forall u\in
U\ a.e.,\ a.s.. \label{mp-exp-convex}%
\end{equation}
It is obvious that $\bar{u}(\cdot)=0$ does not satisfy the maximum principle \eqref{mp-exp-convex}.
\end{example}

\section{Appendix}

\subsection{$L^{p}$-estimate of decoupled FBSDEs}

The following Lemma is a combination of Theorem 3.17 and Theorem 5.17 in
\cite{Pardoux-book}.

\begin{lemma}
\label{sde-bsde}For each fixed $p>1$ and a pair of adapted stochastic process
$(y(\cdot),z(\cdot))$, consider the following system
\begin{equation}
\left\{
\begin{array}
[c]{rl}%
dX(t)= & b(t,X(t),y(t),z(t))dt+\sigma(t,X(t),y(t),z(t))dB(t),\\
dY(t)= & -g(t,X(t),Y(t),Z(t))dt+Z(t)dB(t),\\
X(0)= & x_{0},\ Y(t)=\phi(X(T)),
\end{array}
\right.  \label{fbsde-pardoux}%
\end{equation}
where $b$, $\sigma$, $g$, $\phi$ are the same in equation (\ref{fbsde}). If
the coefficients satisfy

(i) $b(\cdot,0,y(\cdot),z(\cdot))$, $\sigma(\cdot,0,y(\cdot),z(\cdot))$,
$g(\cdot,0,0,0)$ are $\mathbb{F}$-adapted processes and
\[
\mathbb{E}\left\{  |\phi(0)|^{p}+\left(  \int_{0}^{T}\left[
|b(t,0,y(t),z(t))|+|g(t,0,0,0)|\right]  dt\right)  ^{p}+\left(  \int_{0}%
^{T}|\sigma(t,0,y(t),z(t))|^{2}dt\right)  ^{\frac{p}{2}}\right\}  <\infty,
\]

(ii)%
\[%
\begin{array}
[c]{rl}%
|\psi(t,x_{1},y,z)-\psi(t,x_{2},y,z)| & \leq L_{1}|x_{1}-x_{2}|,\ \ \text{for
}\ \psi=b,\sigma;\\
|g(t,x_{1},y_{1},z_{1})-g(t,x_{2},y_{2},z_{2})| & \leq L_{1}(|x_{1}%
-x_{2}|+|y_{1}-y_{2}|+|z_{1}-z_{2}|),
\end{array}
\]
then (\ref{fbsde-pardoux}) has a unique solution $(X(\cdot),Y(\cdot
),Z(\cdot))\in L_{\mathcal{F}}^{p}(\Omega;C([0,T],\mathbb{R}^{n}))\times
L_{\mathcal{F}}^{p}(\Omega;C([0,T],\mathbb{R}^{m}))\times L_{\mathcal{F}%
}^{2,p}([0,T];\mathbb{R}^{m\times d})$ and there exists a constant $C_{p}$
which only depends on $L_{1}$, $p$, $T$ such that
\[%
\begin{array}
[c]{l}%
\mathbb{E}\left\{  \sup\limits_{t\in\lbrack0,T]}\left[  |X(t)|^{p}%
+|Y(t)|^{p}\right]  +\left(  \int_{0}^{T}|Z(t)|^{2}dt\right)  ^{\frac{p}{2}%
}\right\} \\
\ \leq C_{p}\mathbb{E}\left\{  \left[  \int_{0}^{T}\left(
|b(t,0,y(t),z(t))|+|g(t,0,0,0)|\right)  dt\right]  ^{p}+\left(  \int_{0}%
^{T}|\sigma(t,0,y(t),z(t))|^{2}dt\right)  ^{\frac{p}{2}}+|\phi(0)|^{p}%
+|x_{0}|^{p}\right\}  .
\end{array}
\]

\end{lemma}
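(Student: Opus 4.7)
The strategy is to decouple the system: with $(y,z)$ fixed, the forward SDE for $X$ no longer involves $(Y,Z)$, so I can solve it first by classical $L^p$-theory for SDEs (Theorem 3.17 of \cite{Pardoux-book}), and then feed the resulting $X(\cdot)$ into the BSDE for $(Y,Z)$ and apply classical $L^p$-theory for BSDEs (Theorem 5.17 of \cite{Pardoux-book}). The final estimate follows by substituting the SDE bound into the BSDE bound.

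First I would treat the forward equation as an SDE in $X$ with coefficients $\tilde{b}(t,x):=b(t,x,y(t),z(t))$ and $\tilde{\sigma}(t,x):=\sigma(t,x,y(t),z(t))$. By hypothesis (ii), $\tilde{b}$ and $\tilde{\sigma}$ are uniformly Lipschitz in $x$ with constant $L_1$, and hypothesis (i) ensures $\tilde{b}(\cdot,0)\in L^{1,p}_{\mathcal{F}}$ and $\tilde{\sigma}(\cdot,0)\in L^{2,p}_{\mathcal{F}}$. Theorem 3.17 of \cite{Pardoux-book} thus gives a unique $X(\cdot)\in L^p_{\mathcal{F}}(\Omega;C([0,T],\mathbb{R}^n))$ satisfying
\[
\mathbb{E}\Big[\sup_{t\in[0,T]}|X(t)|^p\Big]\leq C^{(1)}_p\,\mathbb{E}\Big[|x_0|^p+\Big(\int_0^T|b(t,0,y(t),z(t))|\,dt\Big)^p+\Big(\int_0^T|\sigma(t,0,y(t),z(t))|^2\,dt\Big)^{p/2}\Big],
\]
where $C^{(1)}_p$ depends only on $L_1$, $p$, $T$.

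Second, I would substitute this $X$ into the BSDE, viewed as a BSDE with driver $\hat{g}(t,y,z):=g(t,X(t),y,z)$ and terminal value $\phi(X(T))$. Hypothesis (ii) makes $\hat{g}$ uniformly Lipschitz in $(y,z)$ with constant $L_1$, while
\[
|\hat{g}(t,0,0)|\leq |g(t,0,0,0)|+L_1|X(t)|,\qquad |\phi(X(T))|\leq |\phi(0)|+L_1|X(T)|,
\]
so the SDE estimate above together with hypothesis (i) verifies the $L^p$-integrability required by Theorem 5.17 of \cite{Pardoux-book}. That theorem produces a unique $(Y(\cdot),Z(\cdot))\in L^p_{\mathcal{F}}(\Omega;C([0,T],\mathbb{R}^m))\times L^{2,p}_{\mathcal{F}}([0,T];\mathbb{R}^{m\times d})$ with
\[
\mathbb{E}\Big[\sup_{t\in[0,T]}|Y(t)|^p+\Big(\int_0^T|Z(t)|^2\,dt\Big)^{p/2}\Big]\leq C^{(2)}_p\,\mathbb{E}\Big[|\phi(X(T))|^p+\Big(\int_0^T|g(t,X(t),0,0)|\,dt\Big)^p\Big].
\]

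Finally, using the elementary bounds $|\phi(X(T))|^p\leq 2^{p-1}(|\phi(0)|^p+L_1^p|X(T)|^p)$ and $|g(t,X(t),0,0)|\leq |g(t,0,0,0)|+L_1|X(t)|$, and feeding the SDE estimate for $\sup_t|X(t)|^p$ into the right-hand side, I would add the two estimates to obtain the combined bound with a single constant $C_p=C_p(L_1,p,T)$. There is no serious obstacle: the proof is essentially a bookkeeping exercise checking that every constant depends only on $(L_1,p,T)$ and that the Lipschitz/integrability hypotheses transport correctly from $(y,z)$-data through $X$ into the BSDE data. The only mildly delicate point is confirming that $\phi$ is Lipschitz with constant $L_1$ (inherited from the standing conventions of the paper), so that $\phi(X(T))\in L^p$ follows from $X(T)\in L^p$ and $\phi(0)\in L^p_{\mathcal{F}_T}$.
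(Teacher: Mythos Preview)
Your approach is correct and matches the paper's treatment exactly: the paper presents this lemma without a detailed proof, stating only that it ``is a combination of Theorem 3.17 and Theorem 5.17 in \cite{Pardoux-book}.'' Your decoupling strategy---first solving the forward SDE via Theorem 3.17, then feeding $X(\cdot)$ into the BSDE and applying Theorem 5.17, and finally chaining the two estimates---is precisely what this remark entails.
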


\subsection{An estimate of $Z$ for some BSDEs}

Consider the following BSDE\ %

\begin{equation}
Y(t)=\xi+\int_{t}^{T}f(s,Y(s),Z(s))ds-\int_{t}^{T}Z(s)dB(s).
\label{appen-eq-bsde}%
\end{equation}

\begin{theorem}
\label{q-exp-th} Suppose that $(Y(\cdot),Z(\cdot))\in L_{\mathcal{F}}^{\infty
}(0,T;\mathbb{R})\times L_{\mathcal{F}}^{2,2}([0,T];\mathbb{R}^{d})$ solves
BSDE (\ref{appen-eq-bsde}), and \newline$\left\vert f(s,Y(s),Z(s))\right\vert
\leq C_{1}\left(  1+|Z(s)|^{2}\right)  $, where $C_{1}$ is a constant. Then
there exists a $\delta>0$ such that for each $\lambda_{1}<\delta$,
\begin{equation}
\mathbb{E}\left[  \left.  \exp\left(  \lambda_{1}\int_{t}^{T}|Z(s)|^{2}%
ds\right)  \right\vert \mathcal{F}_{t}\right]  \leq C\text{ and }%
\mathbb{E}\left[  \sup\limits_{0\leq t\leq T}\exp\left(  \lambda_{1}\int%
_{0}^{t}Z(s)dB(s)\right)  \right]  \leq C\text{,} \label{appwn-eq-11}%
\end{equation}
where $C$ depends on $C_{1}$, $\delta$, $T$ and $||\xi||_{\infty}$. Moreover,
for each $\lambda_{2}>0$,
\begin{equation}
\mathbb{E}\left[  \exp\left(  \lambda_{2}\int_{0}^{T}|Z(s)|ds\right)  \right]
<\infty. \label{appwn-eq-12}%
\end{equation}

\end{theorem}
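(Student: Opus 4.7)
The plan is to first show that the stochastic integral $M_t := \int_0^t Z(s)\,dB(s)$ is a BMO-martingale, and then to extract all three exponential-moment bounds from this property. \textbf{Step 1 (BMO property).} Apply It\^o's formula to $e^{\alpha Y(s)}$ for a constant $\alpha > 2C_1$. Using $|f(s,Y,Z)|\le C_1(1+|Z|^2)$, the resulting drift is bounded below by $[-\alpha C_1 + (\tfrac{\alpha^2}{2}-\alpha C_1)|Z|^2]\,e^{\alpha Y}$. Integrating on $[t,T]$, taking conditional expectation (the stochastic integral $\int \alpha e^{\alpha Y} Z\,dB$ is a genuine martingale by localization, since $e^{\alpha Y}$ is bounded and $Z\in L_{\mathcal{F}}^{2,2}$), and using $\|Y\|_\infty,\|\xi\|_\infty<\infty$ gives
\[
\mathbb{E}\!\left[\int_t^T |Z(s)|^2\,ds\,\Big|\,\mathcal{F}_t\right]\le K,
\]
with $K$ depending on $C_1$, $T$, $\|\xi\|_\infty$, $\|Y\|_\infty$. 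The same bound with $t$ replaced by any stopping time $\tau$ exhibits $M$ as a BMO-martingale.

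\textbf{Step 2 (exponential bound on $\int|Z|^2\,ds$).} An induction on $n$ using Fubini and the tower property yields $\mathbb{E}[(\int_t^T|Z|^2\,ds)^n\mid\mathcal{F}_t]\le n!\,K^n$; indeed,
\[
\mathbb{E}\!\left[\Big(\textstyle\int_t^T|Z|^2\,ds\Big)^{\!n}\,\Big|\,\mathcal{F}_t\right] \;=\; n\,\mathbb{E}\!\left[\int_t^T|Z(s)|^2\,\mathbb{E}\!\left[\Big(\textstyle\int_s^T|Z|^2\,du\Big)^{\!n-1}\,\Big|\,\mathcal{F}_s\right]\,ds\,\Big|\,\mathcal{F}_t\right]\;\le\; n!\,K^n.
\]
Summing the Taylor expansion of the exponential, for any $\lambda_1<\delta:=1/K$ one obtains $\mathbb{E}[\exp(\lambda_1\int_t^T|Z|^2\,ds)\mid\mathcal{F}_t] \le (1-\lambda_1 K)^{-1}$, which is the first half of (\ref{appwn-eq-11}). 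For the supremum estimate, $\exp(\lambda_1 M_t)$ is a positive submartingale, so Doob's $L^2$-inequality gives $\mathbb{E}[\sup_t\exp(\lambda_1 M_t)]\le 2\,\mathbb{E}[\exp(2\lambda_1 M_T)]^{1/2}$. Writing $\exp(2\lambda_1 M_T) = \exp(2\lambda_1 M_T - 2\lambda_1^2\langle M\rangle_T)\cdot\exp(2\lambda_1^2\langle M\rangle_T)$, applying Cauchy-Schwarz, and invoking Kazamaki's criterion so that the Dol\'eans-Dade exponential $\exp(c\lambda_1 M_t - \tfrac{c^2\lambda_1^2}{2}\langle M\rangle_t)$ is a uniformly integrable martingale for small $\lambda_1$ (using the BMO bound from Step 1), reduces the task to bounding $\mathbb{E}[\exp(c'\lambda_1^2\langle M\rangle_T)]$, finite by the Taylor-series estimate above once $\lambda_1$ is taken small enough.

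\textbf{Step 3 (estimate (\ref{appwn-eq-12})).} By Young's inequality, $\lambda_2\int_0^T|Z(s)|\,ds \le \tfrac{\lambda_2^2 T}{4\epsilon} + \epsilon\int_0^T|Z(s)|^2\,ds$ for any $\epsilon>0$. Choosing $\epsilon<\delta$ and applying Step 2 with $t=0$ and $\lambda_1=\epsilon$ immediately yields finiteness of $\mathbb{E}[\exp(\lambda_2\int_0^T|Z|\,ds)]$ for every $\lambda_2>0$. The main obstacle is Step 2's supremum estimate: constants must be tracked carefully so that every exponential martingale in the argument is a genuine martingale and the BMO norm of $M$ stays within the range permitted by Kazamaki's theorem, which may force $\delta$ to be taken strictly smaller than $1/K$.
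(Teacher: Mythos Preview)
Your proof is correct and follows the same overall architecture as the paper's: establish that $\int_0^\cdot Z\,dB$ is a BMO martingale, then harvest the exponential-moment bounds. The differences are in the implementation. For Step~1 the paper applies It\^o's formula to $u(|Y|)$ with $u(x)=\tfrac{1}{4C_1^2}(e^{2C_1 x}-1-2C_1 x)$, which is chosen so that $C_1 u'(|y|)-\tfrac12 u''(|y|)=-\tfrac12$ exactly; your choice $e^{\alpha Y}$ with $\alpha>2C_1$ achieves the same BMO bound and is equally valid. For Step~2 the paper simply cites the Nirenberg inequality for BMO martingales (Theorem~10.43 in \cite{HWY}), whereas you reprove the $n!\,K^n$ moment bound by hand---more self-contained, and essentially the standard proof of John--Nirenberg. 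The most notable difference is Step~3: the paper uses a time-slicing argument, partitioning $[0,T]$ into subintervals of length $\delta_0$ with $\lambda_2\sqrt{\delta_0}<\delta$, applying Cauchy--Schwarz on each piece, and iterating via the tower property; your Young-inequality argument $\lambda_2|Z|\le \tfrac{\lambda_2^2}{4\epsilon}+\epsilon|Z|^2$ is a cleaner one-line reduction to the already-established bound on $\exp(\epsilon\int_0^T|Z|^2)$. One small remark: in your displayed induction identity the equality is not literal (you need an intermediate tower step to pull the inner conditional expectation out), but the resulting inequality $\le n!\,K^n$ is of course correct.
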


\begin{proof}
In the following, $C$ is a constant, and will be changed from line to line.
Define
\[
u(x)=\frac{1}{4C_{1}^{2}}\left(  e^{2C_{1}x}-1-2C_{1}x\right)  .
\]
It is easy to check that $x\rightarrow u(|x|)$ is $C^{2}$. Applying It\^{o}'s
formula to $u(|Y(s)|)$, we get
\[%
\begin{array}
[c]{lll}%
u\left(  \left\vert Y(t)\right\vert \right)  & = & u\left(  \left\vert
Y(T)\right\vert \right)  +\int_{t}^{T}\{u^{^{\prime}}%
(|Y(s)|)sgn(Y(s))f(s,Y(s),Z(s))-\frac{1}{2}u^{^{\prime\prime}}%
(|Y(s)|)|Z(s)|^{2}\}ds\\
&  & -\int_{t}^{T}u^{^{\prime}}(|Y(s)|)sgn(Y(s))Z(s)dB(s)\\
& \leq & u\left(  \left\vert Y(T)\right\vert \right)  +\int_{t}^{T}%
\{u^{^{\prime}}(|Y(s)|)C_{1}(1+|Z(s)|^{2})-\frac{1}{2}u^{^{\prime\prime}%
}(|Y(s)|)|Z(s)|^{2}\}ds\\
&  & -\int_{t}^{T}u^{^{\prime}}(|Y(s)|)sgn(Y(s))Z(s)dB(s)\\
& \leq & C-\frac{1}{2}\int_{t}^{T}|Z(s)|^{2}ds-\int_{t}^{T}u^{^{\prime}%
}(|Y(s)|)sgn(Y(s))Z(s)dB(s).
\end{array}
\]
From the above inequality, we can deduce that, for each stopping time
$\tau\leq T$,%
\[
\mathbb{E}\left[  \left.  \int_{\tau}^{T}|Z\left(  s\right)  |^{2}%
ds\right\vert \mathcal{F}_{\tau}\right]  \leq C,
\]
where $C$ is independent of $\tau$. Thus $(\int_{0}^{t}Z\left(  s\right)
dB(s))_{t\in\lbrack0,T]}$ is a BMO martingale. By the Nirenberg inequality
(see Theorem 10.43 in \cite{HWY}), we obtain (\ref{appwn-eq-11}). For each
given $\lambda_{2}>0$, choose $\delta_{0}>0$ such that $\lambda_{2}%
\sqrt{\delta_{0}}<\delta$. Thus, by (\ref{appwn-eq-11}), we get
\[%
\begin{array}
[c]{l}%
\mathbb{E}\left[  \exp\left(  \lambda_{2}\int_{0}^{T}|Z(s)|ds\right)  \right]
\\
=\mathbb{E}\left[  \exp\left(  \lambda_{2}\int_{0}^{T-\delta_{0}%
}|Z(s)|ds\right)  \exp\left(  \lambda_{2}\int_{T-\delta_{0}}^{T}%
|Z(s)|ds\right)  \right] \\
=\mathbb{E}\left[  \exp\left(  \lambda_{2}\int_{0}^{T-\delta_{0}%
}|Z(s)|ds\right)  \mathbb{E}\left[  \left.  \exp\left(  \lambda_{2}%
\int_{T-\delta_{0}}^{T}|Z(s)|ds\right)  \right\vert \mathcal{F}_{T-\delta_{0}%
}\right]  \right] \\
\leq\mathbb{E}\left[  \exp\left(  \lambda_{2}\int_{0}^{T-\delta_{0}%
}|Z(s)|ds\right)  \mathbb{E}\left[  \left.  \exp\left(  \lambda_{2}%
\sqrt{\delta_{0}}\left[  \int_{T-\delta_{0}}^{T}|Z(s)|^{2}ds\right]
^{\frac{1}{2}}\right)  \right\vert \mathcal{F}_{T-\delta_{0}}\right]  \right]
\\
\leq\mathbb{E}\left[  \exp\left(  \lambda_{2}\int_{0}^{T-\delta_{0}%
}|Z(s)|ds\right)  \mathbb{E}\left[  \left.  e^{\lambda_{2}\sqrt{\delta_{0}}%
}+\exp\left(  \lambda_{2}\sqrt{\delta_{0}}\int_{T-\delta_{0}}^{T}%
|Z(s)|^{2}ds\right)  I_{\{\int_{T-\delta_{0}}^{T}|Z(s)|^{2}ds>1\}}\right\vert
\mathcal{F}_{T-\delta_{0}}\right]  \right] \\
\leq\left(  e^{\delta}+C\right)  \mathbb{E}\left[  \exp\left(  \lambda_{2}%
\int_{0}^{T-\delta_{0}}|Z(s)|ds\right)  \right] \\
\leq\left(  e^{\delta}+C\right)  ^{[\frac{T}{\delta_{0}}]+1}<\infty.
\end{array}
\]
This completes the proof.
\end{proof}

\subsection{Solution to linear FBSDEs}

\label{sect-solu-linearfbsde}

Considering the following forward-backward stochastic differential equation
\begin{equation}
\left\{
\begin{array}
[c]{rl}%
dX(t)= & \left[  \alpha_{1}(t)X(t)+\beta_{1}(t)Y(t)+\gamma_{1}(t)Z(t)+L_{1}%
(t)\right]  dt+\left[  \alpha_{2}(t)X(t)+\beta_{2}(t)Y(t)+\gamma
_{2}(t)Z(t)+L_{2}(t)\right]  dB(t),\\
dY(t)= & -\left[  \alpha_{3}(t)X(t)+\beta_{3}(t)Y(t)+\gamma_{3}(t)Z(t)+L_{3}%
(t)\right]  dt+Z(t)dB(t),\\
X(0)= & x_{0},\ Y(T)=\kappa X(T),
\end{array}
\right.  \label{appen-eq-xyz}%
\end{equation}
where $\alpha_{i}(\cdot)$, $\beta_{i}(\cdot)$, $\gamma_{i}(\cdot)$, $i=1,2,3$,
are bounded adapted processes, $L_{1}(\cdot)$, $L_{3}(\cdot)\in L_{\mathcal{F}%
}^{1,2}([0,T];\mathbb{R})$, $L_{2}(\cdot)\in L_{\mathcal{F}}^{2,2}%
([0,T];\mathbb{R})$ and $\kappa$ is an $\mathcal{F}_{T}$-measurable bounded
random variable. Suppose that the solution to (\ref{appen-eq-xyz}) has the
following relationship
\[
Y(t)=p(t)X(t)+\varphi(t),
\]
where $p(t)$, $\varphi(t)$ satisfy
\begin{equation}
\left\{
\begin{array}
[c]{rl}%
dp(t)= & -A(t)dt+q(t)dB(t),\\
d\varphi(t)= & -C(t)dt+\nu(t)dB(t),\\
p(T)= & \kappa,\ \varphi(T)=0,
\end{array}
\right.  \label{appen-eq-pq}%
\end{equation}
$A(t)$ and $C(t)$ will be determined later. Applying It\^{o}'s formula to
$p(t)X(t)+\varphi(t)$, we have
\begin{equation}%
\begin{array}
[c]{ll}%
d\left(  p(t)X(t)+\varphi(t)\right)  & =\left\{  p(t)\left[  \alpha
_{1}(t)X(t)+\beta_{1}(t)Y(t)+\gamma_{1}(t)Z(t)+L_{1}(t)\right]
-A(t)X(t)\right. \\
& \ \ \left.  +q(t)\left[  \alpha_{2}(t)X(t)+\beta_{2}(t)Y(t)+\gamma
_{2}(t)Z(t)+L_{2}(t)\right]  -C(t)\right\}  dt\\
& \ \ +\left\{  p(t)\left[  \alpha_{2}(t)X(t)+\beta_{2}(t)Y(t)+\gamma
_{2}(t)Z(t)+L_{2}(t)\right]  +q(t)X(t)+\nu(t)\right\}  dB(t).
\end{array}
\end{equation}
Comparing with the equation satisfied by $Y(t)$, one has
\begin{equation}
Z(t)=p(t)\left[  \alpha_{2}(t)X(t)+\beta_{2}(t)Y(t)+\gamma_{2}(t)Z(t)+L_{2}%
(t)\right]  +q(t)X(t)+\nu(t), \label{appen-eq-z}%
\end{equation}%
\begin{equation}%
\begin{array}
[c]{ll}%
-\left[  \alpha_{3}(t)X(t)+\beta_{3}(t)Y(t)+\gamma_{3}(t)Z(t)+L_{3}(t)\right]
& =p(t)\left[  \alpha_{1}(t)X(t)+\beta_{1}(t)Y(t)+\gamma_{1}(t)Z(t)+L_{1}%
(t)\right]  -A(t)X(t)\\
& \ \ +q(t)\left[  \alpha_{2}(t)X(t)+\beta_{2}(t)Y(t)+\gamma_{2}%
(t)Z(t)+L_{2}(t)\right]  -C(t).
\end{array}
\label{appen-relation-generator}%
\end{equation}
From equation \eqref{appen-eq-z}, we have the form of $Z(t)$ as
\begin{align*}
Z(t)  &  =\left(  1-p(t)\gamma_{2}(t)\right)  ^{-1}\left\{  p(t)\left[
\alpha_{2}(t)X(t)+\beta_{2}(t)Y(t)+L_{2}(t)\right]  +q(t)X(t)+\nu(t)\right\}
\\
&  =\left(  1-p(t)\gamma_{2}(t)\right)  ^{-1}\left[  \left(  \alpha
_{2}(t)p(t)+\beta_{2}(t)p(t)^{2}+q(t)\right)  X(t)+p(t)\beta_{2}%
(t)\varphi(t)+p(t)L_{2}(t)+\nu(t)\right]  .
\end{align*}
From the equation \eqref{appen-relation-generator}, and utilizing the form of
$Y(t)$ and $Z(t)$, we derive that%
\begin{equation}%
\begin{array}
[c]{rl}%
A(t)= & \alpha_{3}(t)+\beta_{3}(t)p(t)+\gamma_{3}(t)K_{1}(t)+\alpha
_{1}(t)p(t)+\beta_{1}(t)p^{2}(t)\\
& +\gamma_{1}(t)K_{1}(t)p(t)+\alpha_{2}(t)q(t)+\beta_{2}(t)p(t)q(t)+\gamma
_{2}(t)K_{1}(t)q(t),
\end{array}
\label{new-eq-111}%
\end{equation}
where
\[
K_{1}(t)=(1-p(t)\gamma_{2}(t))^{-1}\left[  \alpha_{2}(t)p(t)+\beta_{2}%
(t)p^{2}(t)+q(t)\right]  ,
\]
and%
\begin{equation}%
\begin{array}
[c]{rl}%
C(t)= & (\beta_{1}(t)p(t)+\beta_{2}(t)q(t)+\beta_{3}(t))\varphi(t)+p(t)L_{1}%
(t)+q(t)L_{2}(t)+L_{3}(t)\\
& +(\gamma_{1}(t)p(t)+\gamma_{2}(t)q(t)+\gamma_{3}(t))(1-p(t)\gamma
_{2}(t))^{-1}(\beta_{2}(t)p(t)\varphi(t)+p(t)L_{2}(t)+\nu(t)).
\end{array}
\label{new-eq-112}%
\end{equation}

\begin{theorem}
Assume \eqref{appen-eq-pq} has a solution $(p(\cdot),q(\cdot))$,
$(\varphi(\cdot),\nu(\cdot))\in L_{\mathcal{F}}^{2}(\Omega;C([0,T],\mathbb{R}%
))\times L_{\mathcal{F}}^{2,2}([0,T];\mathbb{R})$, and $(\tilde{X}%
(\cdot),\tilde{Y}(\cdot),\tilde{Z}(\cdot))\in L_{\mathcal{F}}^{2}%
(\Omega;C([0,T],\mathbb{R}))\times L_{\mathcal{F}}^{2}(\Omega
;C([0,T],\mathbb{R}))\times L_{\mathcal{F}}^{2,2}([0,T];\mathbb{R})$, where
$\tilde{X}(\cdot)$ is the solution to
\begin{equation}
\left\{
\begin{array}
[c]{rl}%
d\tilde{X}(t)= & \left\{  \alpha_{1}(t)\tilde{X}(t)+\beta_{1}(t)p(t)\tilde
{X}(t)+\beta_{1}(t)\varphi(t)+L_{1}(t)+\gamma_{1}(t)\left(  1-p(t)\gamma
_{2}(t)\right)  ^{-1}\right. \\
& \cdot\left[  \left(  \alpha_{2}(t)p(t)+\beta_{2}(t)p(t)^{2}+q(t)\right)
\tilde{X}(t)\right.  \left.  \left.  +p(t)\sigma_{y}(t)\varphi(t)+p(t)L_{2}%
(t)+\nu(t)\right]  \right\}  dt\\
& +\left\{  \alpha_{2}(t)\tilde{X}(t)+\beta_{2}(t)p(t)\tilde{X}(t)+\beta
_{2}(t)\varphi(t)+L_{2}(t)+\gamma_{2}(t)\left(  1-p(t)\gamma_{2}(t)\right)
^{-1}\right. \\
& \cdot\left[  \left(  \alpha_{2}(t)p(t)+\beta_{2}(t)p(t)^{2}+q(t)\right)
\tilde{X}(t)\right.  \left.  \left.  +p(t)\sigma_{y}(t)\varphi(t)+p(t)L_{2}%
(t)+\nu(t)\right]  \right\}  dB(t),\\
\tilde{X}(0)= & x_{0},
\end{array}
\right.
\end{equation}
and%
\begin{equation}%
\begin{array}
[c]{rl}%
\tilde{Y}(t)= & p(t)\tilde{X}(t)+\varphi(t),\\
\tilde{Z}(t)= & \left(  1-p(t)\gamma_{2}(t)\right)  ^{-1}\left[  \left(
\alpha_{2}(t)p(t)+\beta_{2}(t)p(t)^{2}+q(t)\right)  \tilde{X}(t)\right. \\
& \left.  +p(t)\beta_{2}(t)\varphi(t)+p(t)L_{2}(t)+\nu(t)\right]  .
\end{array}
\end{equation}
Then $(\tilde{X}(\cdot),\tilde{Y}(\cdot),\tilde{Z}(\cdot))$ solves
\eqref{appen-eq-xyz}.\ Moreover, if
\[
p(t)L_{1}(t)+q(t)L_{2}(t)+L_{3}(t)+(\gamma_{1}(t)p(t)+\gamma_{2}%
(t)q(t)+\gamma_{3}(t))(1-p(t)\gamma_{2}(t))^{-1}p(t)L_{2}(t)=0,
\]
then $(\varphi(\cdot),\nu(\cdot))=(0,0)$ is a solution to \eqref{appen-eq-pq} and

$(\tilde{X}(t),p(t)\tilde{X}(t),\left(  1-p(t)\gamma_{2}(t)\right)
^{-1}\left[  \left(  \alpha_{2}(t)p(t)+\beta_{2}(t)p(t)^{2}+q(t)\right)
\tilde{X}(t)+p(t)L_{2}(t)\right]  )_{t\in\lbrack0,T]}$ solves
\eqref{appen-eq-xyz}. \label{appen-th-linear-fbsde}
\end{theorem}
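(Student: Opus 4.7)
The plan is to verify directly that the candidate triple $(\tilde{X}(\cdot),\tilde{Y}(\cdot),\tilde{Z}(\cdot))$ satisfies \eqref{appen-eq-xyz}, essentially by reading the heuristic derivation preceding the theorem in reverse. The terminal and initial conditions come for free from $\tilde{X}(0)=x_{0}$, $p(T)=\kappa$ and $\varphi(T)=0$, since $\tilde{Y}(T)=p(T)\tilde{X}(T)+\varphi(T)=\kappa\tilde{X}(T)$. So the real task is to check the drift/diffusion identities for $\tilde{Y}$ and the self-consistency of the formula for $\tilde{Z}$.

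First I would note that $\tilde{X}$ is well defined as the unique solution of a linear SDE with bounded adapted coefficients (the boundedness of $\alpha_{i},\beta_{i},\gamma_{i}$, together with the invertibility of $1-p(t)\gamma_{2}(t)$ coming from the standing assumption that $(\tilde{X},\tilde{Y},\tilde{Z})$ lies in the prescribed $L^{2}$ spaces). Next, I would apply It\^{o}'s formula to the product $p(t)\tilde{X}(t)$ using the dynamics of $p$ (drift $-A(t)$, diffusion $q(t)$) together with the $\tilde{X}$-equation, and then add the dynamics of $\varphi$ (drift $-C(t)$, diffusion $\nu(t)$). Collecting the diffusion coefficient of $d(p(t)\tilde{X}(t)+\varphi(t))$ yields exactly the algebraic identity \eqref{appen-eq-z} written with $\tilde{Z}$ in place of $Z$; solving it for $\tilde{Z}$ by multiplying through by $(1-p(t)\gamma_{2}(t))^{-1}$ recovers the explicit formula for $\tilde{Z}$ given in the statement. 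Hence the diffusion part matches.

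For the drift, I would substitute the definitions \eqref{new-eq-111}--\eqref{new-eq-112} of $A(t)$ and $C(t)$ into the It\^{o} expansion of $p(t)\tilde{X}(t)+\varphi(t)$ and, after using $\tilde{Y}=p\tilde{X}+\varphi$ and the already-derived formula for $\tilde{Z}$, check term by term that the drift collapses to $-[\alpha_{3}(t)\tilde{X}(t)+\beta_{3}(t)\tilde{Y}(t)+\gamma_{3}(t)\tilde{Z}(t)+L_{3}(t)]$, which is precisely the content of \eqref{appen-relation-generator}. This confirms that $(\tilde{X},\tilde{Y},\tilde{Z})$ solves \eqref{appen-eq-xyz}; the $L^{2}$ hypotheses on $p,q,\varphi,\nu,\tilde{X},\tilde{Y},\tilde{Z}$ guarantee that all stochastic integrals appearing in the It\^{o} expansion are genuine martingales, so no delicate truncation argument is needed.

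For the ``moreover'' clause, I would simply plug $(\varphi,\nu)\equiv(0,0)$ into \eqref{new-eq-112}: the $\varphi$- and $\nu$-dependent terms vanish, leaving
\[
C(t)=p(t)L_{1}(t)+q(t)L_{2}(t)+L_{3}(t)+\bigl(\gamma_{1}(t)p(t)+\gamma_{2}(t)q(t)+\gamma_{3}(t)\bigr)\bigl(1-p(t)\gamma_{2}(t)\bigr)^{-1}p(t)L_{2}(t),
\]
which is zero by assumption. Thus $(0,0)$ is a solution of the linear BSDE in \eqref{appen-eq-pq} for $(\varphi,\nu)$, and setting $\varphi=\nu=0$ in the formulas for $\tilde{X},\tilde{Y},\tilde{Z}$ gives the claimed explicit triple. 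The only subtle bookkeeping point, and the one I expect to require the most care, is ensuring the invertibility and locally-bounded character of $1-p(t)\gamma_{2}(t)$ throughout the computation so that $\tilde{Z}$ is genuinely square-integrable; this is built into the standing $L^{2}$-space hypothesis on $(\tilde{X},\tilde{Y},\tilde{Z})$ and poses no additional difficulty.
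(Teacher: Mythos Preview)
Your proposal is correct and follows exactly the paper's approach: the paper's proof is the single line ``The results follow by applying It\^{o}'s formula,'' and what you have written is precisely that computation spelled out in detail, reversing the heuristic derivation preceding the statement.
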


\begin{proof}
The results follow by applying It\^{o}'s formula.
\end{proof}

Now we study the uniqueness of $(p(\cdot),q(\cdot))$ in (\ref{appen-eq-pq}).
It is important to note that the form of $(p(\cdot),q(\cdot))$ in
(\ref{appen-eq-pq}) does not depend on $x_{0}$, $L_{1}$, $L_{2}$, $L_{3}$. So
we set $\ x_{0}=1$, $L_{1}=L_{2}=L_{3}=0$ in the followings. In this case
$(\varphi(\cdot),\nu(\cdot))=(0,0)$ as in the above theorem.

\begin{theorem}
\label{unique-pq} Assume (\ref{appen-eq-xyz}) has a unique solution
$(X(\cdot),Y(\cdot),Z(\cdot))\in L_{\mathcal{F}}^{2}(\Omega;C([0,T],\mathbb{R}%
))\times L_{\mathcal{F}}^{2}(\Omega;C([0,T],\mathbb{R}))\times L_{\mathcal{F}%
}^{2,2}([0,T];\mathbb{R})$.
\end{theorem}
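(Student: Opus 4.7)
The plan is to argue by contradiction using Theorem \ref{appen-th-linear-fbsde} as a bridge between two candidate solutions of the adjoint BSDE and the presumed unique solution of the linear FBSDE \eqref{appen-eq-xyz}. As noted in the paragraph preceding the theorem, the data $(p,q)$ solving \eqref{appen-eq-pq} do not depend on $x_0,L_1,L_2,L_3$, so I normalise to $x_0=1$ and $L_1=L_2=L_3=0$, in which case $(\varphi,\nu)=(0,0)$ is the admissible choice and the formula for $A(t)$ reduces to \eqref{new-eq-111}.

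First I would suppose that $(p_1,q_1)$ and $(p_2,q_2)$ are both solutions to \eqref{appen-eq-pq} in the class produced by Theorem \ref{exist-unique-BSDE}, i.e.\ $p_i$ uniformly bounded and $q_i\in L_{\mathcal F}^{2,\beta}([0,T];\mathbb R)$ for every $\beta\ge 1$. For each $i=1,2$, let $\tilde X_i$ be the unique strong solution of the linear SDE in Theorem \ref{appen-th-linear-fbsde} with $\tilde X_i(0)=1$ (this SDE is built from $(p_i,q_i)$ alone). Applying Theorem \ref{appen-th-linear-fbsde}, the triple
\[
\bigl(\tilde X_i(\cdot),\ p_i(\cdot)\tilde X_i(\cdot),\ (1-p_i\gamma_2)^{-1}(\alpha_2 p_i+\beta_2 p_i^{2}+q_i)\tilde X_i(\cdot)\bigr)
\]
is a solution of the very same FBSDE \eqref{appen-eq-xyz} (with $x_0=1$, $L_j\equiv 0$, $\kappa=p(T)$). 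Since the hypothesis of Theorem \ref{unique-pq} is precisely that this FBSDE has a unique $L^{2}$-solution, the two triples must coincide: $\tilde X_1=\tilde X_2=:\tilde X$ and $p_1\tilde X=p_2\tilde X$, so $(p_1-p_2)\tilde X=0$.

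The key remaining step is to show $\tilde X(t)\neq 0$ a.s.\ for every $t\in[0,T]$ so that one can divide. With $L_i\equiv 0$ and $\varphi\equiv\nu\equiv 0$, the SDE in Theorem \ref{appen-th-linear-fbsde} has the form $d\tilde X=\mu\tilde X\,dt+\sigma\tilde X\,dB$, where
\[
\mu=\alpha_1+\beta_1 p_1+\gamma_1 K_1,\qquad \sigma=\alpha_2+\beta_2 p_1+\gamma_2 K_1,
\]
and $K_1=(1-p_1\gamma_2)^{-1}(\alpha_2 p_1+\beta_2 p_1^{2}+q_1)$. Boundedness of $p_1$ and the bound $[s(0)\vee(-l(0))]L_3\le 1-\beta_0$ from Assumption \ref{assum-3} ensure $(1-p_1\gamma_2)^{-1}$ is well-defined and bounded, while $q_1\in L_{\mathcal F}^{2,\beta}$ gives $\int_0^T(|\mu|+\sigma^{2})ds<\infty$ a.s. Hence the Doléans-Dade formula yields
\[
\tilde X(t)=\exp\!\left(\int_0^t\!\bigl(\mu-\tfrac12\sigma^{2}\bigr)ds+\int_0^t\!\sigma\,dB\right)>0\ \ \text{a.s.}
\]
Dividing by $\tilde X$ gives $p_1(t)=p_2(t)$ a.s.\ for every $t$; comparing the martingale parts of \eqref{appen-eq-pq} then forces $q_1=q_2$ in $L_{\mathcal F}^{2,2}$, completing the uniqueness.

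The main obstacle I anticipate is the verification that Theorem \ref{appen-th-linear-fbsde} actually applies to each $(p_i,q_i)$, i.e.\ that the constructed $\tilde X_i$ lies in $L_{\mathcal F}^{2}(\Omega;C([0,T];\mathbb R))$ and that the associated $\tilde Z_i$ is in $L_{\mathcal F}^{2,2}$; this integrability is non-trivial because $q_i$ is only in $L^{2,\beta}$ rather than bounded, but it can be obtained from the BMO-type estimates such as those in Theorem \ref{q-exp-th} together with the boundedness of $p_i$. Once this is secured, the rest is the clean chain of implications above.
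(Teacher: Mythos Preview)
Your proposal is correct and follows essentially the same route as the paper: normalise to $x_0=1$, $L_j\equiv 0$ so that $(\varphi,\nu)=(0,0)$; for each candidate $(p_i,q_i)$ build the geometric SDE solution $\tilde X_i$, invoke Theorem~\ref{appen-th-linear-fbsde} to produce two solutions of \eqref{appen-eq-xyz}, use the assumed uniqueness to identify them, and exploit strict positivity of the Dol\'eans--Dade exponential to divide and recover $p_1=p_2$, hence $q_1=q_2$. The integrability obstacle you flag is exactly the one the paper addresses via Theorem~\ref{q-exp-th}: the smallness of $\gamma_2$ (case~(i)) ensures that the exponent in the explicit formula for $\tilde X_i$ has a small enough multiple of $\int_0^T|q_i|^2\,ds$ for the exponential-moment bound \eqref{appwn-eq-11} to yield $\mathbb{E}[\sup_t|\tilde X_i|^4]<\infty$, which in turn gives $K_{i,1}\tilde X_i\in L_{\mathcal F}^{2,2}$; in case~(ii) the boundedness of $q_i$ makes this immediate.
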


\begin{description}
\item[(i)] If $\gamma_{2}(\cdot)$ is small enough and $(p_{i}(\cdot
),q_{i}(\cdot))\in L_{\mathcal{F}}^{\infty}(0,T;\mathbb{R})\times
L_{\mathcal{F}}^{2,4}([0,T];\mathbb{R})$, $i=1,2$, are two solutions to
(\ref{appen-eq-pq}), then $(p_{1}(\cdot),q_{1}(\cdot))=(p_{2}(\cdot
),q_{2}(\cdot));$

\item[(ii)] If $(p_{i}(\cdot),q_{i}(\cdot))\in L_{\mathcal{F}}^{\infty
}(0,T;\mathbb{R})\times L_{\mathcal{F}}^{\infty}(0,T;\mathbb{R})$, $i=1,2$,
are two solutions to (\ref{appen-eq-pq}), then $(p_{1}(\cdot),q_{1}%
(\cdot))=(p_{2}(\cdot),q_{2}(\cdot))$.
\end{description}

\begin{proof}
We only prove (i), (ii) is similar. Consider the following SDEs:%
\begin{equation}
\left\{
\begin{array}
[c]{ll}%
d\tilde{X}_{i}(t)= & \left[  \alpha_{1}(t)+\beta_{1}(t)p_{i}(t)+\gamma
_{1}(t)K_{i,1}(t)\right]  \tilde{X}_{i}(t)dt\\
& +\left[  \alpha_{2}(t)+\beta_{2}(t)p_{i}(t)+\gamma_{2}(t)K_{i,1}(t)\right]
\tilde{X}_{i}(t)dB(t),\\
\tilde{X}_{i}(0)= & 1,\text{ }i=1,2.
\end{array}
\right.
\end{equation}
Then $\tilde{X}_{i}(\cdot)$ has a explicit form%
\[
\tilde{X}_{i}(t)=\exp\left\{  \int_{0}^{t}\left(  N_{i,1}(s)-\frac{1}%
{2}(N_{i,2}(s))^{2}\right)  ds+\int_{0}^{t}N_{i,2}(s)dB(s)\right\}  ,
\]
where%
\[%
\begin{array}
[c]{rl}%
K_{i,1}(s) & =(1-p_{i}(s)\gamma_{2}(s))^{-1}\left[  \alpha_{2}(s)p_{i}%
(s)+\beta_{2}(s)p_{i}^{2}(s)+q_{i}(s)\right]  ,\\
N_{i,1}(s) & =\alpha_{1}(s)+\beta_{1}(s)p_{i}(s)+\gamma_{1}(s)K_{i,1}(s),\\
N_{i,2}(s) & =\alpha_{2}(s)+\beta_{2}(s)p_{i}(s)+\gamma_{2}(s)K_{i,1}(s).
\end{array}
\]
By Theorem \ref{q-exp-th}, it is easy to check that when $\gamma_{2}(\cdot)$
is small enough,
\[
\mathbb{E}\left[  \underset{0\leq t\leq T}{\sup}\left\vert \tilde{X}%
_{i}(t)\right\vert ^{4}\right]  <\infty.
\]
Thus we have $\left(  K_{i,1}(t)\tilde{X}_{i}(t)\right)  _{t\in\lbrack0,T]}\in
L_{\mathcal{F}}^{2,2}([0,T];\mathbb{R})$. Since (\ref{appen-eq-xyz}) has a
unique solution, by Theorem \ref{appen-th-linear-fbsde}, we get for
$t\in\lbrack0,T]$, \
\[
(\tilde{X}_{1}(t),p_{1}(t)\tilde{X}_{1}(t),K_{1,1}(t)\tilde{X}_{1}%
(t))=(\tilde{X}_{2}(t),p_{2}(t)\tilde{X}_{2}(t),K_{2,1}(t)\tilde{X}_{2}(t)).
\]
Note that $\tilde{X}_{1}(\cdot)>0$, then $(p_{1}(\cdot),q_{1}(\cdot
))=(p_{2}(\cdot),q_{2}(\cdot))$.
\end{proof}

\section*{Acknowledgement}

We are highly grateful to Dr. Falei Wang for his helpful suggestions and comments.

\end{document}